\newtheorem{thm}{Theorem}[section]
\newtheorem{cor}[thm]{Corollary}
\newtheorem{lem}[thm]{Lemma}
\newtheorem{prp}[thm]{Proposition}
\newtheorem{rem}[thm]{Remark}
\theoremstyle{definition}
\newcommand{\scr}[1]{\mathscr #1}
\definecolor{wco}{rgb}{0.5,0.2,0.3}
\numberwithin{equation}{section}
\newcommand{\ua}{\uparrow}
\title{{\bf Navier-Stokes equation
and forward-backward stochastic differential system in the Besov spaces}}
\author{
{\bf   Xin Chen$^{a)}$, Ana Bela Cruzeiro$^{a),b)}$, Zhongmin Qian$^{c)}$}\\
\footnotesize{$^{a)}$ Grupo de F\'isica Matem\'atica, Universidade de
Lisboa, Av. Prof. Gama Pinto 2, 1649-003  Lisbon, Portugal}\\
\footnotesize{$^{b)}$ Dep. Matem\'atica IST (TUL), Av. Rovisco Pais,  1049-001  Lisbon, Portugal}\\
\footnotesize{$^{c)}$ Mathematical Institute,
University of Oxford
24-29 St Giles', Oxford OX1 3LB, UK}\\
\footnotesize{chenxin\_217@hotmail.com, abcruz@math.ist.utl.pt, Zhongmin.Qian@maths.ox.ac.uk}}
\begin{document}

\allowdisplaybreaks

\def\R{\mathbb R} \def\Z{\mathbb Z} \def\ff{\frac} \def\ss{\sqrt}
\def\dd{\delta} \def\DD{\Delta} \def\vv{\varepsilon} \def\rr{\rho}
\def\<{\langle} \def\>{\rangle} \def\GG{\Gamma} \def\gg{\gamma}
\def\ll{\lambda} \def\LL{\Lambda} \def\nn{\nabla} \def\pp{\partial}
\def\d{\text{\rm{d}}} \def\loc{\text{\rm{loc}}} \def\bb{\beta} \def\aa{\alpha} \def\D{\mathbb D}
\def\E{\mathbb E}
\def\O{\mathbb O}
\newcommand{\Ex}{{\bf E}}
\def\si{\sigma} \def\ess{\text{\rm{ess}}}
\def\beg{\begin} \def\beq{\beg}  \def\F{\scr F}
\def\Ric{\text{\rm{Ric}}} \def\Hess{\text{\rm{Hess}}}\def\B{\scr B}
\def\e{\mathbb E} \def\ua{\underline a} \def\OO{\Omega} \def\b{\mathbf b}
\def\oo{\omega}     \def\tt{\tilde} \def\Ric{\text{\rm{Ric}}}
\def\cut{\text{\rm{cut}}}
\def\fff{f(x_1)\dots f(x_n)} \def\ifm{I_m(g^{\bigotimes m})} \def\ee{\varepsilon}
\def\C{\text{curl}}
\def\B{\scr B}
\def\M{\tilde{M}}\def\ll{\lambda}
\def\X{\scr X}
\def\T{\mathbb T}
\def\A{\scr A}
\def\LL{\scr L}
\def\gap{\mathbf{gap}}
\def\div{\text{\rm div}}
\def\dist{\text{\rm dist}}
\def\cut{\text{\rm cut}}
\def\supp{\text{\rm supp}}
\def\Var{\text{\rm Var}}
\def\p{\mathbf{p}}
\def\Cov{\text{\rm Cov}}
\def\Cut{\text{\rm Cut}}
\def\le{\leqslant}
\def\I{\scr I}
\def\coth{\text{\rm coth}}
\def\Dom{\text{\rm Dom}}
\def\Cap{\text{\rm Cap}}
\def\Ent{\text{\rm Ent}}
\def\sect{\text{\rm sect}}\def\H{\mathbb H}
\def\g{\tilde {g}}
\def\u{\tilde{u}}
\def\c{\tilde{\theta}}\def\w{\tilde{\omega}}
\def\om{\tilde{\Omega}}\def\v{\varepsilon}
\def\U{\tilde{U}}
\def\a{\tilde \alpha}\def\b{\tilde \beta}
\def\p{\tilde \rho}\def\s{\tilde \sigma}
\def\S{\scr S}

\maketitle

\rm

\vskip 10mm

\begin{abstract}
The Navier-Stokes equation on $\R^d$ ($d \geqslant 3$) formulated on  Besov spaces is considered. Using
a stochastic forward-backward differential system,  the local existence of a unique solution in
$B_{p,p}^{r}$, with $r>1+\frac{d}{p}$ is obtained.
We also show the convergence to solution of the Euler equation when the
viscosity tends to zero. Moreover, we prove the
local existence of a unique solution in
$B_{p,q}^{r}$, with $p>1$, $1\le q < \infty$, $r>\max(1,\frac{d}{p})$; here the maximal time interval
depends on the viscosity $\nu$.
\end{abstract}

\vskip 3mm

\bf Contents
\rm

1.  Introduction

2.  Notations and preliminary results

3.  The local existence theorem in  $B_{p,p}^{r}(\R^d;\R^d)$

4.  The limit to the Euler Equation as $\nu \rightarrow 0$

5.  The local existence theorem in $B_{p,q}^{r}(\R^d;\R^d)$

\section{Introduction}

The motion and evolution of the velocity field in an incompressible fluid
can be described by the following Navier-Stokes equation in $[0,T]\times \R^d$ ($d\geqslant 2$),
\begin{equation}\label{e1}
\begin{cases}
&\frac{\partial u}{\partial t}+u\cdot \nabla u=\nu \Delta u-\nabla p,\\
&\nabla \cdot u=0,\ \ \ u(0)=u_0, \ \ \ t\in[0,T],
\end{cases}
\end{equation}
where  $u:
[0,T]\times \R^d \rightarrow \R^d$ represents the velocity field and
$\nabla \cdot u$ denotes its divergence,
$p:
[0,T]\times \R^d \rightarrow \R$ denotes the pressure, and
$\nu>0$ is the viscosity.
In particular $p$ is a function satisfying the following equation:
\begin{equation}\label{e2}
\Delta p(t,x)=-\sum_{i,j=1}^d\partial_i u^j(t,x)\partial_j u^i(t,x),\ \ \forall \ t\in[0,T],
\end{equation}
where for the vector filed $u=(u^1,\cdots,u^d)$,
$\partial_i u^j$, $1\le i,j \le d$ denotes the partial derivative with respect to the $i$-th variable for the
$j$-th component of $u$.

The Navier-Stokes equation (\ref{e1}) is and has been for a long time  the subject of many works. We refer for example to the
books \cite{CF}, \cite{MB}, \cite{Te} and the references therein.
More recently the Navier-Stokes equation  has been studied  using stochastic methods.
In \cite{LS} Y. Le Jan and A. S. Sznitman used a branching process in Fourier space to show the local existence and uniqueness of
solutions on $\R^d$. In \cite{CI}, P. Constantin and G. Iyer  obtained a stochastic
representation of  (\ref{e1}) by using the associated stochastic Lagrangian paths; in particular
the solution
of (\ref{e1}) is seen to be equivalent to the solution of a stochastic-functional system. And in \cite{I}, G. Iyer
derived the
 local existence of a unique solution in H\"older spaces on the torus by proving the
corresponding result for the equivalent stochastic-functional system. Moreover,
a backward stochastic Lagrangian path was used by X. C. Zhang in \cite{Z} to give a stochastic representation for
the backward version of (\ref{e1}), and the local existence and uniqueness of the solution in Sobolev space
were  also obtained based on such representation.
In \cite{B}, B. Busnello  proved the existence and uniqueness of the solution on $\R^2$ by analyzing
the corresponding  vorticity equation
and the Biot-Savart law. Based on such formulation, in \cite{BFR}  local existence and  uniqueness
of the solution in H\"older space on $\R^3$ was shown by B. Busnello,
F. Flandoli and M. Romito via a generalized Feynman-Kac formula.
In \cite{AB} S. Albeverio and Y. Belopolskaya obtained the local existence of a unique solution in H\"older  space
on $\R^d$  via a semi-group expression for this solution.
Moreover, the global existence of the solution on the torus has been studied in \cite{I1} and \cite{Z}.
Recently, in \cite{CQ}, A. B. Cruzeiro and Z. M. Qian proved  global existence of a unique solution in Sobolev spaces on the
two dimensional torus using  the vorticity equation and the  associated backward SDE.

On the other hand, in
\cite{CC}, a characterization of the solution for Navier-Stokes equation was derived by
F. Cipriano and A. B. Cruzeiro through
some stochastic variational principle, which was formulated on the group of  volume preserving diffeomorphisms.
We refer to \cite{ACC} for the generalization of this approach to general
Lie groups. In \cite{CS}, A. B. Cruzeiro and E. Shamarova established
an equivalence between the solution of (\ref{e1}) and the solution
of a forward-backward stochastic differential equation on the space of volume preserving
maps.

The purpose of this article is to study
the local existence of a unique solution for Navier-Stokes equation in $\R^d$ with $d\geqslant 3$
in Besov spaces via the forward-backward stochastic differential systems (\ref{e3}) and (\ref{e28a}).
Our methods are partially inspired by those of \cite{CI}, \cite{CS} and \cite{CQ}. More precisely, inspired by
\cite{CS}, we will prove the  local existence of a unique solution for (\ref{e1}) by proving the
corresponding property for
an equivalent forward-backward stochastic differential system.
As in \cite{CI}, we use a stochastic Lagrangian path (forward-equation) which is independent of the viscosity 
$\nu$. 
Inspired by \cite{CQ}, we can also choose a different forward equation in the stochastic functional system.


A certain  linear backward SDE was first introduced by J. M. Bismut in \cite{Bis1}.
In \cite{PP1}, E. Pardoux and S. Peng made the important observation that
there exists a unique solution for a general (non-linear) backward SDE. In
\cite{PP}, the connection between   forward-backward SDEs and
quasi-linear PDEs was established by E. Pardoux and S. Peng, which can be viewed as
a generalization of Feynman-Kac's formula; see also \cite{MY} and the
reference therein for an introduction of the forward-backward SDEs
with more general forms.

There are some results on the existence of solutions in the Besov spaces for (\ref{e1}), most of
them  were proved via analytic methods. For example, on $\R^3$, the existence of a strong solution in
(homogeneous Besov space) $\dot{B}_{p,q}^{\frac{3}{p}-1}$
with $1\le p<\infty$, $1\le q \le \infty$
was shown in \cite{C}, and such result was extended to the case of $p=\infty$ in \cite{BBT}, see also \cite{L}.
For the definition of  strong solutions, we refer to \cite{FK}.
 On the other hand,
in \cite{Ca}, the existence of a unique strong solution in some subspace of $\dot{B}_{p,\infty}^{\frac{3}{p}-1}$ for small initial data on
$\R^3$ for $3<p\le 6$ was obtained.
In \cite{W}, in the following three spaces (or their subspaces): (1) $B_{p,\infty}^r$ with $1\le p \le 2$, $r>1$, $r>\frac{d}{p}-1$;
(2) $B_{2,1}^r$ with $r>1$, $r\geqslant \frac{d}{2}-1$; (3) $B_{2,q}^r$ with
$1<q<\infty$, $r>1$, $r> 1+\frac{d}{2}-\frac{2}{q}$, the
existence of a unique strong solution for small initial data was shown.

In fact, in order to get the results above, the viscosity coefficient
$\nu$ needs to be strictly positive; in our paper we can show the local existence of a unique solution
in $B_{p,p}^{r}$ with $p>1$, $r>1+\frac{d}{p}$, and the maximal time interval is independent of $\nu$, which implies  that
such result can be applied to the Euler equation.
In the proof, we use the
same Lagrangian path (forward equation) as the one in \cite{CI}
(also the same as  in \cite{AB},\cite{BFR},\cite{CS}, \cite{Z}).
We also show a result about the convergence of
the solution as $\nu \rightarrow 0$. More generally, in the spaces $B_{p,q}^{r}$ with $p>1$, $1\le q <\infty$,
$r>\max(1,\frac{d}{p})$ (or in subspaces of these spaces), we also  prove  local existence of a unique solution. Here we adopt a different Lagrangian path from the one in \cite{CS} and, in this case, however,
the  maximal time interval for the solution depends on $\nu$.

During the finalization of this paper we found a recent work \cite{DQT} by
F. Delbaen, J. N. Qiu and S. J. Tang, where a forward-backward stochastic functional system different from
ours was introduced. Moreover, the local existence of a unique solution of
(\ref{e1}) in Sobolev space was derived by studying the corresponding property of such system, and the maximal
time interval depends on the viscosity $\nu$.

This article is organized as follows: in Section 2 we give a brief description of
the framework, and prove some Lemmas that will be needed later; in Section 3 we present
the unique local existence of the solution in  $B_{p,p}^{r}(\R^d;\R^d)$ with $p>1$, $r>1+\frac{d}{p}$
for (\ref{e1}) and in Section 4 we study
the limit behaviour of the Navier-Stokes solution as the viscosity $\nu$ tends to $0$. In Section 5 we prove
the unique local existence of the solution in  $B_{p,q}^{r}(\R^d;\R^d)$ with $p>1$, $1\le q< \infty$,
$r>\max(1,\frac{d}{p})$ for equation (\ref{e1}).

\section{Notations and preliminary results}
Throughout this paper we consider the Navier-Stokes equation in $\R^d$ with $d \geqslant 3$. Let $C_c^{\infty}(\R^d)$ denote
the set of smooth functions on Euclidean space $\R^d$ which have
compact supports and $C_c^{\infty}(\R^d;\R^d)$ denote the set of smooth vector fields in
$\R^d$
with compact supports. Analogously $C_b^{\infty}(\R^d)$ stands for the set of smooth bounded functions.
For a vector field $v=(v^1,\cdots,v^d)$ in $\R^d$, the divergence of $v$ is denoted by $\nabla \cdot v$, and
$\partial_i v^j$, $1\le i,j \le d$ stands for the partial derivative with respect to the $i$-th variable of the $j$-th component of $v$.
If $\mathbb{N}$ is the set of natural numbers, for every non-negative $k \in \mathbb{N}$ and every real number $p>1$, the Sobolev space
$W^{k,p}(\R^d;\R^d)$ of vector fields is the completion of
$C_c^{\infty}(\R^d;\R^d)$ under the following norm,
\begin{equation*}
||v||_{W^{k,p}}:=\sum_{i=0}^k ||\nabla^i v||_{L^p},
\end{equation*}
where $\nabla^i$ is the $i$-th differential of $v$, and $||.||_{L^p}$ denotes the $L^p$ norm (with respect to Lebesgue measure).
The Sobolev space $W^{k,p}(\R^d)$ of functions can be defined in the same way.

As in \cite{S} and \cite[Section 1.2]{T1}, we introduce the Besov space $B_{p,q}^{k+\alpha}(\R^d;\R^d)$ in
$\R^d$, where $k$ is a non-negative integer, $0<\alpha<1$, $1<p<\infty, 1\le q \le \infty$.

If $1\le q <\infty$,
\begin{equation*}
B_{p,q}^{k+\alpha}(\R^d;\R^d):=\Big\{v \in W^{k,p}(\R^d;\R^d),\ \ \
\int_{\R^d}\frac{||\nabla^k v(\cdot+y)-\nabla^k v(\cdot)||_{L^p}^q}{|y|^{d+\alpha q}}dy<\infty\Big\}.
\end{equation*}
If $q=\infty$,
\begin{equation*}
B_{p,\infty}^{k+\alpha}(\R^d;\R^d):=\Big\{v \in W^{k,p}(\R^d;\R^d),\ \ \
\sup_{|y|>0}\Big\{\frac{||\nabla^k v(\cdot+y)-\nabla^k v(\cdot)||_{L^p}}{|y|^{\alpha }}
\Big\}<\infty\Big\}.
\end{equation*}

For each $v \in  B_{p,q}^{k+\alpha}(\R^d;\R^d)$, we define the following quasi-norm
\begin{equation*}
||v||_{B_{p,q}^{k+\alpha}}:=||v||_{W^{k,p}}+[v]_{B_{p,q}^{k+\alpha}},
\end{equation*}
where
\begin{equation}\label{e0}
[v]_{B_{p,q}^{k+\alpha}}:=\Big(\int_{\R^d}\frac{||\nabla^k v(\cdot+y)-\nabla^k v(\cdot)||_{L^p}^q}{|y|^{d+\alpha q}}dy\Big)^{\frac{1}{q}}
\end{equation}
if $1\le q <\infty$,
and
\begin{equation}\label{e0a}
[v]_{B_{p,\infty}^{k+\alpha}}:=\sup_{|y|>0}\Big\{\frac{||\nabla^k v(\cdot+y)-\nabla^k v(\cdot)||_{L^p}}{|y|^{\alpha }}
\Big\}
\end{equation} if $q=\infty$.

To see the various equivalent definitions of $B_{p,q}^{k+\alpha}(\R^d;\R^d)$, (for example via
the Fourier multiplication)
one can refer to \cite[Chapter 1]{T1} or \cite[Section 2.5]{T}. In particular,
$B_{p,p}^{k+\alpha}(\R^d)$ is the fractional order Sobolev space $W^{k+\alpha,p}(\R^d)$, see
\cite[Chapter 1]{T1}.
\begin{rem}
Note that the space $B_{p,p}^{r}(\R^d)$ (or $B_{p,q}^{r}(\R^d)$)  may not coincide with
 $W^{r,p}(\R^d)$ if $r$ is an integer (see \cite{T}). For this reason and unless particularly clarified, we shall consider that $r$ is not an integer in this paper.
    
\end{rem}

Suppose that $u$ is a solution of (\ref{e1}) in the  time interval
$t \in [0,T]$, which is regular enough (for example $u \in C([0,T]; C_b^{\infty}(\R^d,\R^d))$ ).
Fix a Brownian motion $B_t$ on $\R^d$, for $0\le t \le s \le T$ and let
$X^t_s(x)$ be the unique solution of the following SDE,
\begin{equation*}
\begin{cases}
& dX_s^t(x)=\sqrt{2\nu} dB_s-u(T-s,X_s^t(x))ds\\
& X_t^t(x)=x,
\end{cases}
\end{equation*}
We define $Y^t_s(x):=u(T-s,X^t_s(x))$,
$Z^{t}_s(x):=\nabla u(T-s,X^t_s(x))$. Applying  It\^o's formula directly, we derive the following
(forward-backward) stochastic differential system on function space whose solution is
$(X_s^t(x),Y_s^t(x),Z_s^t(x),u(t,x), p(t,x))$,
\begin{equation}\label{e3}
\begin{cases}
& dX_s^t(x)=\sqrt{2\nu} dB_s-u(T-s,X_s^t(x))ds\\
& dY_s^t(x)=\sqrt{2\nu}Z^{t}_s(x)dB_s+\nabla p(T-s, X_s^t(x))ds\\
& Y_t^t(x)=u(T-t,x),\  \Delta p(t,x)=-\sum_{i,j=1}^3\partial_i u^j(t,x)\partial_j u^i(t,x)\\
& X_t^t(x)=x, Y_T^t(x)=u_0(X^t_T(x)).
\end{cases}
\end{equation}
On the other hand, if
$(X_s^t(x),Y_s^t(x),Z_s^t(x),u(t,x),p(t,x))$ is a solution of (\ref{e3}) and $u$ is regular enough, for example,
$u \in C([0,T];C_b^3(\R^d;\R^d))$, then $(X_s^t(x),Y_s^t(x),Z_s^t(x))$ satisfies a backward SDE where $u$ and $p$ appear in the
coefficients, so by \cite[Theorem 3.2]{PP}, the vector field $u(t,x):=Y_{T-t}^{T-t}(x)$ satisfies
equation (\ref{e1}) for $t\in [0,T]$. In particular, we can show that $\nabla \cdot u(t,x)=0$ due to the expression
of $p(t,x)$ in (\ref{e3}).
We refer to \cite{CS} for the formulation
of such system on the space of diffeomorphsim group.
Inspired by this argument, we will construct a solution of the system (\ref{e3}) in Besov space, which is still a solution for
Navier-Stokes equation (\ref{e1}) in such function space.

Let us introduce the following notation:
\begin{equation}\label{e5a}
\begin{split}
&F_v:=\nabla N G_v,\ \ G_v:=\sum_{i,j=1}^d\partial_i v^j\partial_j v^i,
\\
& Nf(x):=C(d)\int_{\R^d}\frac{f(y)}{|x-y|^{d-2}}dy,\ \ \forall f \in C_c^{\infty}(\R^d),
\end{split}
\end{equation}
where $N$ is the Newton's potential in $\R^d$, $C(d)$ is a constant depending on $d$, and $\Delta N f(x)= f(x)$ for every
$f \in C_c^{\infty}(\R^d)$. Furthermore   potential theory (see \cite{S}) assures that $Nf$ is well defined for
every $f\in L^{p'}(\R^d)$ with $1<p'<\frac{d}{2}$.

Given a
$u_0 \in C_c^{\infty}(\R^d;\R^d)$, $v \in C([0,T];C_c^{\infty}(\R^d;\R^d))$ with $\nabla \cdot u_0=0$ and
$\nabla \cdot v(t)=0$ for every $t$,
let $(X_s^t,Y_s^t,Z_s^t)$(we omit the index $v$ here) be the unique solution of the following BSDE,
\begin{equation}\label{e5}
\begin{cases}
& dX_s^t(x)=\sqrt{2\nu} dB_s-v(T-s,X_s^t(x))ds\\
& dY_s^t(x)=\sqrt{2\nu}Z^{t}_s(x)dB_s-F_v(T-s, X_s^t(x))ds\\
& X_t^t(x)=x, Y_T^t(x)=u_0(X^t_T(x)),
\end{cases}
\end{equation}
where
$$F_v (t,x):=F_{v(t)}(x)= \nabla N G_{v(t)} (x),~t\in [0,T], x\in \R^d$$
for simplicity.

By the potential theory, $F_v$ is well defined for every
$v \in W^{2,p'}(\R^d)\bigcap W^{2,p}(\R^d)$ with some $1<p'<\frac{d}{2}$ and $p>d$. Following the methods in \cite[Chapter 2]{MB}, see also \cite{S},
we can prove the following Lemma
on the Besov norm bounds of  $F_v$.

In this paper, the constant $C$ may change in different lines in the proofs, but will be independent
of the variables stated in the conclusion.
\begin{lem}\label{l1} Let $d<p<\infty$, $1\le q \le \infty$ and $1<p'<\frac{d}{2}$. Then for all
$v\in \bigcap_{r>1}B_{p,q}^{r}(\R^d ; \R^d)\cap  W^{2,p'}(\R^d ; \R^d)$ satisfying
$\nabla \cdot v=0$,
and for every $0<\alpha<1$,
we have
\begin{equation}\label{e6}
\begin{split}
&||F_v||_{L^p}\le C_1||\nabla v||_{L^{\infty}}||v||_{L^p},\\
&||F_v||_{B^{1+\alpha}_{p,q}}\le C_1||\nabla v||_{L^{\infty}}||v||_{B^{1+\alpha}_{p,q}},\\
&||F_v||_{B^{2+\alpha}_{p,q}}\le C_1||v||_{W^{2,p}}||v||_{B^{2+\alpha}_{p,q}},
\end{split}
\end{equation}
where $C_1$ is a positive constant
independent of $v$ and $p'$.
\end{lem}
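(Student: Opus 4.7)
The plan is to exploit the divergence-free condition to recast $F_v$ as a Calder\'on-Zygmund operator of order zero applied to natural bilinear expressions in $v$. Since $\nabla\cdot v=0$, a direct computation gives
\begin{equation*}
G_v=\sum_{i,j=1}^d \partial_i v^j\,\partial_j v^i
    =\sum_{i,j=1}^d \partial_j\bigl(v^i\partial_i v^j\bigr)
    =\nabla\cdot\bigl((v\cdot\nabla)v\bigr),
\end{equation*}
where the middle equality uses $\sum_{i,j}v^i\partial_j\partial_i v^j=\sum_i v^i\partial_i(\nabla\cdot v)=0$. Consequently $F_v=\nabla N\,\nabla\cdot\bigl((v\cdot\nabla)v\bigr)$, and formally $\nabla F_v=(\nabla\otimes\nabla N)G_v$. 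Both $R_0:=\nabla N\,\nabla\cdot$ and $R:=\nabla\otimes\nabla N$ are zero-order Fourier multipliers (essentially matrices of double Riesz transforms), hence bounded on $L^p(\R^d)$ for $1<p<\infty$ and on $B^s_{p,q}(\R^d)$ for every $s>0$.

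For (\ref{e6})(i), $L^p$-boundedness of $R_0$ gives at once $\|F_v\|_{L^p}\leqslant C\|(v\cdot\nabla)v\|_{L^p}\leqslant C\|\nabla v\|_{L^\infty}\|v\|_{L^p}$. For (\ref{e6})(ii), I would split $\|F_v\|_{B^{1+\alpha}_{p,q}}=\|F_v\|_{W^{1,p}}+[F_v]_{B^{1+\alpha}_{p,q}}$; the Sobolev piece is handled as in (i) using $\|\nabla F_v\|_{L^p}=\|RG_v\|_{L^p}\leqslant C\|G_v\|_{L^p}\leqslant C\|\nabla v\|_{L^\infty}\|\nabla v\|_{L^p}$, while for the seminorm I use (\ref{e0}) to identify $[F_v]_{B^{1+\alpha}_{p,q}}$ with the $\alpha$-Besov seminorm of $\nabla F_v=RG_v$ and reduce it to $[G_v]_{B^\alpha_{p,q}}$ by Besov continuity of $R$. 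The Moser-type product inequality
\begin{equation*}
\|fg\|_{B^\alpha_{p,q}}\leqslant C\bigl(\|f\|_{L^\infty}\|g\|_{B^\alpha_{p,q}}+\|g\|_{L^\infty}\|f\|_{B^\alpha_{p,q}}\bigr),
\end{equation*}
applied to the two factors $\partial_i v^j$ and $\partial_j v^i$ inside each term of $G_v$, then yields $[G_v]_{B^\alpha_{p,q}}\leqslant C\|\nabla v\|_{L^\infty}[\nabla v]_{B^\alpha_{p,q}}=C\|\nabla v\|_{L^\infty}[v]_{B^{1+\alpha}_{p,q}}$. The same scheme shifted one derivative higher handles (\ref{e6})(iii): write $\nabla^2 F_v=R(\nabla G_v)$, dominate the seminorm of $F_v$ in $B^{2+\alpha}_{p,q}$ by $\|G_v\|_{B^{1+\alpha}_{p,q}}$, apply the product rule at level $s=1+\alpha$ to get $\|G_v\|_{B^{1+\alpha}_{p,q}}\leqslant C\|\nabla v\|_{L^\infty}\|v\|_{B^{2+\alpha}_{p,q}}$, and finally use the Sobolev embedding $W^{2,p}(\R^d)\hookrightarrow C^1(\R^d)$ (valid because $p>d$) to absorb $\|\nabla v\|_{L^\infty}$ into $\|v\|_{W^{2,p}}$; the remaining $W^{2,p}$ contributions to $\|F_v\|_{B^{2+\alpha}_{p,q}}$ are dominated in the same way via $\|\nabla G_v\|_{L^p}\leqslant C\|\nabla v\|_{L^\infty}\|\nabla^2 v\|_{L^p}$.

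The delicate points are: (a) the well-definedness of $NG_v$ and of $RG_v$, which is exactly the role of the auxiliary assumption $v\in W^{2,p'}$ with $1<p'<d/2$, since Sobolev embedding combined with H\"older places $G_v$ in some $L^r$ with $1<r<d/2$, at which point the potential-theoretic setup recalled after (\ref{e5a}) applies; and (b) selecting the correct ``tame'' version of the Besov product rule, ideally via the Littlewood-Paley/paraproduct characterization of $B^s_{p,q}$ recalled in \cite{T1,T}, so that the estimate always pairs an $L^\infty$ factor with a Besov factor --- the symmetric alternative $\|v\|_{L^\infty}[\nabla v]_{B^\alpha_{p,q}}$ would be useless here since no $L^\infty$-control on $v$ itself is assumed in (i)--(ii).
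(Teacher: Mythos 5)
Your proposal is correct and rests on the same two pillars as the paper's proof: the divergence-free rewriting of $G_v$ in divergence form (your $G_v=\sum_{i,j}\partial_j(v^i\partial_i v^j)$ is the paper's (\ref{e6a}) with indices relabelled), and the $L^p$-boundedness of the zero-order singular integrals $\nabla N\partial_i$ and $\nabla^2 N$. Where you diverge is in the treatment of the Besov seminorms: the paper never invokes Besov boundedness of multipliers or a paraproduct-based tame product estimate; instead it works directly with the difference-quotient definition (\ref{e0}), using translation invariance of $N$ to write $\nabla^2F_v(\cdot+y)-\nabla^2F_v(\cdot)=\nabla^2N(\nabla\tilde G_v^y)$ and then bounding $\|\nabla\tilde G_v^y\|_{L^p}$ by the elementary product splitting, which produces the cross term $\|\nabla^2 v\|_{L^p}\,\|\nabla v(\cdot+y)-\nabla v(\cdot)\|_{L^\infty}$ controlled via the embedding (\ref{e7d}) and the exponent $r(p)>\alpha$ (this is where $p>d$ enters the seminorm estimate, in addition to its role in absorbing $\|\nabla v\|_{L^\infty}$ into $\|v\|_{W^{2,p}}$). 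Your route, pairing the $L^\infty$ factor with the Besov factor through the tame estimate $\|fg\|_{B^\alpha_{p,q}}\le C(\|f\|_{L^\infty}\|g\|_{B^\alpha_{p,q}}+\|g\|_{L^\infty}\|f\|_{B^\alpha_{p,q}})$ and its level-$(1+\alpha)$ analogue, is more modular and avoids that particular Hölder-modulus cross term (for $0<\alpha<1$ the estimate is itself an immediate difference-quotient computation; at level $1+\alpha$ you do need the paraproduct machinery or a reduction by one derivative, which essentially reproduces the paper's cross terms). What the paper's hands-on version buys is reuse: the explicit difference inequalities (\ref{e7aa}) and (\ref{e22a}) are the templates for Lemmas \ref{l3}, \ref{l7}, \ref{l10} and \ref{l11}, where no black-box product estimate is available. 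You also correctly identify the role of the auxiliary hypothesis $v\in W^{2,p'}$, $1<p'<\frac d2$, as ensuring that $NG_v$ is well defined, matching Remark \ref{r2.1}.
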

\begin{proof}
Since $\nabla \cdot v=0$,
\begin{equation}\label{e6a}
\begin{split}
& G_v(x)=\sum_{i,j}\partial_i v^j(x)\partial_j v^i(x)=
\sum_{i}\partial_i \big(\sum_j v^j(x)\partial_j v^i(x)\big):=
\sum_i \partial_i f_i(x),
\end{split}
\end{equation}
hence $F_v(x)=\nabla N G_v(x)=\sum_i \nabla N \partial_i f_i (x)$. It is easy to check that
$\nabla N \partial_i$ is a singular integral operator as defined in \cite[Chapter 2]{S}, so it is bounded in
$L^p$ space for every $1<p<\infty$ (see \cite[Theorem 3, Chapter 2]{S}), which implies that
\begin{equation*}
||F_v||_{L^p}\le C \sum_i||f_i||_{L^p}\le C||\nabla v||_{L^{\infty}}||v||_{L^p}.
\end{equation*}
Note that $\nabla F_v(x)=\nabla^2 NG_v(x)$ and that

$\nabla^2 N$ is a singular integral operator, we obtain,
\begin{equation*}
||\nabla F_v||_{L^p}\le C ||G_v||_{L^p} \le C||\nabla v||_{L^{\infty}}||\nabla v||_{L^p}.
\end{equation*}
In the same way as above we can show that,
\begin{equation*}
||\nabla^2 F_v||_{L^p}\le C ||\nabla G_v||_{L^p} \le C||\nabla v||_{L^{\infty}}||\nabla^2 v||_{L^p}.
\end{equation*}
For every $y\in \R^d$, let $\tilde G_v^y(x):=G_v(x+y)-G_v(x)$. Then

\begin{equation*}
\begin{split}
& ||\nabla \tilde G_v^y||_{L^p}\\
&\le C\Big(||\nabla v||_{L^{\infty}}||\nabla^2 v(\cdot +y)-
\nabla^2 v(\cdot )||_{L^p}+||\nabla^2 v||_{L^p}||\nabla v(\cdot +y)-
\nabla v(\cdot )||_{L^{\infty}}\Big).
\end{split}
\end{equation*}
Since $N$ is translation invariant,
\begin{equation*}
\nabla^2F_v(x+y)-\nabla^2F_v(x)=\nabla^2 N \big(\nabla \tilde G_v^y\big)(x).
\end{equation*}
Applying singular integral estimates to $\nabla^2 N$ we obtain

\begin{equation}\label{e7aa}
\begin{split}
&||\nabla^2F_v(\cdot +y)-\nabla^2F_v(\cdot )||_{L^p}\le C||\nabla \tilde G_v^y||_{L^p}\\
&\le C\Big(||\nabla v||_{L^{\infty}}||\nabla^2 v(\cdot +y)-
\nabla^2 v(\cdot )||_{L^p}+||\nabla^2 v||_{L^p}||\nabla v(\cdot +y)-
\nabla v(\cdot )||_{L^{\infty}}\Big).
\end{split}
\end{equation}
According to the embedding theorem \cite[Theorem 2.8.1]{T}, since $ v \in B_{p,q}^{2+\alpha}(\R^d;\R^d)$ and $p>d$,
\begin{equation}\label{e7d}
|\nabla v(x)-\nabla v(y)|\le C ||v||_{B_{p,q}^{2+\alpha}}|x-y|^{r(p)},
\ \forall \ x,y \in \R^d,
\end{equation}
\begin{equation*}
||\nabla v||_{L^{\infty}}\le C||v||_{W^{2,p}}\le C|| v||_{B_{p,q}^{2+\alpha}},
\end{equation*}
where $r(p)$ is given by
\begin{equation}\label{e7a}
r(p):=\begin{cases}
& \text{min}\{1+\alpha-\frac{d}{p},1\} ,\ \ \ \ \ \ \text{if}\ \ \ 1+\alpha-\frac{d}{p} \neq 1,\\
& \text{any} \ \text{real} \ \text{number} \  \in (\alpha,1),\ \ \  \text{if}\ \ \ 1+\alpha-\frac{d}{p} = 1.
\end{cases}
\end{equation}
Applying this to (\ref{e7aa}), we deduce that
\begin{equation*}
\begin{split}
&||\nabla^2F_v(\cdot +y)-\nabla^2F_v(\cdot )||_{L^p}\le C\Big(
||v||_{W^{2,p}}||\nabla^2 v(\cdot +y)-
\nabla^2 v(\cdot )||_{L^p}\\
&+|| v||_{W^{2,p}}||v(t)||_{B_{p,q}^{2+\alpha}}\big(|y|^{r(p)}
1_{\{|y|\le 1\}}+1_{\{|y|>1\}}\big)\Big),
\end{split}
\end{equation*}
which together with (\ref{e0}) and (\ref{e0a}), we conclude
that
\begin{equation*}
||F_v ||_{B^{2+\alpha}_{p,q}}\le C|| v||_{W^{2,p}}||v||_{B_{p,q}^{2+\alpha}}.
\end{equation*}
Combining the above estimates together, we may obtain the third
estimate in (\ref{e6}).

The second estimate in (\ref{e6}) may be proved following a similar procedure.
\end{proof}

\begin{rem}\label{r2.1}
According to potential theory, in general $F_v$ is not well defined without the $L^{p'}$ integrable condition on
$v$ for some $1<p'<\frac{d}{2}$. Note that (\ref{e6}) is independent of $p'$ and the $L^{p'}$ norm, although we assume
$v \in W^{2,p'}(\R^d;\R^d)$ to ensure that the Newton's potential $N$ is well defined.
By an approximation argument (see the proof of Proposition \ref{p1} below), if $\nabla \cdot v=0$, and
$v\in W^{2,p}(\R^d;\R^d)$ with some $p>d$, $F_v$ is still well defined.
\end{rem}

From now on, in this paper, for $1<p<\infty$, $1\le  q \le \infty$, $1<p'<\frac{d}{2}$, we define
\begin{equation}\label{e2a}
\begin{split}
&\S(p,q,p',T):=\Big\{ v \in C([0,T];C_b^{\infty}(\R^d;\R^d));\\
& \ \sup_{t \in [0,T]}
\big(||v(t)||_{B_{p,q}^{r}}+||v(t)||_{W^{2,p'}}\big)<\infty,\
\text{for}\
\forall\ r>1 ;
\ \ \
\nabla \cdot v(t)=0,\ \forall t \in [0,T] \Big\}.
\end{split}
\end{equation}

For every $v \in \S(p,q,p',T)$ with some $1<p<\infty$, $1\le q \le \infty$, $1<p'<\frac{d}{2}$,
$u_0\in C_b^{\infty}(\R^d;\R^d)$
with $\nabla \cdot u_0=0$,
$F_v$ is well defined and we can define
$\tilde \I_{\nu}(u_0,v)\in C([0,T];C_b^{\infty}(\R^d;\R^d))$ such that
$$\tilde \I_{\nu}(u_0,v)(t):=\mathbf{P}\big(Y_{T-t}^{T-t}(.)\big)$$
 for every
$t \in [0,T]$, where $\mathbf{P}$ denotes the Leray-Hodge projection on the space of
divergence free vector fields.
In particular, for every $v \in \S(p,q,p',T)$,
we define
$$\I_{\nu}(v):=\tilde \I_{\nu}(v(0),v).$$

Intuitively, if we can find a fixed point $v$  for the map
$\I_{\nu}$ (in some function space), then $v$ will be a solution of (\ref{e3}), hence a
solution of (\ref{e1}). In this work we will prove  that we can extend such map $\I_{\nu}$
to be defined in some Besov space, that $\I_{\nu}$ has a unique fixed point in such space, and that
the fixed point $v$
can be viewed as a solution of the Navier-Stokes equation (\ref{e1}).

We shall need the following result:
\begin{lem}\label{l3}
Suppose that  $v_m \in \S(p,q,p',T)$, $m=1,2$,
for some $d<p<\infty$, $1\le q \le \infty$, $1<p'<\frac{d}{2}$, $T>0$.
Then for every $0<\alpha<1$, $t \in [0,T]$, the functions $F_{v_m(t)}, G_{v_m(t)}$ defined by
(\ref{e5a})  satisfy the following inequalities,
\begin{equation}\label{e8aa}
\begin{split}
&||F_{v_1(t)}-F_{v_2(t)}||_{W^{1,p}}\le C_1
\sup_{m=1,2}||v_m(t)||_{W^{2,p}} ||v_1(t)-v_2(t)||_{W^{1,p}},\\
&||F_{v_1(t)}-F_{v_2(t)}||_{B^{1+\alpha}_{p,q}}\le C_1K||v_1(t)-v_2(t)||_{B^{1+\alpha}_{p,q}},
\end{split}
\end{equation}
where $K:=\sup_{t \in [0,T],\ m=1,2}||v_m(t)||_{B^{2+\alpha}_{p,q}}$, $C_1$ is independent of $v$, $K$, $p'$ and $T$.
\end{lem}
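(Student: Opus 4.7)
The plan is to mimic the proof of Lemma \ref{l1} applied to the difference $F_{v_1}-F_{v_2}=\nabla N(G_{v_1}-G_{v_2})$. Writing $w:=v_1-v_2$ and using $\nabla\cdot v_m=0$, I would first expand
\begin{equation*}
G_{v_1}-G_{v_2}=\sum_{i,j}\bigl(\partial_i w^j\,\partial_j v_1^i+\partial_i v_2^j\,\partial_j w^i\bigr)=\sum_i\partial_i\bigl(f^{(1)}_i-f^{(2)}_i\bigr),
\end{equation*}
where $f^{(m)}_i=\sum_j v_m^j\partial_j v_m^i$ as in (\ref{e6a}), so that $F_{v_1}-F_{v_2}=\sum_i\nabla N\partial_i(f^{(1)}_i-f^{(2)}_i)$ is the singular integral operator $\nabla N\partial_i$ acting on a difference that is bilinear and symmetric in $(w,v_m)$.

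For the first inequality I would apply the $L^p$-boundedness of the singular integral operators $\nabla N\partial_i$ and $\nabla^2 N$ directly, obtaining
\begin{equation*}
||F_{v_1}-F_{v_2}||_{L^p}\le C\sum_i||f^{(1)}_i-f^{(2)}_i||_{L^p},\qquad ||\nabla(F_{v_1}-F_{v_2})||_{L^p}\le C||G_{v_1}-G_{v_2}||_{L^p},
\end{equation*}
and then control the right-hand sides by H\"older's inequality together with the Sobolev embedding $||v_m||_{L^\infty}+||\nabla v_m||_{L^\infty}\le C||v_m||_{W^{2,p}}$ valid for $p>d$. This yields the desired bound $C\sup_m||v_m||_{W^{2,p}}\,||w||_{W^{1,p}}$.

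For the Besov seminorm, I would exploit translation invariance of $N$ to write
\begin{equation*}
\nabla(F_{v_1}-F_{v_2})(\cdot+y)-\nabla(F_{v_1}-F_{v_2})(\cdot)=\nabla^2 N\bigl[(G_{v_1}-G_{v_2})(\cdot+y)-(G_{v_1}-G_{v_2})(\cdot)\bigr]
\end{equation*}
and apply singular integral estimates to reduce matters to controlling the $L^p$ modulus of continuity of $G_{v_1}-G_{v_2}$. The discrete Leibniz identity $(ab)(x+y)-(ab)(x)=[a(x+y)-a(x)]b(x+y)+a(x)[b(x+y)-b(x)]$ applied to each product in the expansion of $G_{v_1}-G_{v_2}$ produces two natural pieces: one controlled by $||\nabla v_m||_{L^\infty}||\nabla w(\cdot+y)-\nabla w(\cdot)||_{L^p}$, and another by $||\nabla w||_{L^p}||\nabla v_m(\cdot+y)-\nabla v_m(\cdot)||_{L^\infty}$. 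The first piece, once integrated against $|y|^{-d-\alpha q}$, gives exactly $[w]_{B^{1+\alpha}_{p,q}}$; for the second I would invoke the H\"older-type embedding (\ref{e7d}) so that $||\nabla v_m(\cdot+y)-\nabla v_m(\cdot)||_{L^\infty}\le C||v_m||_{B^{2+\alpha}_{p,q}}\min(|y|^{r(p)},1)$, and the residual weight $|y|^{-d-\alpha q}\min(|y|^{r(p)q},1)$ is integrable because $r(p)>\alpha$ when $p>d$, contributing only a finite constant. The case $q=\infty$ is handled identically by taking suprema over $|y|>0$ instead of integrating.

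The main obstacle is the bookkeeping in this last step: arranging the Leibniz expansion so that only the $B^{1+\alpha}_{p,q}$ seminorm of $w$ appears on the right while the full $B^{2+\alpha}_{p,q}$ regularity of the $v_m$'s is absorbed into the uniform constant $K$. As remarked in \ref{r2.1}, all the singular integral bounds employed depend only on $p$, so the final constant $C_1$ is independent of $p'$ and $T$.
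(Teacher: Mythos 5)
Your proposal is correct and follows essentially the same route as the paper: both reduce $F_{v_1}-F_{v_2}$ to singular integrals of a difference of quadratic forms in which $w=v_1-v_2$ appears linearly, control the $L^p$ parts by H\"older and the Sobolev embedding, and handle the Besov seminorm by splitting the translated difference of each product into a piece carrying $[\,w\,]_{B^{1+\alpha}_{p,q}}$ and a piece absorbed via the embedding (\ref{e7d}) and the integrability of $|y|^{-d-\alpha q}\min(|y|^{r(p)q},1)$ (using $r(p)>\alpha$). Your explicit bilinear expansion followed by the two-term Leibniz identity produces exactly the same terms as the paper's four-term inequality (\ref{e22a}), so this is a difference of bookkeeping only.
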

\begin{proof}
It is a consequence of Lemma 2.1,  the bilinearity
property of  the map $v\rightarrow F_v$ and the inequality,
\begin{equation}\label{e22a}
\begin{split}
&\Big|f_1(x)g_1(x)-f_2(x)g_2(x)-\big(
f_1(y)g_1(y)-f_2(y)g_2(y)\big)\Big|\\
&\le |g_1(x)|\big|f_1(x)-f_2(x)-(f_1(y)-f_2(y))\big|
+ |f_1(y)||\big|g_1(x)-g_2(x)-(g_1(y)-g_2(y))\big|\\
&+|f_2(x)-f_2(y)||g_1(x)-g_2(x)|
+|g_2(x)-g_2(y)||f_1(y)-f_2(y)|
\end{split}
\end{equation}
Indeed we can write
$F_{v_1(t)}-F_{v_2(t)}=\sum_i \nabla N \Big(\partial_i \big(f_{i,1}(t)-f_{i,2}(t)\big)\Big)$,
where $f_{i,m}(t,x):=\sum_j v_m^j(t,x)\partial_j v_m^i(t,x)$.
We have,
\begin{equation*}
\begin{split}
& ||F_{v_1(t)}-F_{v_2(t)}||_{L^p}\le C\sum_i ||f_{i,1}(t)-f_{i,2}(t)||_{L^p}\\
&\le C\Big(||\nabla v_1(t)||_{L^{\infty}}||v_1(t)-v_2(t)||_{L^p}+||v_2(t)||_{L^{\infty}}
||\nabla v_1(t)-\nabla v_2(t)||_{L^p}\Big)\\
&\le C\sup_{m=1,2}||v_m(t)||_{W^{2,p}} ||\nabla v_1(t)-\nabla v_2(t)||_{W^{1,p}},
\end{split}
\end{equation*}

As $\nabla \big(F_{v_1(t)}-F_{v_2(t)}\big)=\nabla^2 N \big(G_{v_1(t)}-G_{v_2(t)}\big)$,  we have,
\begin{equation*}
\begin{split}
&||\nabla F_{v_1(t)}-\nabla F_{v_2(t)}||_{L^p}\le C||G_{v_1(t)}-G_{v_2(t)}||_{L^p}\\
& \le C\big(||\nabla v_1(t)||_{L^{\infty}}+||\nabla v_2(t)||_{L^{\infty}}\big)||\nabla v_1(t)-\nabla v_2(t)||_{L^p}\\
&\le C\sup_{m=1,2}||v_m(t)||_{W^{2,p}}||\nabla v_1(t)-\nabla v_2(t)||_{W^{1,p}}.
\end{split}
\end{equation*}
therefore the first estimate holds.

Concerning the second estimate, writting  $\tilde G_{v_m(t)}^y(x):=G_{v_m(t)}(x+y)-G_{v_m(t)}(x)$, $m=1,2$,  by
(\ref{e22a}) we can get,
\begin{equation}\label{e23aa}
\begin{split}
& |\tilde G_{v_1(t)}^y(x)-\tilde G_{v_2(t)}^y(x)|\\
&\le C\Big(||\nabla v_1(t)||_{L^{\infty}}\big|\nabla v_1(t,x+y)-
\nabla v_1(t,x)-(\nabla v_2(t,x+y)-
\nabla v_2(t,x))\big|\\
&+\big|\nabla v_2(t,x+y)-\nabla v_2(t,x)\big|
\Big(|\nabla v_1(t,x+y)-\nabla v_2(t,x+y)|+
|\nabla v_1(t,x)-\nabla v_2(t,x)|\Big).
\end{split}
\end{equation}
Then according to (\ref{e7d}) we derive,
\begin{equation*}
\begin{split}
&||\big(\nabla F_{v_1(t)}(\cdot +y)-\nabla F_{v_1(t)}(\cdot )\big)-\big(\nabla F_{v_2(t)}(\cdot +y)-\nabla F_{v_2(t)}(\cdot )\big)||_{L^p}=
||\nabla^2 N \big(\tilde G_{v_1(t)}^y-\tilde G_{v_2(t)}^y\big)||_{L^p}\\
&\le C||\tilde G_{v_1(t)}^y-\tilde G_{v_2(t)}^y||_{L^p}
\le CK ||\big(\nabla v_1-\nabla v_2\big)(t,\cdot+y)-
\big(\nabla v_1-\nabla v_2\big)(t,\cdot)||_{L^p}\\
&+CK\big(|y|^{r(p)}1_{\{|y|\le 1\}}+1_{\{|y|> 1\}}\big)||\nabla v_1(t)- \nabla v_2(t)||_{L^p},
\end{split}
\end{equation*}
which implies that
\begin{equation*}
[\nabla F_{v_1(t)}-\nabla F_{v_2(t)}]_{B^{1+\alpha}_{p,q}}\le CK ||v_1(t)-v_2(t)||_{B^{1+\alpha}_{p,q}}.
\end{equation*}

\end{proof}



\section{The local existence theorem in  $B_{p,p}^{r}(\R^d;\R^d)$}

We first prove the following estimate.
\begin{lem}\label{l7}
Suppose that $v\in \S(p,p,p',T)$,
where $d<p<\infty$, $1<p'<\frac{d}{2}$, $T>0$.
Let $X$ be the
unique solution of the first equation of (\ref{e5}) (with coefficients $v$). Then for
every $0<\alpha<1$ and $f \in B_{p,p}^{2+\alpha}(\R^d)$, we have
\begin{equation}\label{e24a}
\sup_{0 \le s \le t \le T}||f\circ X_s^t||_{B_{p,p}^{2+\alpha}}\le C_1e^{C_1KT}
(1+T^{2}K^{2})||f||_{B_{p,p}^{2+\alpha}}, a.s.,
\end{equation}
where $f\circ X_s^t$ denotes the composition of function $f$ and the map $X_s^t:\R^d \rightarrow \R^d$,
$K:=\sup_{t \in [0,T]}||v(t)||_{B_{p,p}^{2+\alpha}}$, $C_1$ is a  positive constant independent of
$K$, $\nu$, $T$, $p'$, $v$ and $f$.
\end{lem}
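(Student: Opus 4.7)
The plan is built around the observation that the SDE for $X_s^t$ has \emph{additive} noise, so differentiation in the spatial variable kills the Brownian term: $\nabla X_s^t$ satisfies the pathwise linear ODE $\partial_s \nabla X_s^t = -(\nabla v)(T-s,X_s^t)\,\nabla X_s^t$ with $\nabla X_t^t=I$. Together with the incompressibility assumption $\nabla\cdot v=0$, Liouville's identity forces $\det\nabla X_s^t\equiv 1$, so almost surely $X_s^t$ is a volume-preserving smooth diffeomorphism. In particular, composition with $X_s^t$ is an $L^p(\R^d)$-isometry, and so is the induced change of variable $y\mapsto X_s^t((X_s^t)^{-1}(z)+y)-z$ for fixed $z$. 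These isometry properties will be the workhorse that turns every $L^p$-norm of a composition back into an $L^p$-norm of the inner function.

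First I would deduce a priori bounds on the flow derivatives. Using the embedding $B_{p,p}^{2+\alpha}\hookrightarrow C^{1,r(p)}$ (valid for $p>d$, with $r(p)$ as in (\ref{e7a})) to get $||\nabla v(T-s,\cdot)||_{L^\infty}\le CK$, Grönwall on the above ODE delivers $||\nabla X_s^t||_{L^\infty}\le e^{CKT}$ a.s., and the cofactor identity for $\det\nabla X_s^t=1$ supplies the matching lower Lipschitz bound $|X_s^t(x+y)-X_s^t(x)|\ge e^{-CKT}|y|$. Differentiating once more yields a linear inhomogeneous ODE for $\nabla^2 X_s^t$ with source $-(\nabla^2 v)(T-s,X_s^t)(\nabla X_s^t)^{\otimes 2}$; taking $L^p$-norms, using volume preservation to strip off the inner $X_s^t$, and Grönwall produce $||\nabla^2 X_s^t||_{L^p}\le CTK\,e^{CKT}$. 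Applying the same reasoning to the finite difference $\nabla^2 X_s^t(\cdot+y) - \nabla^2 X_s^t(\cdot)$, using the Hölder bound (\ref{e7d}) for $\nabla v$ and the Besov seminorm of $\nabla^2 v$, produces the companion bound $[\nabla^2 X_s^t]_{B_{p,p}^{\alpha}}\le C(TK+T^2K^2)e^{CKT}$.

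Next I apply the chain rule
\begin{equation*}
\nabla^2(f\circ X_s^t) = \bigl((\nabla^2 f)\circ X_s^t\bigr)(\nabla X_s^t)^{\otimes 2} + \bigl((\nabla f)\circ X_s^t\bigr)\,\nabla^2 X_s^t,
\end{equation*}
and, combining the $L^p$-isometry of composition with the embedding $||\nabla f||_{L^\infty}\le C||f||_{B_{p,p}^{2+\alpha}}$, bound the $W^{2,p}$-part of $||f\circ X_s^t||_{B_{p,p}^{2+\alpha}}$ by $Ce^{CKT}(1+TK)||f||_{B_{p,p}^{2+\alpha}}$, which is already of the right form.

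The main obstacle is the Besov seminorm $[f\circ X_s^t]_{B_{p,p}^{2+\alpha}}$. Expanding $\nabla^2(f\circ X_s^t)(x+y) - \nabla^2(f\circ X_s^t)(x)$ via an add-and-subtract identity modelled on (\ref{e22a}) produces four summands, each a product of pointwise-bounded factors and a single spatial difference. The three summands whose difference falls on $\nabla X_s^t$ or $\nabla^2 X_s^t$ are absorbed by the flow seminorm estimate from step one, multiplied by $||\nabla f||_{L^\infty}$ or $||\nabla^2 f||_{L^p}$. The delicate summand contains $(\nabla^2 f)(X_s^t(x+y)) - (\nabla^2 f)(X_s^t(x))$: for this I change variables $z = X_s^t(x)$ and then, for each $z$, $h = X_s^t((X_s^t)^{-1}(z)+y)-z$, both substitutions being measure-preserving (Jacobian $\det\nabla X_s^t = 1$). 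The bi-Lipschitz estimate $e^{-CKT}|y|\le|h|\le e^{CKT}|y|$ then lets me compare the resulting double integral to the Besov seminorm of $\nabla^2 f$, losing only a multiplicative constant polynomial in $e^{CKT}$. Collecting all pieces and simplifying the exponential and polynomial prefactors yields the claimed (\ref{e24a}).
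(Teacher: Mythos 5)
Your proposal is correct and follows essentially the same route as the paper's proof: pathwise ODEs for $\nabla X_s^t$ and $\nabla^2 X_s^t$ with Gr\"onwall (exploiting that the additive noise kills the martingale part), volume preservation from $\nabla\cdot v=0$ used as an $L^p$-isometry, the four-term decomposition of $\nabla^2(f\circ X_s^t)(x+y)-\nabla^2(f\circ X_s^t)(x)$, and the measure-preserving change of variables plus the Lipschitz bound on the flow to control the seminorm of $(\nabla^2 f)\circ X_s^t$ (the paper's Steps 1--5 and estimate (\ref{e12})). The only cosmetic differences are your cofactor-identity justification of the lower Lipschitz bound, where the paper simply applies the forward Lipschitz estimate at preimage points, and the omitted (routine) density argument reducing general $f\in B_{p,p}^{2+\alpha}$ to $C^2$ functions.
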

\begin{proof}
{\bf Step 1}:
We need the $W^{2,p}$ estimate in \cite[Lemma 3.5]{Z} which is standard, but
for the reader's convenience, we follow the same method of that reference and
write the proof again.

Since $p>d$, by the Sobolev embedding theorem,
\begin{equation*}
\big(||v(t)||_{L^{\infty}}+||\nabla v(t)||_{L^{\infty}}\big)
\le C||v(t)||_{W^{2,p}}\le C||v(t)||_{B_{p,p}^{2+\alpha}} \le CK.
\end{equation*}
As $v \in \S(p,p,p',T)$, there is a version of
$X_s^t(.)$ which is $C^{\infty}$ differentiable and its first and second order differentials
$\nabla X_s^t$, $\nabla^2 X_s^t$ ($0\le t \le s \le T$) satisfy the following equation,
\begin{equation}\label{e7}
\begin{cases}
&d\nabla X_s^t(x)=-\nabla v(T-s,X_s^t(x))\nabla X_s^t(x)ds\\
&d\nabla^2 X_s^t(x)=-\nabla v(T-s,X_s^t(x))\nabla^2 X_s^t(x)ds-
\nabla^2 v(T-s,X_s^t(x))\big(\nabla X_s^t(x)\big)^2 ds\\
&\nabla X_s^t(x)=\mathbf{I},\ \ \nabla^2 X_s^t(x)=0,\ \
\end{cases}
\end{equation}
where $\mathbf{I}$ denotes identity map in $\R^d$.
Since $\nabla \cdot v(t)=0$, $X_s^t(.)$ and $(X_s^t)^{-1}(.)$ are volume preserving maps (see \cite{KU}),
 for every $h\in L^1(\R^d)$, $0\le t \le s \le T$, $h \circ X_s^t(\cdot)$ is
 almost surely a.e. well
 defined (with respect to Lebesgue measure), and
\begin{equation}\label{e10aa}
\int_{\R^d}h(X_s^t(x))dx=\int_{\R^d} h(x)dx,\ \ a.s..
\end{equation}
So we have, for $f\geqslant 0$,
\begin{equation}\label{l7.1}
\int_{\R^d}f^p(X_s^t(x))dx=\int_{\R^d} f^p(x)dx= ||f||_{L^p}^p,\ \ a.s..
\end{equation}

Note that $||\nabla v(t)||_{L^{\infty}}\le CK$ and the martingale part
of (\ref{e7}) vanishes. Applying Grownwall lemma, for every
$0\le t \le s \le T$, we have
\begin{equation}\label{e8}
|\nabla X_s^t(x)|\le Ce^{CKT},\ \ \ a.s, \ \ \forall\ x \in \R^d.
\end{equation}
Hence from (\ref{e7}),
\begin{equation*}
\begin{split}
&|\nabla^2 X_s^t(x)| \le K\int_t^s |\nabla^2 X_r^t(x)| dr+Ce^{CKT}\int_t^s
|\nabla^2 v(T-r,X_r^t(x))| dr,
\end{split}
\end{equation*}
then by Grownwall lemma, we derive that
\begin{equation*}
|\nabla^2 X_s^t(x)|\le Ce^{CKT}\int_t^s
|\nabla^2 v(T-r,X_r^t(x))| dr,
\end{equation*}
together with (\ref{e10aa}) and H\"older's inequality,
\begin{equation}\label{e9aa}
\begin{split}
&\int_{\R^d}|\nabla^2 X_s^t(x)|^p dx \le CT^{p-1}e^{CKT}\int_t^s \big(\int_{\R^d}|\nabla^2 v(T-r,X_r^t(x))|^p dx\big) dr\\
&\le CT^p e^{CKT}\sup_{t \in [0,T]}||v(t)||_{W^{2,p}}^p.
\end{split}
\end{equation}

Since $f \in B_{p,p}^{2+\alpha}(\R^d)$, $f \in C^1(\R^d)$ due to Sobolev embedding
theorem, so $\nabla (f \circ X_s^t)(x)=\nabla f (X_s^t(x))\nabla X_s^t(x)$, by (\ref{e10aa}) and
(\ref{e8}), we deduce that
\begin{equation}\label{l7.2}
\int_{\R^d}|\nabla (f \circ X_s^t)(x)|^p dx \le Ce^{CKT}||\nabla f||_{L^p}^p,\ \ a.s..
\end{equation}
Again note that if $f \in C^2(\R^d) \bigcap B_{p,p}^{2+\alpha}(\R^d)$,
\begin{equation}\label{e10a}
\nabla^2 (f \circ X_s^t)(x)=\nabla^2 f (X_s^t(x))(\nabla X_s^t(x))^2+
\nabla f (X_s^t(x))\nabla^2 X_s^t(x),
\end{equation}
by (\ref{e10aa})
(\ref{e8}) and (\ref{e9aa}),
\begin{equation}\label{l7.3}
\begin{split}
& \int_{\R^d} |\nabla^2 (f \circ X_s^t)(x)|^p dx\\
&\le C||\nabla X_s^t(.)||_{L^{\infty}}^{2p}
\int_{\R^d}|\nabla^2 f (X_s^t(x))|^p dx+C||\nabla f||_{L^{\infty}}^p
\int_{\R^d}|\nabla^2 X_s^t(x)|^p dx \\
&\le Ce^{CKT}||\nabla^2 f||_{L^p}^p+CT^pK^pe^{CKT}||\nabla f ||_{W^{1,p}}^p
\le C(1+T^pK^p)e^{CKT}||f||_{W^{2,p}}^p,
\end{split}
\end{equation}
where the second inequality follows from the Sobolev embedding theorem which in paricular implies that
$||\nabla f||_{L^{\infty}}\le C||\nabla f||_{W^{1,p}}$.

For general $f \in B_{p,p}^{2+\alpha}(\R^d)$, we choose a sequence
$\{f_n\}_{n=1}^{\infty}\subseteq C^2(\R^d)\bigcap B_{p,p}^{2+\alpha}(\R^d)$, such that
$\lim_{n \rightarrow \infty}||f_n-f||_{W^{2,p}}=0$, by (\ref{e10aa}), (\ref{e10a})
and approximation procedure, we know $f \circ X_s^t \in W^{2,p}(\R^d)$, and the estimate
(\ref{e10a}), (\ref{l7.3}) still hold.

{\bf Step 2}:  By (\ref{e8}), for every $0\le t \le s \le T$, $x,y \in \R^d$,
\begin{equation}\label{e10}
|X_s^t(x)-X_s^t(y)|\le Ce^{CKT}|x-y|,\ a.s..
\end{equation}
Let $\Gamma_s^t(x,y):=\nabla X_s^t(x)-\nabla X_s^t(y)$.
By (\ref{e7}) and noting that the martingale
part vanishes, we get that
\begin{equation}\label{e11}
\begin{split}
&|\Gamma_s^t(x,y)|
\le C\int_t^s\Big(\big(|\nabla v(T-r,X_r^t(x))||\Gamma_r^t(x,y)|\big)\\
&+
\big(|\nabla v(T-r,X_r^t(x))-\nabla v(T-r,X_r^t(y))||\nabla X_r^t(y)|\big)\Big)dr.
\end{split}
\end{equation}
Together with (\ref{e7d}), we deduce that
\begin{equation*}
|\nabla v(t,x)-\nabla v(t,y)|\le C ||v(t)||_{B_{p,p}^{2+\alpha}}|x-y|^{r(p)},
\ \forall \ x,y \in \R^d,
\end{equation*}
where $r(p)$ is defined by (\ref{e7a}). Hence for every
$x,y \in \R^d$,
\begin{equation}\label{e13}
|\nabla v(T-r,X_r^t(x))-\nabla v(T-r,X_r^t(y))|\le CK
|X_r^t(x)-X_r^t(y)|^{r(p)}\le CKe^{CKT}|x-y|^{r(p)}, a.s.,
\end{equation}
where we have used the estimate (\ref{e10}).
Applying such estimate to (\ref{e11}), we obtain
\begin{equation*}
\begin{split}
&|\Gamma_s^t(x,y)| \le CK\int_t^s
|\Gamma_r^t(x,y)| dr+CTKe^{CKT}|x-y|^{r(p)}.
\end{split}
\end{equation*}
Finally, by Grownwall lemma, for every $0\le t \le s \le T$,
\begin{equation}\label{e28}
|\nabla X_s^t(x)-\nabla X_s^t(y)|
=|\Gamma_s^t(x,y)| \le CT Ke^{CKT}|x-y|^{r(p)},\ a.s..
\end{equation}

{\bf Step 3}: Note that for every $h \in B_{p,p}^{\alpha}(\R^d)$,
\begin{equation}\label{e24c}
\begin{split}
&\int_{\R^d}\int_{\R^d}
\frac{|h (X_s^t(x+y))-h(X_s^t(x))|^p}{|y|^{d+\alpha p}}dx dy\\
&=\int_{\R^d}\int_{\R^d}
\frac{|h(X_s^t(y))-h(X_s^t(x))|^p}{|x-y|^{d+\alpha p}}dx dy\\
&=\int_{\R^d}\int_{\R^d}\frac{|h(y)-h(x)|^p}{|(X_s^t)^{-1}(x)-(X_s^t)^{-1}(y)|^{d+\alpha p}}dx dy,
\end{split}
\end{equation}
where in the last step we have  used (\ref{e10aa}) and the
change of variable. On the other hand, according to (\ref{e10}),
\begin{equation*}
\begin{split}
&|x-y|=|X_s^t\big((X_s^t)^{-1}(x)\big)-X_s^t\big((X_s^t)^{-1}(y)\big)|\le
Ce^{CKT}|(X_s^t)^{-1}(x)-(X_s^t)^{-1}(y)|,
\end{split}
\end{equation*}
which combining with (\ref{e24c}) yields that
\begin{equation}\label{e12}
\begin{split}
&\int_{\R^d}\int_{\R^d}
\frac{|h (X_s^t(x+y))-h(X_s^t(x))|^p}{|y|^{d+\alpha p}}dx dy\\
&\le Ce^{CKT}\int_{\R^d}\int_{\R^d}\frac{|h(y)-h(x)|^p}{|x-y|^{d+\alpha p}}dx dy
=Ce^{CKT}[h]_{B_{p,p}^{\alpha}}^p.
\end{split}
\end{equation}
{\bf Step 4}: Let $\Upsilon_s^t(x,y):=\nabla^2 X_s^t(x+y)- \nabla^2 X_s^t(x)$. By using similar arguments as
above, together with an application of  It\^o 's formula to
$\nabla^2 X_s^t(x+y)- \nabla^2 X_s^t(x)$, and using the estimates
(\ref{e8}), (\ref{e13}) and (\ref{e28}), we obtain
\begin{equation*}
\begin{split}
& |\Upsilon_s^t(x,y)| \le K \int_t^s
|\Upsilon_r^t(x,y)| dr+CKe^{CKT}\big(|y|^{r(p)}1_{\{|y|\le 1\}}
+1_{\{|y|>1\}}\big)\int_t^s |\nabla^2 X_r^t(x)| dr\\
&+Ce^{CKT}\int_t^s |\nabla^2 v(T-r, X_r^t(x+y))-\nabla^2 v(T-r, X_r^t(x))| dr\\
&+CT Ke^{CKT}\big(|y|^{r(p)}1_{\{|y|\le 1\}}
+1_{\{|y|>1\}}\big)\int_t^s  |\nabla^2 v(T-r, X_r^t(x))| dr, \ a.s.,
\end{split}
\end{equation*}
Applying Grownwall lemma,  H\"older inequality and  properties (\ref{e10aa}), (\ref{e12}) , we have,
\begin{equation}\label{e11a}
\begin{split}
&\int_{\R^d}\int_{\R^d}\frac{|\Upsilon_s^t(x,y)|^p}{|y|^{d+\alpha p}}dydx
\le Ce^{CKT}(T^pK^p+T^{2p}K^{2p}).
\end{split}
\end{equation}
{\bf Step 5}: By (\ref{e8}) and (\ref{e10a}), for every $0\le t \le s \le T$,
\begin{equation*}
\begin{split}
& [\nabla^2 (f\circ X_s^t)]_{B_{p,p}^{\alpha}}^p=
\int_{\R^d}\int_{\R^d}\frac{|\nabla^2 (f\circ X_s^t)(x+y)-
\nabla^2 (f\circ X_s^t)(x)|^p}{|y|^{d+\alpha p}}dydx\\
&\le C\int_{\R^d}\int_{\R^d}||\nabla f||_{L^{\infty}}^p
\frac{|\Upsilon_s^t(x,y)|^p}{|y|^{d+\alpha p}}dydx\\
&+C\big(\int_{\R^d}|\nabla^2  X_s^t(x)|^p dx\big)\big(
\int_{\R^d}\frac{||\nabla f \circ X_s^t(\cdot +y)-\nabla f \circ X_s^t(\cdot )||_{L^{\infty}}^p}{|y|^{d+\alpha p}}dy\big)\\
&+Ce^{CKT}\big(\int_{\R^d}|\nabla^2 f \circ X_s^t(x)|^p dx\big)
\big(\int_{\R^d}\frac{||\nabla X_s^t(\cdot +y)-\nabla X_s^t(\cdot )||_{L^{\infty}}^p}{|y|^{d+\alpha p}}dy\big)\\
&+Ce^{CKT}\int_{\R^d}\int_{\R^d}
\frac{|\nabla^2 f \circ X_s^t(x+y)-\nabla^2 f \circ X_s^t(x)|^p}{|y|^{d+\alpha p}}dydx\\
&:=\sum_{i=1}^4 I_i.
\end{split}
\end{equation*}
Due to (\ref{e11a}) and Sobolev embedding theorem, we have
$$I_1\le Ce^{CKT}(T^pK^p+T^{2p}K^{2p})||f||_{B_{p,p}^{2+\alpha}}^p.$$
By (\ref{e7d}) and (\ref{e10}), for every $y\in \R^d$,
\begin{equation*}
||\nabla f \circ X_s^t(\cdot +y)-\nabla f \circ X_s^t(\cdot )||_{L^{\infty}}
\le Ce^{CKT}||f||_{B_{p,p}^{2+\alpha}}\big(|y|^{r(p)}1_{\{|y|\le 1\}}+
1_{\{|y|> 1\}}\big);
\end{equation*}
combining this with (\ref{e9aa}), we get $I_2\le CT^pK^pe^{CKT}||f||_{B_{p,p}^{2+\alpha}}^p$.
According to (\ref{e8}) and (\ref{e28}), $I_3\le CT^pK^p e^{CKT}||f||_{B_{p,p}^{2+\alpha}}^p$.
By (\ref{e12}), $I_4\le Ce^{CKT}||f||_{B_{p,p}^{2+\alpha}}^p$. Putting these estimates together
we may conclude that
\begin{equation}\label{l7.4}
[\nabla^2 (f\circ X_s^t)]_{B_{p,p}^{\alpha}}^p\le Ce^{CKT}(1+
T^pK^p+T^{2p}K^{2p}))||f||_{B_{p,p}^{2+\alpha}}^p.
\end{equation}
Since $2TK\le 1+T^2K^2$, estimates (\ref{l7.1}), (\ref{l7.2}), (\ref{l7.3}) and (\ref{l7.4}) imply
(\ref{e24a}).
\end{proof}

\begin{rem}
If $p \neq q$, estimate (\ref{e24c}) is no longer useful, we can not get the corresponding estimate
(\ref{e24a}) in space $B_{p,q}^{2+\alpha}$ by the same method in Lemma \ref{l7}.
\end{rem}

Now we prove the $B_{p,p}^{2+\alpha}$ bounds for the regular solution of
(\ref{e5}).
\begin{lem}\label{l8}
Suppose that $v\in \S(p,p,p',T)$
where $d<p<\infty$, $1<p'<\frac{d}{2}$, $T>0$. Let
$(X,Y,Z)$ be the unique solution of (\ref{e5}) with coefficient $v$ and initial condition
$u_0:=v(0)$, and let $g(t,x):=Y_t^t(x)$. Then for any $0<\alpha<1$,
\begin{equation}\label{e24d}
\sup_{t \in [0,T]}||g(t)||_{B_{p,p}^{2+\alpha}} \le C_1e^{C_1KT}||u_0||_{B_{p,p}^{2+\alpha}}
\big(1+T^{2}K^{2}\big)+C_1TK^2(1+T^{2}K^{2}),
\end{equation}
where  $K:=\sup_{t \in [0,T]}||v(t)||_{B_{p,p}^{2+\alpha}}$, and $C_1$ is a constant independent of
$K$, $\nu$, $T$, $p'$ and $v$.
\end{lem}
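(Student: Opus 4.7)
The plan is to represent $g(t,x)=Y_t^t(x)$ via the backward equation and then reduce everything to two applications of the composition estimate in Lemma \ref{l7} together with the Besov bound on $F_v$ in Lemma \ref{l1}. Concretely, by integrating the second equation of (\ref{e5}) from $t$ to $T$ and taking the (conditional) expectation at time $t$, the stochastic integral drops out (since $Z$ is adapted and the $dB$ integral is a martingale) and, using $X_t^t(x)=x$ together with $Y_T^t(x)=u_0(X_T^t(x))$, we obtain the representation
\begin{equation*}
g(t,x)=Y_t^t(x)=\E\bigl[u_0\circ X_T^t(x)\bigr]+\int_t^T \E\bigl[F_v(T-s,X_s^t(x))\bigr]\,ds.
\end{equation*}

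Next I would transfer the Besov norm inside the expectation and the time integral. Since the $B_{p,p}^{2+\alpha}$ (semi-)norm is built from $L^p$-type integrals in both $x$ and the shift variable $y$, Minkowski's integral inequality (applied in the $y$ and $x$ integrals with the probability measure and Lebesgue measure in $s$) yields
\begin{equation*}
\|g(t)\|_{B_{p,p}^{2+\alpha}}\le \E\bigl\|u_0\circ X_T^t\bigr\|_{B_{p,p}^{2+\alpha}}+\int_t^T \E\bigl\|F_v(T-s)\circ X_s^t\bigr\|_{B_{p,p}^{2+\alpha}}\,ds,
\end{equation*}
where the composition is understood componentwise (Lemma \ref{l7} applies to each scalar component).

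With this split, both terms fall inside the scope of Lemma \ref{l7}. For the first term, Lemma \ref{l7} with $f=u_0$ gives the contribution $C_1 e^{C_1 KT}(1+T^2K^2)\|u_0\|_{B_{p,p}^{2+\alpha}}$, which is exactly the first summand in the claimed estimate. For the second term, I apply Lemma \ref{l7} componentwise with $f=F_v(T-s)$ to get
\begin{equation*}
\E\bigl\|F_v(T-s)\circ X_s^t\bigr\|_{B_{p,p}^{2+\alpha}}\le C_1 e^{C_1 KT}(1+T^2K^2)\|F_v(T-s)\|_{B_{p,p}^{2+\alpha}},
\end{equation*}
and then Lemma \ref{l1} (third inequality of (\ref{e6})) combined with the Sobolev embedding $\|v\|_{W^{2,p}}\le C\|v\|_{B_{p,p}^{2+\alpha}}\le CK$ gives $\|F_v(T-s)\|_{B_{p,p}^{2+\alpha}}\le C_1 K^2$. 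Integrating in $s$ on $[t,T]$ produces a contribution bounded by $C_1 T K^2(1+T^2K^2)$ (absorbing the exponential $e^{C_1KT}$ into a renamed constant in the same line, or keeping it explicit as part of the first exponential factor), yielding (\ref{e24d}) after taking $\sup_{t\in[0,T]}$.

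The only slightly delicate point, and what I would view as the main obstacle to write carefully, is to ensure that $F_v(T-s)$ genuinely lies in $B_{p,p}^{2+\alpha}$ so that Lemma \ref{l7} is applicable: this is exactly what Lemma \ref{l1} provides (together with Remark \ref{r2.1} which removes the need for any $L^{p'}$ hypothesis), and it is the reason we need both hypotheses $v\in \S(p,p,p',T)$ and $d<p<\infty$. Everything else is a clean combination of the two earlier lemmas with Minkowski's inequality; no new estimates on the flow $X$ are required beyond those already established in Lemma \ref{l7}.
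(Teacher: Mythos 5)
Your proposal is correct and follows essentially the same route as the paper: the same representation $g(T-t,x)=\e\big(u_0(X_T^t(x))\big)+\int_t^T\e\big(F_v(T-s,X_s^t(x))\big)ds$ obtained by taking expectations in the backward equation, followed by pulling the $B_{p,p}^{2+\alpha}$ norm inside the expectation (the paper uses H\"older/Jensen on the $p$-th power where you invoke Minkowski's integral inequality, which is an equivalent step) and then applying Lemma \ref{l7} to $u_0$ and to $F_v(T-s)$ together with the third estimate of Lemma \ref{l1}. The point you flag about justifying that $F_v(T-s)\in B_{p,p}^{2+\alpha}\cap C_b^{\infty}$ so that differentiation can be exchanged with expectation is exactly the point the paper also addresses via elliptic regularity.
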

\begin{proof}

Since $v \in \S(p,p,p',T)$, $F_v(t)\in C_b^{\infty}(\R^d;\R^d)$ by the
standard theorem on the regularity
 for the solution of elliptic equation (for example, see \cite{GT}) and
according to the computations in
\cite{PP}, we know that, for every $0\le t \le T$, $l>1$,
\begin{equation*}
\e\Big(\sup_{t\le s \le T }\big(|Y_s^t(x)|^l+|Z_s^t(x)|^l\big)\Big)<\infty.
\end{equation*}
Then taking the expectation in (\ref{e5}) and noticing that
$Y_t^t(x)$ is non-random (see \cite{PP}), we have,
\begin{equation}\label{e12a}
g(T-t,x)=Y_t^t(x)=\e\big(u_0(X_T^t(x))\big)+\int_t^T \e\big(F_v(T-s,X_s^t(x))\big)ds.
\end{equation}
Since
$F_v(t) \in B_{p,p}^{2+\alpha}(\R^d;\R^d)$ $\bigcap C_b^{\infty}(\R^d;\R^d)$ for every $t$, we can change the order of
expectation and differential, applying H\"older's  inequality, to obtain
\begin{equation*}
||\e\big(F_v(T-s,X_s^t(\cdot ))\big)||_{B_{p,p}^{2+\alpha}}^p\le
\e\Big(||F_v(T-s,X_s^t(\cdot ))||_{B_{p,p}^{2+\alpha}}^p\Big).
\end{equation*}
Hence
by Lemma \ref{l1} and \ref{l7}, for every $0\le t \le s \le T$,
\begin{equation*}
\begin{split}
&||\e\big(F_v(T-s,X_s^t(.))\big)||_{B_{p,p}^{2+\alpha}}\\
&\le Ce^{CKT}(1+T^{2} K^2)||F_v(T-s)||_{B_{p,p}^{2+\alpha}}\le
CKe^{CKT}(1+T^{2}K^2)||v(T-s)||_{B_{p,p}^{2+\alpha}}.
\end{split}
\end{equation*}
Similarly, for every $0\le t \le T$,
\begin{equation*}
||\e\big(u_0(X_T^t(\cdot ))\big)||_{B_{p,p}^{2+\alpha}}\le
Ce^{CKT}(1+T^{2}K^2)||u_0||_{B_{p,p}^{2+\alpha}}.
\end{equation*}
Putting the above estimate into (\ref{e12a}),   conclusion
(\ref{e24d}) follows.

\end{proof}

Consider vector fields $v_m \in \S(p,p,p',T)$,
where $d<p<\infty$, $1<p'<\frac{d}{2}$, $0<T<1$, $\ \ m=1,2$.
Frow now on in this section,
let $(X_m,Y_m,Z_m)$, where $m=1,2$, be the solutions of (\ref{e5}) with
coefficients $v_m$ and initial condition $u_{0,m}:=v_m(0)$. We will present some estimates on the difference between
$X_1$ and $X_2$.
\begin{lem}\label{l9}
For every $f_1,f_2\in W^{1,p}(\R^d)$ (recall that $p>d$), $0\le t\le s \le T$, we have,
\begin{equation}\label{e31}
\begin{split}
&\int_{\R^d}|f_1( X_{s,1}^t(x))-f_2(X_{s,2}^t(x))|^p dx\\
&\le C_1||f_1-f_2||_{L^p}^p+  C_1T^{p}e^{C_1KT}||\nabla f_2||_{L^p}^p\sup_{t \in [0,T]}||v_1(t)-v_2(t)||_{L^{\infty}}^p\ a.s.,
\end{split}
\end{equation}
where $K:=\sup_{t \in [0,T],\ m=1,2}||\nabla v_m(t)||_{L^{\infty}}$, and $C_1$
is a constant independent of $\nu$, $K$, $T$, $p'$, $f_m$ and $v_m$.
\end{lem}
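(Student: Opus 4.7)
The plan is to split
\[
f_1(X_{s,1}^t(x))-f_2(X_{s,2}^t(x)) = (f_1-f_2)(X_{s,1}^t(x)) + \bigl(f_2(X_{s,1}^t(x))-f_2(X_{s,2}^t(x))\bigr)
\]
and estimate the two pieces separately in $L^p$, using $|a+b|^p\le 2^{p-1}(|a|^p+|b|^p)$. The first piece is handled immediately by (\ref{e10aa}) applied to the volume-preserving flow $X_{s,1}^t$ (legitimate since $\nabla\cdot v_1=0$), which yields $\int_{\R^d}|(f_1-f_2)(X_{s,1}^t(x))|^p\, dx = \|f_1-f_2\|_{L^p}^p$, accounting for the first term on the right hand side of (\ref{e31}).

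For the second piece, I would first obtain a uniform-in-$x$ bound on the difference process $D_s^t(x):=X_{s,1}^t(x)-X_{s,2}^t(x)$. Subtracting the two forward SDEs in (\ref{e5}) cancels the Brownian parts, leaving
\[
|D_s^t(x)| \le \int_t^s \bigl|v_1(T-r, X_{r,1}^t(x))-v_2(T-r, X_{r,2}^t(x))\bigr|\, dr.
\]
Adding and subtracting $v_1(T-r, X_{r,2}^t(x))$, using $\|\nabla v_1(T-r)\|_{L^\infty}\le K$, and invoking Gronwall's lemma gives $\sup_{x\in\R^d}|D_s^t(x)| \le CTe^{CKT}\sup_{t\in[0,T]}\|v_1(t)-v_2(t)\|_{L^\infty}$.

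Next, introduce $\Phi:=X_{s,1}^t\circ(X_{s,2}^t)^{-1}$, which is a volume-preserving diffeomorphism with $\|\Phi-\mathrm{id}\|_{L^\infty}=\sup_x|D_s^t(x)|$. Changing variables $y=X_{s,2}^t(x)$ rewrites the second piece as $\int_{\R^d}|f_2(\Phi(y))-f_2(y)|^p\, dy$. I would then invoke the Haj\l asz / maximal-function pointwise inequality
\[
|f_2(a)-f_2(b)| \le C|a-b|\bigl(M(|\nabla f_2|)(a) + M(|\nabla f_2|)(b)\bigr)\quad\text{a.e.,}
\]
valid for $f_2\in W^{1,p}(\R^d)$ with $p>1$. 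Combined with volume preservation of $\Phi$ and the strong $(p,p)$ Hardy--Littlewood maximal inequality $\|Mg\|_{L^p}\le C\|g\|_{L^p}$ (valid since $p>d\ge 3>1$), this delivers $\int_{\R^d}|f_2(\Phi(y))-f_2(y)|^p\, dy \le C\|\Phi-\mathrm{id}\|_{L^\infty}^p \|\nabla f_2\|_{L^p}^p$, from which (\ref{e31}) follows after combining with the estimate for the first piece.

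The main obstacle is precisely the last step: if one instead tries the naive mean value theorem along the segment $\gamma_\theta(x)=(1-\theta)X_{s,2}^t(x)+\theta X_{s,1}^t(x)$, one is left with $\int|\nabla f_2(\gamma_\theta(x))|^p\, dx$, which cannot be controlled by $\|\nabla f_2\|_{L^p}^p$ without a lower bound on $|\det\nabla\gamma_\theta|$; such a lower bound would require $|\nabla X_{s,1}^t-\nabla X_{s,2}^t|$ to be uniformly small, which is not available (and would also lose the independence of the constant from $\nu$ one obtains via Gronwall applied to $D_s^t$). The maximal-function inequality sidesteps this obstacle by transferring the $L^p$ regularity of $\nabla f_2$ through the volume-preserving map $\Phi$ rather than through the non-volume-preserving segment $\gamma_\theta$.
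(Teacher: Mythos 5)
Your proof is correct, but it takes a genuinely different route from the paper's on the key step. The paper handles the second piece by interpolating at the level of the \emph{drift}: it introduces the family $X_s^{t,r}$ driven by $(2-r)v_1+(r-1)v_2$ (equation (\ref{e33})), notes that this convex combination is still divergence free so that every $X_s^{t,r}$ is volume preserving, differentiates in $r$ to get $V_s^{t,r}$ satisfying (\ref{e33a}) with the same Gronwall bound $CTe^{CKT}\sup_t\|v_1(t)-v_2(t)\|_{L^\infty}$ that you obtain for $D_s^t$, and then writes $|f_2(X_{s,1}^t(x))-f_2(X_{s,2}^t(x))|^p\le\int_1^2|\nabla f_2(X_s^{t,r}(x))|^p|V_s^{t,r}(x)|^p\,dr$, so the troublesome integral $\int|\nabla f_2(X_s^{t,r}(x))|^p\,dx$ collapses to $\|\nabla f_2\|_{L^p}^p$ by volume preservation of the \emph{interpolated} flow --- exactly the mechanism that is unavailable for your straight segment $\gamma_\theta$, as you correctly diagnose. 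Your alternative replaces this interpolation by the composite volume-preserving map $\Phi=X_{s,1}^t\circ(X_{s,2}^t)^{-1}$ together with the Haj\l asz pointwise inequality and the Hardy--Littlewood maximal theorem (the a.e.\ exceptional set is harmless since $\Phi$ preserves null sets), which transfers the $L^p$ control of $\nabla f_2$ without ever integrating $\nabla f_2$ along a non-measure-preserving deformation. What each approach buys: the paper's argument is self-contained within the flow machinery it already uses (Kunita-type differentiability in the parameter $r$), stays entirely elementary (chain rule plus Gronwall), and its interpolation device is reused verbatim in Lemmas \ref{l10} and \ref{l4.1}--\ref{l4.3}, whereas your argument imports harmonic-analysis tools but dispenses with the $C^2$-approximation step (the paper first proves the bound for $f_m\in C^2\cap W^{1,p}$ and then passes to the limit by Fatou, while the Haj\l asz inequality applies directly to $W^{1,p}$ functions). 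Both yield the same constant structure $CT^pe^{CKT}$, independent of $\nu$.
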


\begin{proof}
We first assume $f_1,f_2 \in C^2(\R^d)\bigcap W^{1,p}(\R^d)$.
Since
\begin{equation*}
\begin{split}
&|f_1(X_{s,1}^t(x))-f_2( X_{s,2}^t(x))|\le
|f_1(X_{s,1}^t(x))-f_2( X_{s,1}^t(x))|+
|f_2(X_{s,1}^t(x))-f_2(X_{s,2}^t(x))|,
\end{split}
\end{equation*}
by (\ref{e10aa}), for every $0\le t \le s \le T$, we have
\begin{equation}\label{e16}
\int_{\R^d}|f_1(X_{s,1}^t(x))-f_2(X_{s,1}^t(x))|^p dx \le ||f_1-f_2||_{L^p}^p,\ a.s..
\end{equation}
For every $r\in [1,2]$, we define $X_{s}^{t,r}(x)$ to be the solution of following SDE,
\begin{equation}\label{e33}
\begin{cases}
&d X_{s}^{t,r}(x)=\sqrt{2\nu}dB_s-
\big((2-r)v_1(T-s,X_{s}^{t,r}(x))+(r-1)v_2(T-s,X_{s}^{t,r}(x))\big)ds,\\
&X_{t}^{t,r}(x)=x, \ \ 0\le t\le s\le T.
\end{cases}
\end{equation}
In particular $X_{s}^{t,r}(x)=X_{s,1}^t(x)$ when $r=1$ and
$X_{s}^{t,r}(x)=X_{s,2}^t(x)$ when $r=2$. Since
$\nabla \cdot \big((2-r)v_1+(r-1)v_2\big)=0$,  we have, for any
$h\in L^1(\R^d)$,
\begin{equation}\label{e31a}
\int_{\R^d} h(X_s^{t,r}(x))dx=\int_{\R^d}h(x)dx,\ \ \ \forall r\in [1,2], \ a.s..
\end{equation}
Since $v_m$ is regular enough,  the methods of \cite{KU} allow us to check that
there is a version of
$X_s^{t,r}(x)$ which is differentiable with $r$ and that $V_s^{t,r}(x):=\frac{d}{dr}X_s^{t,r}(x)$
satisfies the following SDE,
\begin{equation}\label{e33a}
\begin{cases}
&dV_s^{t,r}(x)=-\big((2-r)\nabla v_1(T-s,X_{s}^{t,r}(x))+(r-1)\nabla v_2(T-s,X_{s}^{t,r}(x))\big)V_s^{t,r}(x)ds\\
&+\big(v_1(T-s,X_{s}^{t,r}(x))-v_2(T-s,X_{s}^{t,r}(x))\big)ds,\\
&V_t^{t,r}(x)=0,\ \ \  0\le t\le s\le T.
\end{cases}
\end{equation}
By Grownwall lemma for every
$0\le t\le s \le T$, $r\in [1,2]$ and $x\in \R^d$,
\begin{equation}\label{e32}
|V_s^{t,r}(x)|\le
CTe^{CKT}\sup_{t \in [0,T]}||v_1(t)-v_2(t)||_{L^{\infty}} ,\ a.s..
\end{equation}
Since
\begin{equation*}
\begin{split}
|f_2(X_{s,1}^t(x))-f_2(X_{s,2}^t(x))|^p=|\int_1^2
\frac{d}{dr}\big(f_2(X_{s}^{t,r}(x))\big)dr|^p
\le \int_1^2 |\nabla f_2(X_{s}^{t,r}(x))|^p|V_s^{t,r}(x)|^pdr,
\end{split}
\end{equation*}
then, by (\ref{e31a}) and (\ref{e32}), we have,
\begin{equation}\label{e16a}
\begin{split}
& \int_{\R^d}|f_2(X_{s,1}^t(x))-f_2(X_{s,2}^t(x))|^p dx\\
&\le CT^p e^{CKT}\sup_{t \in [0,T]}||v_1(t)-v_2(t)||_{L^{\infty}}^p\int_1^2
\int_{\R^d} |\nabla f_2(X_{s}^{t,r}(x))|^p dx dr\\
&\le CT^p e^{CKT}\int_{\R^d}|\nabla f_2(x)|^p dx \sup_{t \in [0,T]}||v_1(t)-v_2(t)||_{L^{\infty}}^p, \ a.s..
\end{split}
\end{equation}
Combining (\ref{e16}) and (\ref{e16a}) together, we have (\ref{e31}).

For general $f_1,f_2 \in W^{1,p}(\R^d)$, there exist sequences
$\{f_{1,n}\}_{n=1}^{\infty}$, $\{f_{2,n}\}_{n=1}^{\infty}$
$\subseteq C^2(\R^d)\bigcap W^{1,p}(\R^d)$, such that
\begin{equation*}
\lim_{n \rightarrow \infty}\sup_{x \in \R^d}
|f_{i,n}(x)-f_i(x)|\le
\lim_{n \rightarrow \infty}||f_{i,n}-f_i||_{W^{1,p}}=0,\ \
\sup_n ||f_{i,n}||_{W^{1,p}}\le ||f_i||_{W^{1,p}},\ i=1,2,
\end{equation*}
then according to Fatou lemma,
\begin{equation*}
\begin{split}
&\int_{\R^d}|f_1( X_{s,1}^t(x))-f_2(X_{s,2}^t(x))|^p dx
=\int_{\R^d}\lim_{n \rightarrow \infty}|f_{1,n}( X_{s,1}^t(x))-f_{2,n}(X_{s,2}^t(x))|^p dx\\
&\le \liminf_{n \rightarrow \infty} \int_{\R^d}|f_{1,n}( X_{s,1}^t(x))-f_{2,n}(X_{s,2}^t(x))|^p dx\\
&\le C||f_1-f_2||_{L^p}^p+  CT^{p}e^{C_1KT}||\nabla f_2||_{L^p}^p\sup_{t \in [0,T]}||v_1(t)-v_2(t)||_{L^{\infty}}^p\ a.s..
\end{split}
\end{equation*}
\end{proof}

\begin{lem}\label{l10}
Suppose that $f_1, f_2 \in B_{p,p}^{1+\alpha}(\R^d)$ where $d<p<\infty$,
$0<\alpha<1$. We have,
for every $0\le t \le s \le T$,
\begin{equation}\label{e32a}
\begin{split}
& \int_{\R^d}\int_{\R^d}\frac{\Big|\big(f_1(X_{s,1}^t(x+y))-f_2(X_{s,2}^t(x+y))\big)-
\big(f_1(X_{s,1}^t(x))-f_2(X_{s,2}^t(x))\big)\Big|^p}{|y|^{3+p\alpha}} dxdy \\
& \le C_1e^{C_1KT}[f_1-f_2]_{B_{p,p}^{\alpha}}^p
+C_1T^{p}(1+K^{p})e^{C_1KT} ||f_2||_{B_{p,p}^{1+\alpha}}^p
\sup_{t \in [0,T]}||v_1(t)-v_2(t)||_{B_{p,p}^{1+\alpha}}^p\ a.s.,
\end{split}
\end{equation}
where $K:=\sup_{t \in [0,T],\ m=1,2}||v_m(t)||_{B_{p,p}^{2+\alpha}}$, and $C_1$
is a positive constant independent of $\nu$, $K$, $T$, $p'$, $f_m$, and $v_m$.
\end{lem}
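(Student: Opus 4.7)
The plan is to lift the decomposition-plus-interpolation argument of Lemma \ref{l9} to the Besov level, using the change-of-variables estimate (\ref{e12}) as the workhorse. First, I split the difference as
\begin{equation*}
\bigl[f_1(X_{s,1}^t(x+y)) - f_2(X_{s,2}^t(x+y))\bigr] - \bigl[f_1(X_{s,1}^t(x)) - f_2(X_{s,2}^t(x))\bigr] = A_1(x,y) + A_2(x,y),
\end{equation*}
where $A_1(x,y) := (f_1-f_2)(X_{s,1}^t(x+y)) - (f_1-f_2)(X_{s,1}^t(x))$ and $A_2(x,y) := \Psi(x+y) - \Psi(x)$ with $\Psi(x) := f_2(X_{s,1}^t(x)) - f_2(X_{s,2}^t(x))$. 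The term $A_1$ has exactly the shape treated in Lemma \ref{l7}: applying estimate (\ref{e12}) with $h = f_1 - f_2$ and the volume-preserving flow $X_{s,1}^t$ gives directly
\begin{equation*}
\int_{\R^d}\int_{\R^d} \frac{|A_1(x,y)|^p}{|y|^{d+\alpha p}}\, dx\, dy \le C e^{CKT} [f_1-f_2]_{B_{p,p}^{\alpha}}^p.
\end{equation*}

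For $A_2$, I would invoke the interpolating flow $X_s^{t,r}$ and its parameter-derivative $V_s^{t,r}$ from (\ref{e33})--(\ref{e33a}) to write $\Psi(x) = \int_1^2 \nabla f_2(X_s^{t,r}(x)) \cdot V_s^{t,r}(x)\, dr$. Via the product-difference identity (\ref{e22a}),
\begin{align*}
A_2(x,y) &= \int_1^2 V_s^{t,r}(x+y) \bigl[\nabla f_2(X_s^{t,r}(x+y)) - \nabla f_2(X_s^{t,r}(x))\bigr] dr \\
&\quad + \int_1^2 \nabla f_2(X_s^{t,r}(x)) \bigl[V_s^{t,r}(x+y) - V_s^{t,r}(x)\bigr] dr.
\end{align*}
The first summand is tame: extracting the pointwise bound $|V_s^{t,r}| \le CTe^{CKT}\sup_t ||v_1-v_2||_{L^{\infty}}$ from (\ref{e32}) and applying (\ref{e12}) with $h = \nabla f_2$ contributes at most $CT^p e^{CKT} ||f_2||_{B_{p,p}^{1+\alpha}}^p \sup_t ||v_1-v_2||_{L^\infty}^p$, already of the claimed form.

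The hard step, and the main obstacle, is the second summand, which requires a Besov-type control on $V_s^{t,r}(x+y) - V_s^{t,r}(x)$. I would shift the linear SDE (\ref{e33a}) by $y$, subtract, and apply Gr\"onwall to $W_\tau(y) := ||V_\tau^{t,r}(\cdot+y) - V_\tau^{t,r}(\cdot)||_{L^p}$. With the product-rule splitting of the coefficient $\mathcal{A} := (2-r)\nabla v_1 + (r-1)\nabla v_2$ and the forcing $\mathcal{B} := v_1 - v_2$, then integration against $|y|^{-d-\alpha p}\, dy$ followed by a second application of (\ref{e12}), the two driving contributions reduce to $[\nabla v_m \circ X_\tau^{t,r}]_{B_{p,p}^\alpha}$ (bounded by $CK$) and $[(v_1-v_2) \circ X_\tau^{t,r}]_{B_{p,p}^\alpha}$ (bounded by $C||v_1-v_2||_{B_{p,p}^{1+\alpha}}$ via the embedding $B_{p,p}^{1+\alpha} \hookrightarrow B_{p,p}^\alpha$). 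Extracting $||\nabla f_2||_{L^\infty} \le C||f_2||_{B_{p,p}^{1+\alpha}}$ through the Sobolev embedding (valid since $p > d$) and collecting all prefactors produces the claimed $CT^p(1+K^p)e^{CKT}$ bookkeeping. The delicate point is keeping the Gr\"onwall-plus-Minkowski interchange on $W_\tau(y)$ clean so that no spurious $K$ or $T$ factor slips in; this is the single place in the proof where the full $B_{p,p}^{1+\alpha}$ norm of $v_1 - v_2$ (rather than just its $L^\infty$ or H\"older norm) is indispensable.
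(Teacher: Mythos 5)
Your decomposition is exactly the paper's: the splitting into $A_1=(f_1-f_2)\circ X_{s,1}^t(\cdot+y)-(f_1-f_2)\circ X_{s,1}^t(\cdot)$ plus the two product terms coming from $\Psi(x)=\int_1^2\nabla f_2(X_s^{t,r}(x))V_s^{t,r}(x)\,dr$ is precisely the paper's $I_{s,1}^t+I_{s,2}^t$ and (\ref{e34}), and your treatment of $A_1$ (via (\ref{e12})) and of the summand carrying $\nabla f_2(X_s^{t,r}(\cdot+y))-\nabla f_2(X_s^{t,r}(\cdot))$ (via (\ref{e32}) and the analogue (\ref{e14}) of (\ref{e12}) for the interpolated flow) matches the paper's.

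The gap is in your ``hard step.'' You bound the increment of $V$ only in $L^p$ in $x$, i.e.\ through $W_\tau(y):=\|V_\tau^{t,r}(\cdot+y)-V_\tau^{t,r}(\cdot)\|_{L^p}$, and then you must separate the factor $\nabla f_2(X_s^{t,r}(x))$ by pulling out $\|\nabla f_2\|_{L^\infty}$. But the hypothesis is only $f_2\in B_{p,p}^{1+\alpha}(\R^d)$ with $p>d$ and \emph{arbitrary} $0<\alpha<1$; this gives $\nabla f_2\in B_{p,p}^{\alpha}$, which embeds into $L^\infty$ only when $\alpha>\frac{d}{p}$. For $0<\alpha\le\frac{d}{p}$ (a nonempty range since $\frac{d}{p}\in(0,1)$) the inequality $\|\nabla f_2\|_{L^\infty}\le C\|f_2\|_{B_{p,p}^{1+\alpha}}$ is false, and your estimate cannot be closed; no H\"older rebalancing of the product $\nabla f_2(X(x))\,[V(x+y)-V(x)]$ works with only an $L^p$-in-$x$ control of the second factor. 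The paper avoids this by proving a \emph{pointwise, uniform-in-$x$} bound (\ref{e13a}) on $\Gamma_s^{t,r}(x,y)=V_s^{t,r}(x+y)-V_s^{t,r}(x)$, namely $|\Gamma_s^{t,r}(x,y)|\le CT(1+KT)e^{CKT}\sup_t\|v_1(t)-v_2(t)\|_{B_{p,p}^{1+\alpha}}\big(|y|^{r(p)}1_{\{|y|\le1\}}+1_{\{|y|>1\}}\big)$, obtained by a pointwise Gr\"onwall on (\ref{e14a}) using the H\"older estimates (\ref{e7d}) for $\nabla v_r$ and for $v_1-v_2$, the Lipschitz bound (\ref{e34a}) on the flow, and the sup bound (\ref{e32}) on $V$. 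Since $r(p)>\alpha$, the $y$-integral of $|y|^{pr(p)-d-\alpha p}$ over $\{|y|\le1\}$ converges, and the $x$-integral only requires $\|\nabla f_2\|_{L^p}$ via volume preservation (\ref{e31a}). To repair your argument you should replace the $L^p$ Gr\"onwall for $W_\tau(y)$ by this pointwise one; as written, your proof only covers $\alpha>\frac{d}{p}$.
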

\begin{proof}
Since $C_c^{\infty}(\R^d)$ is dense in $B_{p,p}^{1+\alpha}(\R^d)$ (see \cite[Theorem 2.3.2(a)]{T}),
by the same approximation argument in the proof of Lemma
\ref{l9}, it suffices to show (\ref{e32a}) for every $f_1, f_2 \in C^2(\R^d)\bigcap B_{p,p}^{1+\alpha}(\R^d)$.

We have,
\begin{equation*}
\begin{split}
& \Big|\big(f_1(X_{s,1}^t(x+y))-f_2(X_{s,2}^t(x+y))\big)-
\big(f_1(X_{s,1}^t(x))-f_2(X_{s,2}^t(x))\big)\Big|\\
&\le  \Big|\big(f_1(X_{s,1}^t(x+y))-f_1(X_{s,1}^t(x))\big)-
\big(f_2(X_{s,1}^t(x+y))-f_2(X_{s,1}^t(x))\big)\Big|\\
&+
 \Big|\big(f_2(X_{s,1}^t(x+y))-f_2(X_{s,1}^t(x))\big)-
\big(f_2(X_{s,2}^t(x+y))-f_2(X_{s,2}^t(x))\big)\Big|\\
&:=I_{s,1}^t(x,y)+I_{s,2}^t(x,y).
\end{split}
\end{equation*}
Note that
\begin{equation*}
I_{s,1}^t(x,y)=\big(f_1-f_2\big)(X_{s,1}^t(x+y))-
\big(f_1-f_2\big)(X_{s,1}^t(x)),
\end{equation*}
by  property (\ref{e12}) in the proof of Lemma \ref{l7}, we have,
\begin{equation*}
\int_{\R^d}\int_{\R^d}\frac{|I_{s,1}^t(x,y)|^p}{|y|^{3+p\alpha}}dxdy \le Ce^{CKT}
||f_1-f_2||_{B_{p,p}^{\alpha}}, a.s..
\end{equation*}
For $r\in [0,1]$, let $X_s^{t,r}(x)$, $V_s^{t,r}(x)$ be the solutions of
SDEs (\ref{e33}) and (\ref{e33a}) respectively. Hence
\begin{equation}\label{e34}
\begin{split}
&|I_{s,2}^t(x,y)|^p=\Big|\int_1^2 \frac{d}{dr}
\big(f_2(X_{s}^{t,r}(x+y))-f_2(X_{s}^{t,r}(x))\big)dr\Big|^p\\
&\le C\int_1^2 \big|\nabla f_2(X_s^{t,r}(x+y))\big|^p
\big|V_s^{t,r}(x+y)-V_s^{t,r}(x)\big|^p dr\\
&+C\int_1^2\big|\nabla f_2(X_s^{t,r}(x+y))-\nabla f_2(X_s^{t,r}(x))\big|^p
\big|V_s^{t,r}(x)\big|^p dr.
\end{split}
\end{equation}
Let $\Gamma_s^{t,r}(x,y):=V_s^{t,r}(x+y)-V_s^{t,r}(x)$. Applying It\^o 's formula in
(\ref{e33a}), we derive that
\begin{equation}\label{e14a}
\begin{split}
&|\Gamma_s^{t,r}(x,y)|\le K \int_s^T |\Gamma_u^{t,r}(x,y)|du\\
&+\int_s^T \big|\nabla v_r(T-u,X_u^{t,r}(x+y))-\nabla v_r(T-u,X_u^{t,r}(x))\big|
\big|V_u^{t,r}(x)\big|du\\
&+\int_s^T \big|(v_2-v_1)(T-u,X_u^{t,r}(x+y))-
(v_2-v_1)(T-u,X_u^{t,r}(x))\big|du
\end{split}
\end{equation}
where
$v_r(s,x):=(2-r)\nabla v_1(s,x)+(r-1)\nabla v_2(s,x)$.

In the same way we proved (\ref{e8}) and (\ref{e10}), for every $r\in[1,2]$, $0\le t\le s\le T$,
$x,y \in \R^d$,
\begin{equation}\label{e34a}
|X_{s}^{t,r}(x+y)-X_{s}^{t,r}(x)|\le Ce^{CKT}|y|,\ \ a.s..
\end{equation}
Hence according to (\ref{e7d}) and (\ref{e34a}),
\begin{equation*}
\begin{split}
&\big|\nabla v_r(T-u,X_u^{t,r}(x+y))-\nabla v_r(T-u,X_u^{t,r}(x))\big|\\
&\le Ce^{CKT}||v_r(T-u)||_{B_{p,p}^{2+\alpha}}\big(|y|^{r(p)}
1_{\{|y|\le 1\}}+ 1_{\{|y|>1\}}\big)\\
&\le CKe^{CKT}\big(|y|^{r(p)}
1_{\{|y|\le 1\}}+ 1_{\{|y|>1\}}\big),\ a.s..
\end{split}
\end{equation*}
As in (\ref{e7d}), using the embedding theorem \cite[Theorem 2.8.1]{T} and (\ref{e34a}), we can show
that for every $r\in [1,2]$, $0\le t \le u \le T$,
\begin{equation*}
\begin{split}
&\big|(v_2-v_1)(T-u,X_u^{t,r}(x+y))-
(v_2-v_1)(T-u,X_u^{t,r}(x))\big|\\
&\le Ce^{CKT}||v_1(T-u)- v_2(T-u)||_{B_{p,p}^{1+\alpha}}
\big(|y|^{r(p)}1_{\{|y|\le 1\}}
+1_{\{|y|> 1\}}\big),\ a.s..
\end{split}
\end{equation*}
Combining the above estimate and (\ref{e32}) together into (\ref{e14a}), we have,
\begin{equation}\label{e13a}
\begin{split}
&|\Gamma_s^{t,r}(x,y)|
\le CT(1+KT)e^{CKT}\sup_{t \in [0,T]}|| v_1(t)-v_2(t)||_{B_{p,p}^{1+\alpha}}
\big(|y|^{r(p)}
1_{\{|y|\le 1\}}+ 1_{\{|y|>1\}}\big),\ a.s..
\end{split}
\end{equation}
Analogously to the proof of (\ref{e12}) in  Lemma \ref{l7}, we can show that
for every $r\in [1,2]$,
\begin{equation}\label{e14}
\begin{split}
&\int_{\R^d}\int_{\R^d}\frac{|\nabla f_2(X_s^{t,r}(x+y))-
\nabla f_2(X_s^{t,r}(x))|^p}{|y|^{d+\alpha p}}dxdy
\le Ce^{CKT}||f_2||_{B_{p,p}^{1+\alpha}}^p,\ a.s..
\end{split}
\end{equation}
Combining (\ref{e13a}) and (\ref{e14})  into (\ref{e34}) and using  properties (\ref{e31a}),
(\ref{e32}), we obtain
\begin{equation*}
\begin{split}
&\int_{\R^d}\int_{\R^d}\frac{|I_{s,2}^t(x,y)|^p}{|y|^{3+p\alpha}}dxdy\\
&\le CT^p(1+K^pT^p)e^{CKT} ||f_2||_{B_{p,p}^{1+\alpha}}^p
\sup_{t \in [0,T]}||v_1(t)-v_2(t)||_{B_{p,p}^{1+\alpha}}^p.
\end{split}
\end{equation*}
Since we assume that $0<T<1$, by putting the estimates for $I_{s,1}^t$ and $I_{s,2}^t$ together, we can prove
(\ref{e32a}).
\end{proof}

Based on Lemma \ref{l9} and \ref{l10}, we can prove the following difference estimate.
\begin{lem}\label{l11}
Suppose that  $f_1, f_2 \in B_{p,p}^{2+\alpha}(\R^d)$ for some $d<p<\infty$,
$0<\alpha<1$. Then for
every $0\le t \le s\le T$,
\begin{equation}\label{e15}
\begin{split}
&||f_1\circ X_{s,1}^t(\cdot )-f_2\circ X_{s,2}^t(\cdot )||_{B_{p,p}^{1+\alpha}}
\le C_1e^{C_1KT}(1+TK)||f_1-f_2||_{B_{p,p}^{1+\alpha}}\\
&+C_1T e^{C_1KT}\big(1+T^2K^{2}\big)||f_2||_{B_{p,p}^{2+\alpha}}\sup_{t \in [0,T]}
||v_1(t)-v_2(t)||_{B_{p,p}^{1+\alpha}}\ a.s,
\end{split}
\end{equation}
where $K:=\sup_{t \in [0,T],\ m=1,2}||v_m(t)||_{B_{p,p}^{2+\alpha}}$, and $C_1$
is a positive constant independent of $\nu$, $K$, $T$, $p'$, $f_m$ and $v_m$.
\end{lem}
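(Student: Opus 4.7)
Since $\|\cdot\|_{B_{p,p}^{1+\alpha}}=\|\cdot\|_{L^p}+\|\nabla\cdot\|_{L^p}+[\nabla\cdot]_{B_{p,p}^\alpha}$, I plan to control these three pieces in turn. The $L^p$ piece is a direct application of Lemma \ref{l9} to the pair $(f_1,f_2)$: since $p>d$, the embedding $B_{p,p}^{1+\alpha}\hookrightarrow L^\infty$ converts $\sup_{t\in[0,T]}\|v_1-v_2\|_{L^\infty}$ on the right of (\ref{e31}) into $\sup_{t\in[0,T]}\|v_1-v_2\|_{B_{p,p}^{1+\alpha}}$, and $\|\nabla f_2\|_{L^p}\le\|f_2\|_{B_{p,p}^{2+\alpha}}$; the resulting bound fits inside (\ref{e15}).

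For the remaining two pieces I use the chain rule $\nabla(f_i\circ X_{s,i}^t)=\nabla f_i(X_{s,i}^t)\cdot\nabla X_{s,i}^t$ together with the telescopic decomposition
\begin{equation*}
\nabla f_1(X_{s,1}^t)\nabla X_{s,1}^t - \nabla f_2(X_{s,2}^t)\nabla X_{s,2}^t = J_1+J_2+J_3,
\end{equation*}
where $J_1=\big[(\nabla f_1-\nabla f_2)\circ X_{s,1}^t\big]\nabla X_{s,1}^t$, $J_2=\big[\nabla f_2\circ X_{s,1}^t-\nabla f_2\circ X_{s,2}^t\big]\nabla X_{s,1}^t$, and $J_3=\nabla f_2(X_{s,2}^t)\big[\nabla X_{s,1}^t-\nabla X_{s,2}^t\big]$. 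The term $J_1$ contributes to the $\|f_1-f_2\|_{B_{p,p}^{1+\alpha}}$ part of (\ref{e15}): its $L^p$ bound uses volume preservation (\ref{e10aa}) and $|\nabla X_{s,1}^t|\le Ce^{CKT}$ from (\ref{e8}), while its $B_{p,p}^\alpha$ seminorm bound is obtained by repeating Step 5 of Lemma \ref{l7} applied to $\nabla f_1-\nabla f_2\in B_{p,p}^\alpha$, with $\nabla X_{s,1}^t$ playing the role of the multiplier. The term $J_2$ is handled by Lemma \ref{l9} and Lemma \ref{l10} with $(f_1,f_2)$ replaced by $(\nabla f_2,\nabla f_2)$; in (\ref{e32a}) the diagonal $[f_1-f_2]_{B_{p,p}^\alpha}$ contribution drops out, leaving exactly the required $\|f_2\|_{B_{p,p}^{2+\alpha}}\sup_{t\in[0,T]}\|v_1-v_2\|_{B_{p,p}^{1+\alpha}}$ term.

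The bulk of the analysis lies in $J_3$. After pulling out $\|\nabla f_2\|_{L^\infty}\le C\|f_2\|_{B_{p,p}^{2+\alpha}}$ via Sobolev embedding, one must bound $\nabla X_{s,1}^t-\nabla X_{s,2}^t$ both in $L^p$ and in $[\cdot]_{B_{p,p}^\alpha}$. I plan to push the interpolation device of Lemmas \ref{l9}--\ref{l10} one level higher in regularity: differentiating the $\nabla X$-equation in (\ref{e7}) along the path $r\mapsto X_s^{t,r}$ produces an ODE for $W_s^{t,r}:=\tfrac{d}{dr}\nabla X_s^{t,r}$ whose inhomogeneous terms involve $\nabla v_m$ at $X_s^{t,r}$, the differences $\nabla v_1-\nabla v_2$ and $\nabla^2 v_1-\nabla^2 v_2$, and $V_s^{t,r}$. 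Grownwall combined with (\ref{e32}) and (\ref{e31a}) then yields the $L^p$ estimate. For the Besov seminorm I mimic Step 4 of Lemma \ref{l7} by writing down the (deterministic, since the martingale part vanishes) ODE for $W_s^{t,r}(x+y)-W_s^{t,r}(x)$ and handling the incremental H\"older regularity of its coefficients through (\ref{e7d}) and (\ref{e34a}); the $(1+T^2K^2)$ factor in (\ref{e15}) originates here.

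The main obstacle is the bookkeeping in this last step. One needs only $\sup_{t\in[0,T]}\|v_1-v_2\|_{B_{p,p}^{1+\alpha}}$ on the right (not the $B_{p,p}^{2+\alpha}$-norm), so the term $\nabla^2 v_1-\nabla^2 v_2$ that naturally appears in the ODE for $W_s^{t,r}$ must be absorbed through the $B_{p,p}^{1+\alpha}$-seminorm of $v_1-v_2$ rather than a full derivative; this is exactly where the $\|f_2\|_{B_{p,p}^{2+\alpha}}$ factor in (\ref{e15}) enters multiplicatively (the extra regularity of $f_2$ compensates for the lost regularity of the velocity field). Keeping track of the H\"older-exponent cutoff $|y|^{r(p)}\mathbf{1}_{\{|y|\le 1\}}+\mathbf{1}_{\{|y|>1\}}$ from (\ref{e7a}) is essential to ensure convergence of the double integrals over $\R^d\times\R^d$ near $|y|=0$, and also explains the polynomial-in-$TK$ growth appearing in front of $\sup_{t\in[0,T]}\|v_1-v_2\|_{B_{p,p}^{1+\alpha}}$.
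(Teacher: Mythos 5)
Your proposal is correct in substance and follows the same skeleton as the paper's proof: the $L^p$/$W^{1,p}$ part via Lemma \ref{l9}, Sobolev embedding and the chain rule, and the $B_{p,p}^{\alpha}$-seminorm of the gradient via a product-type decomposition whose pieces are controlled by Lemma \ref{l10}, the flow estimates (\ref{e8}), (\ref{e10aa}), (\ref{e12}), (\ref{e28}), and Gronwall. Your three-term telescoping $J_1+J_2+J_3$ is a refactoring of the paper's two-term split (the paper keeps $J_1+J_2$ together as $|\nabla f_1(X_{s,1}^t)-\nabla f_2(X_{s,2}^t)|\,|\nabla X_{s,1}^t|$ and feeds it to Lemma \ref{l10} applied to the pair $(\nabla f_1,\nabla f_2)$ directly, which handles your $J_1$ and $J_2$ in one stroke; the increments of the product are organized through the elementary inequality (\ref{e22a}), producing the cross terms $I_{s,3}^t$, $I_{s,4}^t$ that you should not forget when you ``repeat Step 5 of Lemma \ref{l7}''). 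The genuine divergence is in $J_3$: to estimate $\Gamma_s^t=\nabla X_{s,1}^t-\nabla X_{s,2}^t$ and its spatial increments $\Psi_s^t(x,y)$, the paper simply subtracts the two ODEs in (\ref{e7}), applies Lemma \ref{l9} to the pair $(\nabla v_1,\nabla v_2)$ to control the inhomogeneous term, and closes with Gronwall --- see (\ref{e17a}) and the $\Psi_s^t$ inequality in Step 2 --- whereas you push the interpolation device of Lemmas \ref{l9}--\ref{l10} one level up, differentiating $\nabla X_s^{t,r}$ in $r$ and integrating $W_s^{t,r}$ over $r\in[1,2]$. Both routes work and give the same powers of $TK$; the paper's direct-difference route is lighter because it avoids introducing and estimating the new object $W_s^{t,r}$. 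One small inaccuracy in your plan: the ODE for $W_s^{t,r}$ does not actually contain $\nabla^2 v_1-\nabla^2 v_2$; differentiating $\nabla v_r(T-s,X_s^{t,r})$ in $r$ produces $\nabla^2 v_r(X_s^{t,r})V_s^{t,r}$ (a convex combination of second derivatives multiplied by the difference carried in $V_s^{t,r}$, bounded via (\ref{e32})) plus $(\nabla v_2-\nabla v_1)(X_s^{t,r})$, so the feared loss of a derivative on $v_1-v_2$ never occurs and no special absorption is needed beyond what (\ref{e12}) and (\ref{e7d}) already provide.
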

\begin{proof}
{\bf Step 1}:  By Lemma \ref{l9} and applying Sobolev embedding theorem,
for every $0\le t \le s \le T$,
\begin{equation}\label{l11.1}
\begin{split}
&\int_{\R^d}|f_1(X_{s,1}^t(x))-f_2( X_{s,2}^t(x))|^p dx\\
&\le C||f_1-f_2||_{L^p}^p+  CT^p e^{CKT}||f_2||_{W^{1,p}}^p\sup_{t \in [0,T]}||v_1(t)-v_2(t)||_{W^{1,p}}^p,\ a.s..
\end{split}
\end{equation}
Note that for $m=1,2$, $\nabla (f_m \circ X_{s,m}^t)(x)$
$=\nabla f_m (X_{s,m}^t(x)) \nabla X_{s,m}^t(x)$, we have,
\begin{equation}\label{e15a}
\begin{split}
& |\nabla (f_1 \circ X_{s,1}^t)(x)-\nabla (f_2 \circ X_{s,2}^t)(x)|\\
&\le Ce^{CKT}|\nabla f_1 ( X_{s,1}^t(x))-\nabla f_2 (X_{s,2}^t(x))|
+C||\nabla f_2||_{L^{\infty}}|\nabla X_{s,1}^t(x)-\nabla X_{s,2}^t(x)|,
\end{split}
\end{equation}
where we use the estimate (\ref{e8}).
Applying (\ref{l11.1}), we have,
\begin{equation*}
\begin{split}
&\int_{\R^d}|\nabla f_1(X_{s,1}^t(x))-\nabla f_2( X_{s,2}^t(x))|^p dx\\
&\le C||f_1-f_2||_{W^{1,p}}^p+ CT^p e^{CKT}||f_2||_{W^{2,p}}^p\sup_{t \in [0,T]}||v_1(t)-v_2(t)||_{W^{1,p}}^p,
\ a.s..
\end{split}
\end{equation*}
Let $\Gamma_s^t(x):=\nabla X_{s,1}^t(x)-\nabla X_{s,2}^t(x)$. Applying It\^o 's formula in the first equation in
(\ref{e7}) and using the estimate (\ref{e8}), we get that
\begin{equation*}
\begin{split}
& |\Gamma_s^t(x)| \le CK \int_t^s |\Gamma_r^t(x)| dr+
Ce^{CKT}\int_t^s |\nabla v_1(T-r,X_{r,1}^t(x))-\nabla v_2(T-r,X_{r,2}^t(x))| dr,
\end{split}
\end{equation*}
hence,  by applying (\ref{l11.1}) to $f_1=\nabla v_1$, $f_2=\nabla v_2$ and using Grownwall lemma, together with H\"older's inequality, we may obtain
\begin{equation}\label{e17a}
\int_{\R^d}|\Gamma_s^t(x)|^p dx\le CT^pe^{CKT}(1+T^pK^p)\sup_{t \in [0,T]}||v_1(t)-v_2(t)||_{W^{1,p}}^p\ a.s..
\end{equation}
Putting the above estimates into (\ref{e15a}) and noticing that $0<T<1$, we have,
\begin{equation}\label{l11.2}
\begin{split}
&\int_{\R^d}|\nabla (f_1 \circ X_{s,1}^t)(x)-\nabla (f_2 \circ X_{s,2}^t)(x)|^p dx\\
&\le Ce^{CKT}||f_1-f_2||_{W^{1,p}}^p+
CT^pe^{CKT}(1+T^pK^p)||f_2||_{W^{2,p}}^p \sup_{t \in [0,T]}||v_1(t)-v_2(t)||_{W^{1,p}}^p,\ a.s..
\end{split}
\end{equation}

{\bf Step 2}: By (\ref{e22a}), for every $0\le t \le s \le T$, $x,y \in \R^d$,
\begin{equation}\label{e17}
\begin{split}
&\Big|\big(\nabla (f_1\circ X_{s,1}^t)(x+y)-
\nabla (f_2\circ X_{s,2}^t)(x+y)\big)-
\big(\nabla (f_1\circ X_{s,1}^t)(x)-
\nabla (f_2\circ X_{s,2}^t)(x)\big)\Big|\\
&\le Ce^{CKT}\Big|
\big(\nabla f_1(X_{s,1}^t(x+y))-\nabla f_2(X_{s,2}^t(x+y))\big)-
\big(\nabla f_1(X_{s,1}^t(x))- \nabla f_2(X_{s,2}^t(x))\big)\Big|\\
&+C||\nabla f_1||_{L^{\infty}}\big|\big(\nabla X_{s,1}^t(x+y)-\nabla X_{s,2}^t(x+y)\big)
-\big(\nabla X_{s,1}^t(x)-\nabla X_{s,2}^t(x)\big)\big|\\
&+C|\nabla X_{s,1}^t(x+y)-\nabla X_{s,2}^t(x+y)|\big|\nabla f_2(X_{s,2}^t(x+y))-\nabla f_2(X_{s,2}^t(x))\big|\\
&+C|\nabla X_{s,2}^t(x+y)-\nabla X_{s,2}^t(x)|\big|\nabla f_1(X_{s,1}^t(x))-\nabla f_2(X_{s,2}^t(x))\big|\\
&:=\sum_{i=1}^4 I_{s,i}^t(x,y),
\end{split}
\end{equation}
where we have used estimate (\ref{e8}).
According to Lemma \ref{l10},
\begin{equation*}
\begin{split}
&\int_{\R^d}\int_{\R^d}\frac{|I_{s,1}^t(x,y)|^p}{|y|^{d+\alpha p}}dydx\\
&\le Ce^{CKT}||f_1-f_2||_{B_{p,p}^{1+\alpha}}+CT^p e^{CKT}(1+ K^p)
||f_2||_{B_{p,p}^{2+\alpha}}^p
\sup_{t \in [0,T]}||v_1(t)-v_2(t)||_{B_{p,p}^{1+\alpha}}^p\ a.s..
\end{split}
\end{equation*}
From (\ref{e7d}) and (\ref{e10}),
\begin{equation*}
\begin{split}
&\big|\nabla f_2(X_{s,2}^t(x+y))-\nabla f_2(X_{s,2}^t(x))\big|\\
&\le C||f_2||_{B_{p,p}^{2+\alpha}}\Big(
|X_{s,2}^t(x+y)-X_{s,2}^t(x)|^{r(p)}1_{\{|y|\le 1\}}+1_{\{|y|>1\}}\Big)\\
&\le Ce^{CKT}||f_2||_{B_{p,p}^{2+\alpha}}
\big(|y|^{r(p)}1_{\{|y|\le 1\}}+1_{\{|y|>1\}}\big),\  a.s.
\end{split}
\end{equation*}
Combining this with (\ref{e17a}), we deduce that
\begin{equation*}
\begin{split}
&\int_{\R^d}\int_{\R^d}\frac{|I_{s,3}^t(x,y)|^p}{|y|^{d+\alpha p}}dydx\\
&\le CT^p e^{CKT}(1+T^pK^p)||f_2||_{B_{p,p}^{2+\alpha}}^p
\sup_{t \in [0,T]}||v_1(t)-v_2(t)||_{W^{1,p}}^p\ a.s..
\end{split}
\end{equation*}
According to estimate (\ref{e28}) and Lemma \ref{l9},
\begin{equation*}
\begin{split}
&\int_{\R^d}\int_{\R^d}\frac{|I_{s,4}^t(x,y)|^p}{|y|^{d+\alpha p}}dydx\\
&\le CT^pK^pe^{CKT}\Big(||f_1-f_2||_{W^{1,p}}^p+
T^p||f_2||_{W^{2,p}}^p\sup_{t \in [0,T]}||v_1(t)-v_2(t)||_{W^{1,p}}^p\Big)\ a.s..
\end{split}
\end{equation*}
Let $\Psi_s^t(x,y):=(\nabla X_{s,1}^t(x+y)- \nabla X_{s,2}^t(x+y))-(\nabla X_{s,1}^t(x)-\nabla X_{s,2}^t(x))$. Applying
It\^o's formula in (\ref{e7}),
and estimate (\ref{e8}), we have
\begin{equation*}
\begin{split}
&|\Psi_s^t(x,y)| \le CK \int_t^s |\Psi_r^t(x,y)| dr\\
&+Ce^{CKT}\int_t^s \big|\big(\nabla v_1(T-r,X_{r,1}^t(x+y))-
\nabla v_2(T-r,X_{r,2}^t(x+y))\big)\\
&-\big(\nabla v_1(T-r,X_{r,1}^t(x))-
\nabla v_2(T-r,X_{r,2}^t(x))\big)\big| dr\\
&+ C\int_t^s|\nabla X_{r,1}^t(x+y)-\nabla X_{r,2}^t(x+y)|
\big|\nabla v_2(T-r,X_{r,2}^t(x+y))-\nabla v_2(T-r,X_{r,2}^t(x))\big| dr\\
&+C\int_t^s|\nabla X_{r,2}^t(x+y)-\nabla X_{r,2}^t(x)|
\big|\nabla v_1(T-r,X_{r,1}^t(x))-\nabla v_2(T-r,X_{r,2}^t(x))\big| dr.
\end{split}
\end{equation*}
As in the previous argument
first applying Grownwall lemma, then using H\"older inequality and
estimating the drift terms in the same way as for
$I_{s,i}^t(x,y),\ i=1,3,4$,  we may obtain
\begin{equation*}
\begin{split}
&\int_{\R^d}\int_{\R^d}\frac{|\Psi_s^t(x,y)|^p}{|y|^{d+\alpha p}}dydx\\
&\le CT^{p}e^{CKT}\big(1+T^{2p}K^{2p}\big)\sup_{t \in [0,T]}||v_1(t)-v_2(t)||_{B_{p,p}^{1+\alpha}}^p\ a.s,
\end{split}
\end{equation*}
where we have used the assumption that $0<T<1$.
Plugging the estimates obtained above together into (\ref{e17}), we obtain for every
$0\le t \le s \le T$ that
\begin{equation}\label{l11.3}
\begin{split}
&[f_1 \circ X_{s,1}^t-f_2 \circ X_{s,2}^t]_{B_{p,p}^{1+\alpha}}^p
\le Ce^{CKT}(1+T^pK^p)||f_1-f_2||_{B_{p,p}^{1+\alpha}}^p\\
&+CT^p e^{CKT}\big(1+T^{2p}K^{2p})\big)||f_2||_{B_{p,p}^{2+\alpha}}^p\sup_{t \in [0,T]}
||v_1(t)-v_2(t)||_{B_{p,p}^{1+\alpha}}^p\ a.s..
\end{split}
\end{equation}
Putting (\ref{l11.1}), (\ref{l11.2}) and
(\ref{l11.3}) together to conclude the proof.
\end{proof}


We are now in a position to prove the following difference estimate which will be used to prove the local existence.
\begin{lem}\label{l12}
Let $g_m(t,x):=Y_{t,m}^t(x)$, $m=1,2$, where
$d<p<\infty$, $0<\alpha<1$, $0\le t \le T$. Then
\begin{equation}\label{e35}
\begin{split}
&\sup_{t \in [0,T]}||g_1(t)-g_2(t)||_{B_{p,p}^{1+\alpha}}
\le  C_1e^{C_1KT}(1+T K)||u_{0,1}-u_{0,2}||_{B_{p,p}^{1+\alpha}}\\
&+CTK(1+T^3K^3)e^{CKT}\sup_{t \in [0,T]}||v_1(t)-v_2(t)||_{B_{p,p}^{1+\alpha}},
\end{split}
\end{equation}
where $K:=\sup_{t \in [0,T], m=1,2}||v_m(t)||_{B_{p,p}^{2+\alpha}}$, and $C_1$ is a
positive constant independent of
$K$, $\nu$, $T$, $p'$ and $v_m$.
\end{lem}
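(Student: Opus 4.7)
The plan is to derive the estimate by combining the probabilistic representation (\ref{e12a}) with the composition-difference estimate of Lemma \ref{l11} and the bilinear estimates for $F_v$ from Lemmas \ref{l1} and \ref{l3}. First I would write down the analogue of (\ref{e12a}) for both $m=1,2$ and subtract, obtaining
\begin{equation*}
\begin{split}
g_1(T-t,x)-g_2(T-t,x)&=\e\big[u_{0,1}(X_{T,1}^t(x))-u_{0,2}(X_{T,2}^t(x))\big]\\
&\quad+\int_t^T\e\big[F_{v_1}(T-s,X_{s,1}^t(x))-F_{v_2}(T-s,X_{s,2}^t(x))\big]\,ds.
\end{split}
\end{equation*}
As in the proof of Lemma \ref{l8}, H\"older's inequality lets the expectation pass inside the $B_{p,p}^{1+\alpha}$ norm, so it suffices to control the $B_{p,p}^{1+\alpha}$ norm of the two integrands pathwise and then integrate.

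For the initial-data term, I would apply Lemma \ref{l11} with $f_1=u_{0,1}$, $f_2=u_{0,2}$ (note $\|u_{0,2}\|_{B_{p,p}^{2+\alpha}}=\|v_2(0)\|_{B_{p,p}^{2+\alpha}}\le K$), producing the first summand of (\ref{e35}) plus a contribution of order $Te^{CKT}(1+T^2K^2)K\sup_t\|v_1(t)-v_2(t)\|_{B_{p,p}^{1+\alpha}}$. For the drift term, I would again invoke Lemma \ref{l11} but now with $f_1=F_{v_1}(T-s)$ and $f_2=F_{v_2}(T-s)$, giving for each fixed $s$
\begin{equation*}
\begin{split}
&\|F_{v_1}(T-s)\circ X_{s,1}^t-F_{v_2}(T-s)\circ X_{s,2}^t\|_{B_{p,p}^{1+\alpha}}\\
&\le Ce^{CKT}(1+TK)\|F_{v_1}(T-s)-F_{v_2}(T-s)\|_{B_{p,p}^{1+\alpha}}\\
&\quad+CTe^{CKT}(1+T^2K^2)\|F_{v_2}(T-s)\|_{B_{p,p}^{2+\alpha}}\sup_{t\in[0,T]}\|v_1(t)-v_2(t)\|_{B_{p,p}^{1+\alpha}}.
\end{split}
\end{equation*}
Here Lemma \ref{l3} bounds the first norm by $CK\|v_1(T-s)-v_2(T-s)\|_{B_{p,p}^{1+\alpha}}$, while Lemma \ref{l1} gives $\|F_{v_2}(T-s)\|_{B_{p,p}^{2+\alpha}}\le CK^2$.

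Integrating in $s\in[t,T]$ produces an extra factor $T$, so the drift term contributes at most
\begin{equation*}
CTKe^{CKT}(1+TK)\sup_t\|v_1(t)-v_2(t)\|_{B_{p,p}^{1+\alpha}}+CT^2K^2e^{CKT}(1+T^2K^2)\sup_t\|v_1(t)-v_2(t)\|_{B_{p,p}^{1+\alpha}}.
\end{equation*}
Adding this to the initial-data estimate and using $0<T<1$ to absorb lower-order factors of $T$ into $1+T^3K^3$ gives exactly (\ref{e35}); finally I would take the supremum over $t\in[0,T]$.

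The work is essentially bookkeeping: all the analytic difficulty has been done in Lemmas \ref{l1}, \ref{l3} and \ref{l11}. The only mildly delicate point is the step where the expectation is moved inside the $B_{p,p}^{1+\alpha}$ seminorm, but this is handled as in Lemma \ref{l8} by writing the seminorm as an $L^p$ integral over $(x,y)$ and applying Jensen/H\"older in the probability variable. The rest is a careful arrangement of the terms to match the powers of $K$ and $T$ that appear in the target bound.
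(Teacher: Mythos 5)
Your proposal is correct and follows essentially the same route as the paper: subtract the two representations of type (\ref{e12a}), pass the expectation inside the $B_{p,p}^{1+\alpha}$ norm by H\"older/Jensen, apply Lemma \ref{l11} to the terminal term (with $\|u_{0,2}\|_{B_{p,p}^{2+\alpha}}\le K$) and to the drift term with $f_m=F_{v_m}(T-s)$, then use Lemmas \ref{l1} and \ref{l3} to bound $\|F_{v_2}\|_{B_{p,p}^{2+\alpha}}\le CK^2$ and $\|F_{v_1}-F_{v_2}\|_{B_{p,p}^{1+\alpha}}\le CK\|v_1-v_2\|_{B_{p,p}^{1+\alpha}}$, and integrate in $s$. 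The bookkeeping of powers of $T$ and $K$ (using $a^2\le a+a^4$ with $a=TK$) matches the stated bound.
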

\begin{proof}
As (\ref{e12a}), for $m=1,2$, $0\le t \le T$,
\begin{equation}\label{e18a}
g_m(t)=Y_{t,m}^t(x)=\e\big(u_{0,m}(X_{T,m}^t(x))\big)+\int_t^T \e\big(F_{v_m}(T-s,X_{s,m}^t(x))\big)ds.
\end{equation}

According to the regularity theorem of elliptic equation,
$F_v(t) \in C_b^{\infty}(\R^d;\R^d)$ for every $t$, so that we can change the order of
expectation and differential, together with H\"older's  inequality, to obtain
\begin{equation}\label{e18}
\begin{split}
&||\e\big(F_{v_1}(T-s,X_{s,1}^t(\cdot ))\big)-\e\big(F_{v_2}(T-s,X_{s,2}^t(\cdot ))\big)||_{B_{p,p}^{1+\alpha}}^p\\
&\le\e\Big(||F_{v_1}(T-s,X_{s,1}^t(\cdot ))-F_{v_2}(T-s,X_{s,2}^t(\cdot ))||_{B_{p,p}^{1+\alpha}}^p\Big).
\end{split}
\end{equation}
By Lemma \ref{l11}, for every $0\le t \le s \le T$,
\begin{equation*}
\begin{split}
&||F_{v_1}(T-s,X_{s,1}^t(\cdot ))-F_{v_2}(T-s,X_{s,2}^t(\cdot ))||_{B_{p,p}^{1+\alpha}}^p\\
&\le Ce^{CKT}(1+T^pK^p)||F_{v_1}(T-s)-F_{v_2}(T-s)||_{B_{p,p}^{1+\alpha}}^p\\
&+ CT^p e^{CKT}(1+T^{2p}K^{2p})
\sup_{t \in [0,T]}||F_{v_2}(T-s)||_{B_{p,p}^{2+\alpha}}^p||v_1(t)-v_2(t)||_{B_{p,p}^{1+\alpha}}^p\ a.s.,
\end{split}
\end{equation*}
thus, according to Lemmas \ref{l1}, \ref{l3},
\begin{equation*}
\begin{split}
&||F_{v_1}(T-s,X_{s,1}^t(\cdot ))-F_{v_2}(T-s,X_{s,2}^t(\cdot ))||_{B_{p,p}^{1+\alpha}}^p\\
&\le CK^p e^{CKT}\big(1+T^{3p}K^{3p}  \big)\sup_{t \in [0,T]}||v_1(t)-v_2(t)||_{B_{p,p}^{1+\alpha}}^p\ a.s.
\end{split}
\end{equation*}
Putting this into (\ref{e18}),
\begin{equation*}
\begin{split}
&||\e\big(F_{v_1}(T-s,X_{s,1}^t(\cdot ))\big)-\e\big(F_{v_2}(T-s,X_{s,2}^t(\cdot ))\big)||_{B_{p,p}^{1+\alpha}}\\
&\le CKe^{CKT}\big(1+T^3K^3\big)\sup_{t \in [0,T]}||v_1(t)-v_2(t)||_{B_{p,p}^{1+\alpha}}.
\end{split}
\end{equation*}
Similarly, we can show that,
\begin{equation*}
\begin{split}
&||\e\big(u_{0,1}(X_{T,1}^t(\cdot ))\big)-\e\big(u_{0,2}(X_{T,2}^t(\cdot ))\big)||_{B_{p,p}^{1+\alpha}}\\
&\le Ce^{CKT}(1+T K)||u_{0,1}-u_{0,2}||_{B_{p,p}^{1+\alpha}}
+CT K(1+T^2 K^2)e^{CKT}\sup_{t \in [0,T]}||v_1(t)-v_2(t)||_{B_{p,p}^{1+\alpha}}.
\end{split}
\end{equation*}
Putting the above estimate  into (\ref{e18a}), we have (\ref{e35}).
\end{proof}

\begin{rem}\label{r1}
Note that in the estimate (\ref{e35}) for $||g_1(t)-g_2(t)||_{B_{p,p}^{1+\alpha}}$,
the difference term $||v_1-v_2||_{B_{p,p}^{1+\alpha}}$ is considered with the
$B_{p,p}^{1+\alpha}$ norm, and the uniformly control term is  with the  $B_{p,p}^{2+\alpha}$ norm (see
the definition of $K$), which is one order higher.
But in the estimate (\ref{e24d}) for $||g(t)||_{B_{p,p}^{2+\alpha}}$, only $B_{p,p}^{2+\alpha}$ norm is
involved, which is of the same order as the one of $g(t)$.

Since for $p>1$ and $r>1+\frac{d}{p}$, $||\nabla v(t)||_{L^{\infty}}
\le C||v(t)||_{B_{p,q}^r}$, repeating the procedure used above, we can also
obtain the following estimate with lower and higher order Besov norm.

For every $p>1$ and $r>1+\frac{d}{p}$,
\begin{equation}\label{e19aa}
\begin{split}
&\sup_{t \in [0,T]}||g_1(t)-g_2(t)||_{B_{p,p}^{\max(r-1,1)}}
\le  C_1e^{C_1KT}(1+TK^{[r]-1})||u_{0,1}-u_{0,2}||_{B_{p,p}^{\max(r-1,1)}}\\
&+CTK(1+T^{[r]+1}K^{[r]+1})e^{CKT}\sup_{t \in [0,T]}||v_1(t)-v_2(t)||_{B_{p,p}^{\max(r-1,1)}}
\end{split}
\end{equation}
and for $m=1,2$,
\begin{equation}\label{e19}
\sup_{t \in [0,T]}||g_m(t)||_{B_{p,p}^{r}} \le C_1(1+T^{[r]}K^{[r]})e^{C_1KT}||u_0||_{B_{p,p}^r}
+C_1TK^2(1+T^{[r]+1}K^{[r]+1}),
\end{equation}
where $r:=[r]+[r]^+$ with $[r]$ to be an integer and $0\le [r]^+<1$,
$K:=\sup_{t \in [0,T], m=1,2}||v_m (t)||_{B_{p,p}^{r}}$, and $C_1$ is a
positive constant independent of
$K$, $\nu$, $T$, $p'$, $v_m$. In particular, if $r$ is an integer, we use the notation
$||\cdot||_{B_{p,p}^{r}}$ to denote the Sobolev norm
$||\cdot||_{W^{r,p}}$. 
\end{rem}

Note that  estimate (\ref{e24d}) does not yield the regularity with respect to the time variable of
$g$, however, according to
Remark \ref{r1}, we  can show that $g\in C([0,T];
B_{p,p}^{2+\alpha}(\R^d;$ $\R^d))$.

\begin{cor}\label{c1}
Let $v$, $g(t)$ be as  in Lemma \ref{l8}.
Then for every $r>1+\frac{d}{p}$, we have $g\in C([0,T];
B_{p,p}^{r}(\R^d;\R^d))$, and the estimate (\ref{e19}) holds for $g$.
\end{cor}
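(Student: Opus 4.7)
The corollary has two assertions: the quantitative bound (\ref{e19}) and the time continuity $g\in C([0,T];B_{p,p}^{r})$. I would prove them in that order.

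For (\ref{e19}), my plan is to rerun the proof of Lemma \ref{l8} with the exponent $2+\alpha$ replaced everywhere by $r$, which is precisely what Remark \ref{r1} asserts can be done. The Sobolev embedding $B_{p,p}^{r}\hookrightarrow C^{1}_{b}$ (using $r>1+\frac{d}{p}$) still delivers $\|\nabla v\|_{L^\infty}\le C\|v\|_{B_{p,p}^{r}}$, so the Grönwall arguments (\ref{e8})--(\ref{e28}) for $\nabla X_s^t$ close without modification. The composition estimate of Lemma \ref{l7}, originally stated for two derivatives, generalizes to derivatives of order $[r]$ by iterating It\^o's formula on the higher Jacobians $\nabla^{j}X_s^t$; each iteration contributes a factor of $TK$, which is exactly what produces the prefactor $(1+T^{[r]}K^{[r]})$ in (\ref{e19}). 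The needed $B_{p,p}^{r}$-estimate for $F_v$ is obtained by iterating the singular-integral argument of Lemma \ref{l1}. Plugging everything into the representation (\ref{e12a}) yields (\ref{e19}).

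For the continuity, the crucial observation is that membership in $\S(p,p,p',T)$, by (\ref{e2a}), demands a uniform $B_{p,p}^{r'}$-bound for \emph{every} $r'>1$, not only for the given $r$. I would therefore first apply the just-established estimate (\ref{e19}) with $r$ replaced by $r+2$ to obtain $\sup_{t\in[0,T]}\|g(t)\|_{B_{p,p}^{r+2}}<\infty$: two extra derivatives of spatial regularity on $g$ for free. Since $v,F_v\in C([0,T];C_b^\infty)$ and $u_0=v(0)\in C_b^\infty$, an application of It\^o's formula to (\ref{e12a}) shows that $g$ is a classical solution, in the forward time $\tau=T-t$, of the linear parabolic equation
\begin{equation*}
\partial_\tau g=\nu\Delta g-v\cdot\nabla g+F_v,\qquad g(0,\cdot)=u_0.
\end{equation*}
Taking $B_{p,p}^{r}$ norms of both sides and estimating the three terms by (i) $\|\Delta g\|_{B_{p,p}^{r}}\le\|g\|_{B_{p,p}^{r+2}}$, (ii) the tame Besov product bound $\|v\cdot\nabla g\|_{B_{p,p}^{r}}\le C(\|v\|_{L^\infty}\|g\|_{B_{p,p}^{r+1}}+\|\nabla g\|_{L^\infty}\|v\|_{B_{p,p}^{r}})$ (all four factors uniformly bounded in $\tau$ by Step 1 applied at exponents $r+1$ and $r+2$), and (iii) the iterated form of Lemma \ref{l1} for $F_v$, yields $\sup_{\tau\in[0,T]}\|\partial_\tau g(\tau)\|_{B_{p,p}^{r}}<\infty$. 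Hence $\tau\mapsto g(\tau)$ is Lipschitz, a fortiori continuous, from $[0,T]$ into $B_{p,p}^{r}(\R^d;\R^d)$.

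The main obstacle I anticipate sits in Step 1 when $[r]\ge 3$: the Fa\`a di Bruno expansion of $\nabla^{[r]}(f\circ X_s^t)$ produces a sum over products of $\nabla^{j}X_s^t$'s for various $j\le[r]$, each demanding its own Grönwall estimate, and matching the resulting powers of $T$ and $K$ against those in (\ref{e19}) is technically tedious even though conceptually routine. Step 2, by contrast, is essentially immediate once Step 1 is available at the slightly higher exponent $r+2$, which is why the peculiar requirement in the definition (\ref{e2a}) of $\S(p,p,p',T)$---that $v$ be bounded in $B_{p,p}^{r'}$ for \emph{all} $r'>1$---is crucial.
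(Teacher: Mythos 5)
Your proposal is correct and follows essentially the same route as the paper: the bound (\ref{e19}) is obtained by rerunning Lemma \ref{l8} at general order as asserted in Remark \ref{r1}, and the time continuity is derived by writing $g$ as a classical solution of the linear parabolic equation $\partial_t g + v\cdot\nabla g = \nu\Delta g + F_v$ (the paper cites \cite[Theorem 3.2]{PP} for this), integrating in time, and bounding the integrand via the $B_{p,p}^{r+2}$ norm of $g$ and the $B_{p,p}^{r}$ norm of $v$, exactly as you do. Your observation that the all-orders bound built into the definition of $\S(p,p,p',T)$ is what licenses the use of (\ref{e19}) at exponent $r+2$ matches the paper's (implicit) reasoning.
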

\begin{proof}
Since $v \in \S(p,p,p',T)$, by the elliptic regularity
$F_v(t) \in C_b^{\infty}(\R^d;\R^d)$. Hence by \cite[Theorem 3.2]{PP}, $g(t,x):=Y_{T-t}^{T-t}(x) \in
C^1([0,T];C_b^{2}(\R^d;\R^d))$ satisfies the following parabolic PDE,
\begin{equation*}
\frac{\partial g}{\partial t}+v \cdot \nabla g = \nu \Delta g+F_v, \ \ g(0)=u_0.
\end{equation*}
Hence for every $0\le t \le s \le T$, $x \in \R^d$,
\begin{equation*}
g(s,x)-g(t,x)=\int_t^s \Big(\nu \Delta g(r,x)+F_v(r,x)-v(r,x) \cdot \nabla g(r,x)\Big) dr,
\end{equation*}
so that, according to (\ref{e19}) and Lemma \ref{l1},
\begin{equation*}
||g(s)-g(t)||_{B_{p,p}^{r}}\le C(s-t)\Big(1+\sup_{t \in [0,T]}
\big(||g(t)||_{B_{p,p}^{r+2}}^2+ ||v(t)||_{B_{p,p}^{r}}^2\big)\Big),
\end{equation*}
which implies that $g\in C([0,T];
B_{p,p}^{r}(\R^d;\R^d))$.
\end{proof}

As stated in Section 2, for every  $v \in \S(p,p,p',T)$, we can define
$\I_{\nu}(v)(t):=\mathbf{P}(Y_{T-t}^{T-t}(.))$ for every
$t \in [0,T]$, where $Y$ is the solution of (\ref{e5}) with coefficients $v$ and initial condition $u_0=v(0)$,
$\mathbf{P}$ is the Leray-Hodge projection to
divergence free vector fields.
By  Lemmas \ref{l8} and \ref{l12},  the extension property of the map $\I_{\nu}$ hold.

\begin{prp}\label{p1}
For every $p>1$, $r>1+\frac{d}{p}$, $T>0$, $u_0 \in B_{p,p}^{r}(\R^d;\R^d)$
satisfying that  $\nabla \cdot u_0=0$,
$\I_{\nu}$ can be extended to be a map
$\I_{\nu}: \B(u_0,T,p,r) \rightarrow \B(u_0,T,p,r)$, where
\begin{equation}\label{e24aa}
\begin{split}
&\B(u_0,T,p,r):=\Big\{v \in C([0,T];B_{p,p}^{r}(\R^d;\R^d));\ \ v(0,x)=u_0(x),\ \ \ \\
&\ \ \nabla \cdot v(t)=0,\ \forall\ t \in [0,T] \Big\},
\end{split}
\end{equation}
and for $v_1, v_2 \in \B(u_0,T,p,r) $, the estimate (\ref{e19aa}), (\ref{e19}) holds
with $g_1,g_2$ replaced by $\I_{\nu}(v_1)$,  $\I_{\nu}(v_2)$.
\end{prp}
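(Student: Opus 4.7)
The approach is extension by continuity via a regularization argument. Given $v \in \B(u_0,T,p,r)$, I would first build a sequence $v_n \in \S(p,p,p',T)$ with $v_n \to v$ in $C([0,T];B_{p,p}^r(\R^d;\R^d))$ and $v_n(0) = u_{0,n} \to u_0$ in $B_{p,p}^r$. A standard construction is to mollify $v(t)$ in space against a smooth compactly supported kernel, apply a spatial cut-off to provide the $L^{p'}$ integrability required by the definition of $\S(p,p,p',T)$, and then apply the Leray--Hodge projector $\mathbf{P}$ to restore incompressibility. Density of $C_c^\infty$ in $B_{p,p}^r$ (see \cite[Theorem 2.3.2(a)]{T}) and continuity of $\mathbf{P}$ on Besov spaces yield the desired convergence.

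Next I would pass to the limit using the estimates of Remark \ref{r1}. Because $K_n := \sup_{t \in [0,T]}\|v_n(t)\|_{B_{p,p}^r}$ remains bounded uniformly in $n$, estimate (\ref{e19}) gives the uniform bound $\sup_n \sup_{t \in [0,T]}\|\I_\nu(v_n)(t)\|_{B_{p,p}^r} < \infty$, while estimate (\ref{e19aa}) applied to $(v_n, v_m)$ yields
\[ \sup_{t \in [0,T]} \|\I_\nu(v_n)(t) - \I_\nu(v_m)(t)\|_{B_{p,p}^{\max(r-1,1)}} \le C \Big(\|u_{0,n}-u_{0,m}\|_{B_{p,p}^{\max(r-1,1)}} + \sup_{t \in [0,T]}\|v_n(t)-v_m(t)\|_{B_{p,p}^{\max(r-1,1)}}\Big), \]
so $\{\I_\nu(v_n)\}$ is Cauchy in $C([0,T]; B_{p,p}^{\max(r-1,1)}(\R^d;\R^d))$, and its limit defines $\I_\nu(v)$. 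The same inequality shows independence of the approximating sequence, and the divergence-free condition together with the identity $\I_\nu(v)(0) = u_0$ pass to the limit directly (the latter because $X_T^T(x)=x$, so $\I_\nu(v_n)(0) = \mathbf{P} u_{0,n} = u_{0,n}$).

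Finally I would check that $\I_\nu(v)$ actually lies in $\B(u_0,T,p,r)$ and that the stated estimates carry over. Pointwise in $t$, the $B_{p,p}^r$ norm of $\I_\nu(v)(t)$ is controlled by the uniform bound via lower semicontinuity of the norm under the weaker convergence, and the estimates (\ref{e19}), (\ref{e19aa}) for $\I_\nu(v_1), \I_\nu(v_2)$ with $v_1, v_2 \in \B(u_0,T,p,r)$ follow by applying them to approximations and passing to the limit. The hard part will be upgrading the convergence from $B_{p,p}^{\max(r-1,1)}$ to strong continuity in $B_{p,p}^r$: the most natural route is to identify $\I_\nu(v_n)$ as the classical solution to the linear parabolic equation $\partial_t g + v_n\cdot\nabla g = \nu\Delta g + F_{v_n}$ furnished by Corollary \ref{c1}, pass to the limit to obtain the analogous equation for $\I_\nu(v)$, and apply the time-regularity argument of that corollary to the limit.
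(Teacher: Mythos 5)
Your Steps 1--2 track the paper's proof of Proposition \ref{p1} closely: approximate $v$ by $v_n=\mathbf{P}\tilde v_n$ with $\tilde v_n\in C([0,T];C_c^{\infty})$, use (\ref{e19}) for the uniform $B_{p,p}^r$ bound and (\ref{e19aa}) for the Cauchy property in $C([0,T];B_{p,p}^{\max(r-1,1)})$, check independence of the approximating sequence, and recover $\I_{\nu}(v)\in L^{\infty}([0,T];B_{p,p}^{r})$ by lower semicontinuity of the norm (the paper does this with an interpolation step giving convergence in $W^{2,p}$ followed by Fatou's lemma). That part is sound and is essentially the paper's Steps 1 and 2.

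The gap is in your treatment of what you correctly identify as the hard part. The time-regularity argument of Corollary \ref{c1} bounds $\|g(s)-g(t)\|_{B_{p,p}^{r}}$ by $C(s-t)\big(1+\sup_{t}(\|g(t)\|_{B_{p,p}^{r+2}}^{2}+\|v(t)\|_{B_{p,p}^{r}}^{2})\big)$: the term $\nu\Delta g$ in the linear equation costs two derivatives, and the factor $\sup_{t}\|g(t)\|_{B_{p,p}^{r+2}}$ is finite there only because $v\in\S(p,p,p',T)$ is smooth. For the limit object, with $u_0$ merely in $B_{p,p}^{r}$ and $v$ merely in $C([0,T];B_{p,p}^{r})$, you have no control on $\|\I_{\nu}(v)(t)\|_{B_{p,p}^{r+2}}$ (no smoothing estimate is established, and none could be uniform near $t=0$), so ``apply the time-regularity argument of that corollary to the limit'' does not go through. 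This is precisely where the paper has to work hardest: Steps 3--6 of its proof establish continuity in $t'$ of $\nabla X_s^{t'}$, $\nabla^2 X_s^{t'}$ in $L^p$ and of the seminorm $[\nabla^2 X_s^{t'}-\nabla^2 X_s^{t}]_{B_{p,p}^{\alpha}}$, and then deduce $\lim_{t'\to t}[\hat v(t')-\hat v(t)]_{B_{p,p}^{2+\alpha}}=0$ from the explicit representation $\nabla^2 g(t,x)=\e\big(\nabla^2 u_0(X_T^t)(\nabla X_T^t)^2+\nabla u_0(X_T^t)\nabla^2 X_T^t\big)+\int_t^T\e(\cdots)\,ds$, using dominated convergence and uniform integrability. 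If you wish to avoid that flow-map analysis, you would have to replace your last step by a different argument --- e.g.\ the mild (Duhamel) formulation combined with the smoothing bound $\|e^{\tau\nu\Delta}\|_{B_{p,p}^{r-1}\to B_{p,p}^{r}}\lesssim(\nu\tau)^{-1/2}$ and strong continuity of the heat semigroup on $B_{p,p}^{r}$ --- which is not what you wrote and would itself need to be carried out in detail.
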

\begin{proof}
{\bf Step 1}: Since $C_c^{\infty}(\R^d)$ is dense in
$B_{p,p}^{r}(\R^d)$ (see \cite[Theorem 2.3.2(a)]{T}), for every $v\in \B(u_0,T,p,r)$, we can find a sequence
$\{\tilde v_n\}_{n=1}^{\infty}$, such that for every $n$,
$\tilde v_n \in C([0,T];C_c^{\infty}(\R^d;$ $\R^d))$ (however we can not assume that
$\nabla \cdot \tilde v_n(t)=0$), and
\begin{equation*}
\lim_{n \rightarrow \infty}\sup_{t \in [0,T]}||\tilde v_n(t)-v(t)||_{B_{p,p}^{r}}=0.
\end{equation*}
Let $v_n(t):=\mathbf{P}\tilde v_n(t)$:  we have $v_n \in \S(p,p,p',T)$ for every
$1<p'<\frac{d}{2}$. Also note that $\mathbf{P}$ is a singular integral operator,
which is bounded
in $B_{p,p}^{r}(\R^d)$ (see \cite{S} or the proof of Lemma \ref{l1}). Therefore
\begin{equation*}
\begin{split}
&\lim_{n \rightarrow \infty}\sup_{t \in [0,T]}||v_n(t)-v(t)||_{B_{p,p}^{r}}
=\lim_{n \rightarrow \infty}\sup_{t \in [0,T]}||\mathbf{P} \tilde v_n(t)-\mathbf{P}v(t)||_{B_{p,p}^{r}}\\
&\le C \lim_{n \rightarrow \infty}\sup_{t \in [0,T]}||\tilde v_n(t)-v(t)||_{B_{p,p}^{r}}=0,
\end{split}
\end{equation*}
in particular, for $u_{0,n}:=v_n(0)$,
\begin{equation}\label{e20}
\lim_{n \rightarrow \infty}||u_{0,n}-u_0||_{B_{p,p}^{r}}=0.
\end{equation}
Since $\mathbf{P}$ is a singular integral operator,
by (\ref{e19}) and (\ref{e19aa}), $\{\I_{\nu}(v_n)\}_{n=1}^{\infty}$ is a Cauchy sequence in
$C([0,T];B_{p,p}^{\max(r-1,1)}(\R^d;\R^d))$  and there is a
$\hat v \in C([0,T];B_{p,p}^{\max(r-1,1)}(\R^d;\R^d))$  such that,
\begin{equation}\label{e20a}
\sup_{n}\sup_{t \in [0,T]}||\I_{\nu}(v_n)(t)||_{B_{p,p}^{r}}<\infty,
\end{equation}
\begin{equation}\label{e21}
\lim_{n \rightarrow \infty}\sup_{t \in [0,T]}||\I_{\nu}(v_n)(t)-\hat v(t)||_{B_{p,p}^{\max(r-1,1)}}=0,
\end{equation}
from (\ref{e21}) we know that $\nabla \cdot \hat v(t)=0$ for every $t$. Since
$\I_{\nu}(v_n)(0)=u_{0,n}$ by definition, according to (\ref{e21}) and (\ref{e20}), we have
$\hat v(0)=u_0$. Also note that due to (\ref{e19aa}),
the limit $\hat v$ we have obtained above is independent of
the choice of approximation sequence $\{\tilde v_n\}$ to $v$, so $\I_{\nu}(v):=\hat v$ is well defined.
And by (\ref{e20a}), (\ref{e21}), we obtain immediately that for
every $v_1,v_2 \in \B(u_0,T,p,r)$, (\ref{e19aa}) holds with $g_1$ ,$g_2$ replaced by $\I_{\nu}(v_1)$, $\I_{\nu}(v_2)$.
In order to
prove that $\hat v \in \B(u_0,T,p,r)$, it only remains to show that
$\hat v \in C([0,T];B_{p,p}^{r}(\R^d;\R^d))$.

{\bf Step 2}: For simplicity, we only consider the case $p>d$ and $r=2+\alpha$
for some $0<\alpha<1$, the other case can be shown similarly.
Based on (\ref{e20a}), (\ref{e21}), by the interpolation inequality
\cite[Theorem 2.4.1(a)]{T}, we have,
\begin{equation*}
\lim_{n \rightarrow \infty}
\sup_{t \in [0,T]}||f_n(t)-\hat v(t)||_{W^{2,p}}=0,
\end{equation*}
where $f_n(t):=\I_{\nu}(v_n)(t)$, which implies that
$\hat v \in C([0,T];W^{2,p}(\R^d;\R^d))$.
For every fixed $t \in [0,T]$, taking a subsequence if necessary, we have
\begin{equation*}
\lim_{n \rightarrow \infty}\nabla^2 f_n(t)=\nabla^2 \hat v(t),\ a.e.,
\end{equation*}
where $a.e.$ means the almost everywhere with respect to the Lebesgue measure.
Hence by Fatou lemma,
\begin{equation*}
\begin{split}
& [\hat v(t)]_{B_{p,p}^{2+\alpha}}^p=
\int_{\R^d}\frac{||\nabla^2\hat v(t,\cdot+y)-
\nabla^2\hat v(t,\cdot)||_{L^p}^p}{|y|^{d+\alpha p}}dy\\
&\le \int_{\R^d}\frac{\liminf_{n \rightarrow \infty}||\nabla^2 f_n(t,\cdot+y)-
\nabla^2 f_n(t,\cdot)||_{L^p}^p}{|y|^{d+\alpha p}}dy\\
&\le \liminf_{n \rightarrow \infty}\int_{\R^d}\frac{||\nabla^2 f_n(t,\cdot+y)-
\nabla^2 f_n(t,\cdot)||_{L^p}^p}{|y|^{d+\alpha p}}dy\\
&\le \sup_n \sup_{t \in [0,T]}||f_n(t)||_{B_{p,p}^{2+\alpha}}^p<\infty,
\end{split}
\end{equation*}
so we obtain $\hat v \in L^{\infty}([0,T];B_{p,p}^{2+\alpha}(\R^d;\R^d))$, and for every
$v \in \B(u_0,T,p,r)$, (\ref{e19}) holds with $g$ replaced by $\I_{\nu}(v)$.

{\bf Step 3}: Note that
$v \in C([0,T];B_{p,p}^{2+\alpha}(\R^d;\R^d))$, the backward SDE (\ref{e5}) has a unique solution.
Let $(X_s^t,Y_s^t,Z_s^t)$, $(X_{s,n}^t,Y_{s,n}^t,Z_{s,n}^t)$ be the solution of (\ref{e5}) with the
coefficients $v$ and $v_n$ respectively.
Since $v(t) \in C_b^{1,r(p)}(\R^d;\R^d)$, we know that $X_s^t(\cdot)$ has a version which is differentiable
with respect to $x$, and the derivative $\nabla X_s^t$ satisfies the following,
\begin{equation}\label{e28c}
\begin{split}
& \nabla X_s^t(x)=\mathbf{I}+\int_t^s \nabla v(T-r,X_r^t(x))\nabla X_r^t(x)dr.
\end{split}
\end{equation}
Then following the same arguments of step 1 in the proof of Lemma \ref{l11}
(especially the one for (\ref{e17a})), we derive $\nabla X_s^t \in L_{loc}^p(\R^d;\R^d)$,
\begin{equation}\label{e21aaa}
\lim_{n \rightarrow \infty}\sup_{0\le t \le s \le T}||\nabla X_{s,n}^t-\nabla X_s^t||_{L^p}=0.
\end{equation}
By (\ref{e11a}), $\sup_n \sup_{0\le t \le s \le T}||\nabla^2 X_{s,n}^t||_{B_{p,p}^{\alpha}}<\infty$, which implies that
$\sup_{n,k} \sup_{0\le t \le s \le T}||\nabla X_{s,n}^t-\nabla X_{s,k}^t||_{B_{p,p}^{1+\alpha}}<\infty$ (note that
we only obtain $\nabla X_{s,n}^t- \nabla X_{s,k}^t\in L^p(\R^d;\R^d)$, but
we may not have $\nabla X_{s,n}^t \in L^p(\R^d;\R^d)$); combining this with
(\ref{e21aaa}) and by interpolation inequality, we obtain $\nabla X_s^t \in W_{loc}^{1,p}(\R^d;\R^d)$,
\begin{equation*}
\lim_{n \rightarrow \infty}\sup_{0\le t \le s \le T}||\nabla X_{s,n}^t-\nabla X_s^t||_{W^{1,p}}=0,
\end{equation*}
and we have the following expression,
\begin{equation}\label{e21a}
\begin{split}
& \nabla^2 X_s^t(x)=\int_t^s \nabla v(T-r,X_r^t(x))\nabla^2 X_r^t(x)dr+
\int_t^s \nabla^2 v(T-r,X_r^t(x))\big(\nabla X_r^t(x)\big)^2dr,
\end{split}
\end{equation}
then it is easy to show for every $0\le t \le s \le T$,
\begin{equation*}
\lim_{t' \rightarrow t}||\nabla^2 X_{s}^{t'}-\nabla^2 X_s^t||_{L^p}=0,
\end{equation*}
and following the same procedure as in (\ref{e11a}), we have
$\sup_{0\le t \le s \le T}||\nabla X_{s}^t||_{B_{p,p}^{1+\alpha}}<\infty$.

{\bf Step 4}: Now we want to show that for every $0\le t \le s \le T$,
\begin{equation}\label{e29c}
\begin{split}
& \lim_{t' \rightarrow t}\e\big([\nabla^2 X_s^{t'}-\nabla^2 X_s^t]_{B_{p,p}^{\alpha}}\big)=0.
\end{split}
\end{equation}
We first prove for every $h \in B_{p,p}^{\alpha}(\R^d)$, $0\le t \le s \le T$,
\begin{equation}\label{e28aa}
\begin{split}
& \lim_{t' \rightarrow t}[h \circ X_s^{t'}-h \circ X_s^t]_{B_{p,p}^{\alpha}}=0,\ a.s..
\end{split}
\end{equation}
Without  loss of generality, we assume $0\le t \le t' \le s \le T$. From (\ref{e5}),
let $\Gamma_s^{t,t'}(x):=X_s^t(x)-X_s^{t'}(x)$, $K:=\sup_{t \in [0,T]}||v(t)||_{B_{p,p}^{2+\alpha}}$,
it is easy to see that,
\begin{equation*}
\begin{split}
&|\Gamma_s^{t,t'}(x)| \le |X_{t'}^t(x)-x|+\int_{t'}^s K
|\Gamma_r^{t,t'}(x)|dr\\
&\le K|t'-t|+\sqrt{2\nu}|B_{t'}-B_t|+\int_{t'}^s K
|\Gamma_r^{t,t'}(x)|dr,
\end{split}
\end{equation*}
so by Grownwall lemma, for every $x \in \R^d$,
\begin{equation}\label{e26aa}
\begin{split}
& |\Gamma_s^{t,t'}(x)| \le Ce^{CKT}\big(K|t'-t|+\sqrt{2\nu}|B_{t'}-B_t|\big)
\end{split}
\end{equation}
If $h \in C_c^{\infty}(\R^d)$, since $|X_s^t(x+y)-X_s^t(x)|\le Ce^{CKT}|y|$,
\begin{equation*}
\begin{split}
& \Big|h(X_s^t(x+y))-h(X_s^t(x))-\big(
h(X_s^{t'}(x+y))-h(X_s^{t'}(x))\big)\Big|\\
&\le Ce^{CKT}||\nabla h||_{L^{\infty}}|y|1_{\{|y|\le 1\}}+C
||h||_{L^{\infty}}1_{\{|y|> 1\}}.
\end{split}
\end{equation*}
By (\ref{e26aa}) and the dominated convergence theorem,
\begin{equation*}
\begin{split}
& \lim_{t' \rightarrow t}\int_{\R^d}\int_{\R^d}\frac{\big|h(X_s^t(x+y))-h(X_s^t(x))-\big(
h(X_s^{t'}(x+y))-h(X_s^{t'}(x))\big)\big|^p}{|y|^{d+\alpha p}}dxdy=0,
\end{split}
\end{equation*}
which implies that for every $h \in C_c^{\infty}(\R^d)$,
\begin{equation}\label{e26c}
\lim_{t' \rightarrow t}[h \circ X_s^{t'}-h \circ X_s^t]_{B_{p,p}^{\alpha}}=0.
\end{equation}

Since $C_c^{\infty}(\R^d)$ is dense in $B_{p,p}^{\alpha}(\R^d)$, there exists a sequence
$\{h_n\}_{n=1}^{\infty}\subseteq C_c^{\infty}(\R^d)$, such that
$\lim_{n \rightarrow \infty}||h_n-h||_{B_{p,p}^{\alpha}}=0$. So by the argument in Step 3
of the proof of Lemma \ref{l7}, we obtain,
\begin{equation}\label{e21c}
\lim_{n \rightarrow \infty}\sup_{0\le t \le s \le T}
[h_n \circ X_s^t-h \circ X_s^t]_{B_{p,p}^{\alpha}}=0.
\end{equation}
Combining (\ref{e26c}) and (\ref{e21c}) together,
\begin{equation*}
\begin{split}
& \limsup_{t' \rightarrow t}[h \circ X_s^{t'}-h \circ X_s^t]_{B_{p,p}^{\alpha}}\\
&\le \limsup_{t' \rightarrow t}C[h_n \circ X_s^{t'}-h_n \circ X_s^t]_{B_{p,p}^{\alpha}}
+\limsup_{n \rightarrow \infty}C\sup_{0\le t \le s \le T}
[h_n \circ X_s^t-h \circ X_s^t]_{B_{p,p}^{\alpha}}=0,
\end{split}
\end{equation*}
hence (\ref{e28aa}) holds.

{\bf Step 5}:
Let $\Lambda_s^{t,t'}(x):=\nabla X_s^{t'}(x)-\nabla X_s^{t}(x)$, since
$|\nabla X_s^t(x)|\le e^{CKT}$, by (\ref{e28c}),
\begin{equation*}
\begin{split}
& |\Lambda_s^{t,t'}(x)|\le |\nabla X_{t'}^t(x)-\mathbf{I}|
+K\int_{t'}^s |\Lambda_r^{t,t'}(x)|dr\\
&+Ce^{CKT}\int_{t'}^s \big|\nabla v(T-r, X_{r}^{t'}(x))-
\nabla v(T-r, X_{r}^{t}(x))\big|dr\\
&\le CKe^{CKT}|t'-t|+K\int_{t'}^s |\Lambda_r^{t,t'}(x)|dr
+CKe^{CKT}\int_{t'}^s |\Gamma_r^{t,t'}(x)|^{r(p)}dr,
\end{split}
\end{equation*}
where we also use (\ref{e7d}), and $\Gamma_s^{t,t'}(x):=X_s^t(x)-X_s^{t'}(x)$. So by Grownwall lemma and
(\ref{e26aa}),
\begin{equation}\label{e29aa}
\begin{split}
&|\Lambda_s^{t,t'}(x)|
\le CKe^{CKT}\big((1+T)|t'-t|+\sqrt{2\nu}T|B_{t'}-B_t|\big).
\end{split}
\end{equation}

Let $\Theta_s^{t,t'}(x,y):=\nabla X_s^{t'}(x+y)-\nabla X_s^{t'}(x)-
\big(\nabla X_s^{t}(x+y)-\nabla X_s^{t}(x)\big)$, then by (\ref{e28c}), and applying
(\ref{e22a}), we get,
\begin{equation*}
\begin{split}
& |\Theta_s^{t,t'}(x,y)|\le |J_{t'}^t(x,y)|+\int_{t'}^s K |\Theta_r^{t,t'}(x,y)|dr\\
&+Ce^{CKT}\int_{t'}^s |I_{r}^{t,t'}(x,y)|dr+
CKe^{CKT}\int_{t'}^s |\Lambda_r^{t,t'}(x)|\big(|y|^{r(p)}1_{\{|y|\le 1\}}
+1_{\{|y|> 1\}}\big)dr\\
&+
CK\int_{t'}^s|\Gamma_r^{t,t'}(x+y)|^{r(p)}|J_r^t(x,y)|dr,
\end{split}
\end{equation*}
where we use (\ref{e7d}), and $I_{r}^{t,t'}(x,y)
:=\nabla v(T-r,X_r^{t'}(x+y))-\nabla v(T-r,X_r^{t'}(x))-
\big(\nabla v(T-r,X_r^{t}(x+y))-\nabla v(T-r,X_r^{t}(x))\big)$,
$J_r^t(x,y):=\nabla X_r^t(x+y)-\nabla X_r^t(x)$.
Combining all the estimate above, by Grownwall lemma,
for every $x,y \in \R^d$, $0\le t \le t' \le s \le T$,
\begin{equation*}
\begin{split}
\lim_{t' \rightarrow t}|\Theta_s^{t,t'}(x,y)|=0, \ a.s..
\end{split}
\end{equation*}
By (\ref{e7d}) and Grownwall lemma,
it is not difficult to check that,
\begin{equation*}
\begin{split}
& |I_{r}^{t,t'}(x,y)|\le CKe^{CKT}\big(|y|^{r(p)}1_{\{|y|\le 1\}}
+1_{\{|y|> 1\}}\big),\\
& |J_r^t(x,y)|\le CKe^{CKT}|r-t|\big(|y|^{r(p)}1_{\{|y|\le 1\}}
+1_{\{|y|> 1\}}\big),
\end{split}
\end{equation*}
so
\begin{equation*}
\sup_{0\le t \le t'\le s\le T}|\Theta_s^{t,t'}(x,y)|\le CKe^{CKT}
\big(1+T+|B_{t'}-B_t|\big)
\big(|y|^{r(p)}1_{\{|y|\le 1\}}
+1_{\{|y|> 1\}}\big),
\end{equation*}
hence by the dominated convergence theorem,
\begin{equation}\label{l12.1}
\lim_{t' \rightarrow t} \int_{\R^d}
\int_{\R^d}\frac{|\Theta_s^{t,t'}(x,y)|^p|\nabla^2 v(T-s,X_s^t(x)|^p}{|y|^{d+\alpha p}}dxdy=0.
\end{equation}

By the same procedure of Step 4, we have,
\begin{equation*}
\lim_{t' \rightarrow t} ||\nabla^2 v(T-s,X_s^{t'}(\cdot))-
\nabla^2 v(T-s,X_s^{t}(\cdot))||_{L^p}=0,
\end{equation*}
and by the estimate above for $J_r^t(x,y)$,
\begin{equation}\label{l12.2}
\begin{split}
&\lim_{t' \rightarrow t} \int_{\R^d}\int_{\R^d}\frac{|\nabla^2 v(T-s,X_s^{t'}(x))-
\nabla^2 v(T-s,X_s^{t}(x))|^p |J_s^t(x,y)|^p }{|y|^{d+\alpha p}}dx dy=0.
\end{split}
\end{equation}
By (\ref{e29aa}) and Step 3 in the proof of Lemma \ref{l7},
\begin{equation}\label{l12.3}
\begin{split}
& \lim_{t' \rightarrow t} \int_{\R^d}\int_{\R^d}\frac{|\nabla^2 v(T-s,X_s^{t'}(x+y))-
\nabla^2 v(T-s,X_s^{t'}(x))|^p |\Lambda_s^{t,t'}(x)|^p}{|y|^{d+\alpha p}}dxdy\\
&\le Ce^{CKT}[v(T-s)]_{B_{p,p}^{2+\alpha}}^p\lim_{t' \rightarrow t}\sup_{x \in \R^d}
|\Lambda_s^{t,t'}(x)|^p=0.
\end{split}
\end{equation}
By the inequality (\ref{e22a}), and according to (\ref{e28aa}),
(\ref{l12.1}), (\ref{l12.2}), (\ref{l12.3}),
\begin{equation}\label{e33aa}
\lim_{t' \rightarrow t} \big[\nabla^2 v(T-s,X_s^{t'}(\cdot))\nabla X_s^{t'}(\cdot)
-\nabla^2 v(T-s,X_s^{t}(\cdot))\nabla X_s^{t}(\cdot)\big]_{B_{p,p}^{\alpha}}=0.
\end{equation}

Applying  (\ref{e22a}) to the first term on the right hand side of (\ref{e21a}), in the same way as above to estimate
the associated terms, and using (\ref{e33aa}), Gronwall lemma and the dominated convergence theorem, we have,
\begin{equation*}
\lim_{t' \rightarrow t}[\nabla^2 X_s^{t'}-\nabla^2 X_s^{t}]_{B_{p,p}^{\alpha}}=0,
\end{equation*}
since
\begin{equation*}
\sup_{0\le t\le s \le T}\e\big([\nabla^2 X_s^{t}]_{B_{p,p}^{\alpha}}^p\big)<\infty,
\end{equation*}
$[\nabla^2 X_s^{t'}-\nabla^2 X_s^{t}]_{B_{p,p}^{\alpha}}$ is uniformly integrable, and
(\ref{e29c}) holds.

{\bf Step 6}: By the approximation procedure above, we know $\hat v=\I_{\nu}(v)$ satisfies that
$\hat v(t)=\mathbf{P}(g(t))$, where
\begin{equation*}
g(t,x)=\e\big(u_0(X_T^t(x))\big)+\int_t^T \e\big(F_v(T-s,X_s^t(x))\big)ds,
\end{equation*}
and for every $t \in [0,T]$, $g(t) \in B_{p,p}^{2+\alpha}(\R^d;\R^d)$,
\begin{equation*}
\begin{split}
&\nabla^2 g(t,x)=\e\big(\nabla^2 u_0(X_T^t(x))(\nabla X_T^t(x))^2+
\nabla u_0(X_T^t(x))\nabla^2 X_T^t(x) \big)\\
&+\int_t^T \e\big(\nabla^2 F_v(T-s,X_s^t(x))
(\nabla X_s^t(x))^2+ \nabla F_v(T-s,X_s^t(x))\nabla^2 X_s^t(x)\big)ds,
\end{split}
\end{equation*}
based on such expression, by (\ref{e28aa}), (\ref{e29c}) and by the same methods above, we can show that,
\begin{equation*}
\lim_{t' \rightarrow t}[\hat v(t')-\hat v(t)]_{B_{p,p}^{2+\alpha}}\le
C\lim_{t' \rightarrow t}[g(t')-g(t)]_{B_{p,p}^{2+\alpha}}=0.
\end{equation*}
Since we have shown that $\hat v \in C([0,T];W^{2,p}(\R^d;\R^d))$,
we obtain $\hat v \in C([0,T];B_{p,p}^{2+\alpha}(\R^d;\R^d))$ and the proof is finished.
\end{proof}

\begin{rem}\label{r3.1}
Since for $p>1$, $r>1+\frac{d}{p}$, $v \in C([0,T];B^{r}_{p,p}(\R^d))$ implies that
$v(t)$ and $F_v$ are Lipschitz continuous functions,
 there exists a unique solution $(X,Y,Z)$ for the forward-backward SDE
(\ref{e5}) with coefficients $v$ and initial value $u_0=v(0)$. If we let $g(t):=Y_{T-t}^{T-t}$,
by the approximation procedure in the proof of Proposition \ref{p1}, we know that $\I_{\nu}(v)(t)=\mathbf{P} (g(t))$.
\end{rem}

\begin{thm}\label{t1}
Suppose $p>1$, $r>1+\frac{d}{p}$ and
$u_0 \in  B_{p,p}^{r}(\R^d;\R^d)$ satisfing that $\nabla \cdot u_0=0$;
then there exists a
constant $T_0$, which depends only on $||u_0||_{B_{p,p}^{r}}$ (in particular, $T_0$ is independent of the viscosity
$\nu$), for which  there is a unique fixed
point $u$ of the map $\I_{\nu}$ in $\B(u_0,T_0,p,r)$, where $\B(u_0,T_0,p,r)$
is defined in (\ref{e24aa}).
\end{thm}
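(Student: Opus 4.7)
The proof will be a standard contraction-mapping argument built on the a priori bound (\ref{e19}) and the difference estimate (\ref{e19aa}) from Remark \ref{r1}, together with the extension in Proposition \ref{p1}. The plan is to work in a closed ball of $\B(u_0,T,p,r)$ measured in the higher norm $B_{p,p}^{r}$, but to obtain contraction in the lower norm $B_{p,p}^{\max(r-1,1)}$; this two-norm structure is forced on us because (\ref{e19aa}) requires one derivative less on the difference than the uniform control requires on each piece.

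First, I fix $M:=2C_1(1+\|u_0\|_{B_{p,p}^{r}})$ (with $C_1$ as in Proposition \ref{p1}) and, for $T\in(0,1)$ to be chosen, set
\begin{equation*}
\mathcal{M}_T:=\Big\{v\in\B(u_0,T,p,r):\sup_{t\in[0,T]}\|v(t)\|_{B_{p,p}^{r}}\le M\Big\}.
\end{equation*}
Applying (\ref{e19}) with $K\le M$, we see that $\sup_{t\in[0,T]}\|\I_{\nu}(v)(t)\|_{B_{p,p}^{r}}\le C_1(1+T^{[r]}M^{[r]})e^{C_1MT}\|u_0\|_{B_{p,p}^{r}}+C_1TM^{2}(1+T^{[r]+1}M^{[r]+1})$, so that by choosing $T_0=T_0(\|u_0\|_{B_{p,p}^{r}})$ small enough (independently of $\nu$, since $C_1$ is) the right-hand side is $\le M$. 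Hence $\I_\nu$ maps $\mathcal{M}_{T_0}$ into itself.

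Next I use (\ref{e19aa}) with $K\le M$ and $u_{0,1}=u_{0,2}=u_0$: for $v_1,v_2\in\mathcal{M}_{T_0}$,
\begin{equation*}
\sup_{t\in[0,T_0]}\|\I_\nu(v_1)(t)-\I_\nu(v_2)(t)\|_{B_{p,p}^{\max(r-1,1)}}\le C_1T_0M(1+T_0^{[r]+1}M^{[r]+1})e^{C_1MT_0}\sup_{t\in[0,T_0]}\|v_1(t)-v_2(t)\|_{B_{p,p}^{\max(r-1,1)}}.
\end{equation*}
Shrinking $T_0$ further (still depending only on $\|u_0\|_{B_{p,p}^{r}}$, hence only on $M$), the prefactor becomes $\le 1/2$, so $\I_\nu$ is a strict contraction on $\mathcal{M}_{T_0}$ with respect to the metric $d(v_1,v_2):=\sup_{t\in[0,T_0]}\|v_1(t)-v_2(t)\|_{B_{p,p}^{\max(r-1,1)}}$.

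The remaining issue, and the only non-routine point, is completeness of $(\mathcal{M}_{T_0},d)$: the ball is defined via the stronger norm while the metric is the weaker one. For a $d$-Cauchy sequence $\{v_n\}\subset\mathcal{M}_{T_0}$ with $d$-limit $v$, the divergence-free condition and the initial condition $v(0)=u_0$ pass to the limit because $\mathbf{P}$ and evaluation at $t=0$ are $d$-continuous. For the uniform $B_{p,p}^{r}$ bound, I would argue, for each fixed $t$, by choosing a subsequence converging a.e. with all needed derivatives and applying Fatou's lemma to the integral defining $[\cdot]_{B_{p,p}^r}$, exactly as in Step 2 of the proof of Proposition \ref{p1}; this yields $\sup_t\|v(t)\|_{B_{p,p}^{r}}\le M$. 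Continuity $v\in C([0,T_0];B_{p,p}^{r}(\R^d;\R^d))$ need not follow from this argument, but it is not needed to apply Banach's fixed point theorem; once a fixed point $u\in L^\infty([0,T_0];B_{p,p}^{r})\cap C([0,T_0];B_{p,p}^{\max(r-1,1)})$ is found, the identity $u=\I_\nu(u)$ together with Proposition \ref{p1} (which asserts $\I_\nu$ takes values in $\B(u_0,T_0,p,r)$) gives $u\in\B(u_0,T_0,p,r)$. Uniqueness in all of $\B(u_0,T_0,p,r)$ (not merely in the ball) follows by applying (\ref{e19aa}) to two hypothetical solutions and using Gronwall on $\sup_{t\le\tau}\|u(t)-\tilde u(t)\|_{B_{p,p}^{\max(r-1,1)}}$ on a possibly smaller interval, then iterating.
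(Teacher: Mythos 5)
Your overall strategy coincides with the paper's: use (\ref{e19}) to produce an invariant ball in the $B_{p,p}^{r}$ norm, use (\ref{e19aa}) with $u_{0,1}=u_{0,2}=u_0$ to obtain a factor-$\frac12$ contraction in the weaker norm $B_{p,p}^{\max(r-1,1)}$, and extract a fixed point from the resulting two-norm structure; your treatment of uniqueness beyond the ball (localize in time so that the constant in (\ref{e19aa}) drops below one, then iterate) is in fact more explicit than the paper's one-line appeal to the contraction estimate.

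The gap is in your final step. You concede that the $d$-limit $u$ of the iterates may only lie in $L^{\infty}([0,T_0];B_{p,p}^{r})\cap C([0,T_0];B_{p,p}^{\max(r-1,1)})$, so $(\mathcal{M}_{T_0},d)$ is not known to be complete, and you propose to recover $u\in\B(u_0,T_0,p,r)$ from ``the identity $u=\I_{\nu}(u)$ together with Proposition \ref{p1}''. But Proposition \ref{p1} defines $\I_{\nu}$ only on $\B(u_0,T_0,p,r)$, i.e.\ only for arguments already continuous in time with values in $B_{p,p}^{r}$: its Step 1 approximates $v$ uniformly in $t$ in the $B_{p,p}^{r}$ norm, which presupposes exactly that continuity. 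Hence $\I_{\nu}(u)$ is not yet defined for your $u$, and the identity you want to invoke cannot even be written down; the argument is circular. To close it you must either (i) do as the paper does: run the explicit iteration $u_{n+1}=\I_{\nu}(u_n)$ and prove that its limit belongs to $C([0,T_0];B_{p,p}^{r})$ before applying $\I_{\nu}$ to it (the $L^{\infty}$ bound by Fatou plus the time-continuity argument of Steps 3--6 of Proposition \ref{p1}), or (ii) first extend the domain of $\I_{\nu}$ to $L^{\infty}([0,T_0];B_{p,p}^{r})\cap C([0,T_0];B_{p,p}^{\max(r-1,1)})$ by redoing the approximation of Proposition \ref{p1} with convergence measured only in the weaker norm and uniform bounds in the stronger one --- which the estimates (\ref{e19}) and (\ref{e19aa}) do permit, but which is an additional argument you have not supplied.
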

\begin{proof}
For each $T>0$
and $v \in \B(u_0,T,p,r)$ with
$\sup_{t \in [0,T]}||v(t)||_{B_{p,p}^{r}}\le K$, by Proposition \ref{p1}, (\ref{e19}) holds with $g$ replaced by
$\I_{\nu}(v)$, i.e.,
\begin{equation*}
\sup_{t \in [0,T]}||\I_{\nu}(v)(t)||_{B_{p,p}^{r}}
\le  C(1+T^{[r]}K^{[r]})e^{CKT}||u_0||_{B_{p,p}^r}
+CTK^2(1+T^{[r]+1}K^{[r]+1}).
\end{equation*}
Note that the above bound in the right hand side tends to
$C||u_0||_{B_{p,p}^{r}}$ as $T$ tends to $0$ and $C$ is independent of $K$. Therefore we can find
constants $K_0>>||u_0||_{B_{p,p}^{r}}$ and $0<T_1<1$ which
only depend on $||u_0||_{B_{p,p}^{r}}$, such that for
every $0<T\le T_1$, $v \in \B(u_0,T,p,r)$ with
$\sup_{t \in [0,T]}||v(t)||_{B_{p,p}^{r}}\le K_0$,
\begin{equation*}
\sup_{t \in [0,T]}||\I_{\nu}(v)||_{B_{p,p}^{r}}\le K_0.
\end{equation*}
Fix such $K_0$; by Proposition \ref{p1}, there is a constant $0<T_0 \le T_1$ only depending on
$||u_0||_{B_{p,p}^{r}}$, such that for
 each $v_1,v_2 \in \B(u_0,T_0,p,r)$ with $||v_m||_{B_{p,p}^{r}}\le K_0,\ \ m=1,2$,
\begin{equation}\label{e23a}
\sup_{t \in [0,T_0]}||\I_{\nu}(v_1)(t)-\I_{\nu}(v_2)(t)||_{B_{p,p}^{r'}}
\le\frac{1}{2}\sup_{t \in [0,T_0]}||v_1(t)-v_2(t)||_{B_{p,p}^{r'}},
\end{equation}
where $r':=\max(r-1,1)$.
For every $v\in C([0,T];B_{p,p}^{r}(\R^d;\R^d))$, let
$||v||_{B_{p,p}^{r},T}:=\sup_{t \in [0,T]}
||v(t)||_{B_{p,p}^{r}}$. From the analysis above, we know that $\I_{\nu}$ can be viewed as a map
$\I_{\nu}:\B(u_0,T_0,p,r,K_0):\rightarrow \B(u_0,T_0,p,r,K_0)$, where
\begin{equation*}
\B(u_0,T_0,p,r,K_0):=\big\{v \in \B(u_0,T_0,p,r);\ \
||v||_{B_{p,p}^{r},T_0}\le K_0\big\},
\end{equation*}
and $\I_{\nu}$ is a contractive map with the $||.||_{B_{p,p}^{r'},T_0}$ norm.

Now we follow the argument in \cite[Theorem 2.1]{I}. Choose  $u_1 \in
\B(u_0,T_0,p,r,K_0)$ (for example, $u_1(t):=u_0$ for every $t\in [0,T_0]$),
and define $u_n:=\I_{\nu}(u_{n-1})$ inductively. Then due to (\ref{e23a}),
\begin{equation*}
||u_{n+1}-u_n||_{B_{p,p}^{r'},T_0}\le \frac{1}{2}
||u_{n}-u_{n-1}||_{B_{p,p}^{r'},T_0},
\end{equation*}
which implies that $\{u_n\}_{n=1}^{\infty}$
has a strong limit $u \in C([0,T_0];B_{p,p}^{r'}(\R^d;\R^d))$ in the
$||.||_{B_{p,p}^{r'},T_0}$ norm.
Since $\sup_n||u_n||_{B_{p,p}^{r},T_0}$ $\le K_0$, as the same procedure in the proof of Proposition \ref{p1},
we have $u \in C([0,T_0];B_{p,p}^{r}(\R^d;\R^d))$ and
$||u||_{B_{p,p}^{r},T_0}\le K_0$. So according to (\ref{e23a}),
\begin{equation*}
||\I_{\nu}(u_n)-\I_{\nu}(u)||_{B_{p,p}^{r'},T_0}\le
\frac{1}{2}||u_{n}-u||_{B_{p,p}^{r'},T_0},
\end{equation*}
which implies that $\I_{\nu}(u)=u$.
The uniqueness of the fixed point $u$ also follows from
(\ref{e23a}).
\end{proof}

Let $C^{1,2}([0,T]\times \R^d;\R^d)$ denote the set of vector fields
which are one order differentiable with respect to the time variable in $[0,T]$ and twice differentiable
with respect to the space variable in $\R^d$. If a solution $u$ of (\ref{e1}) belongs to
$C^{1,2}([0,T]\times \R^d;\R^d)$, it is a classical solution (differentiable in
time and space variables).
Now we prove that the fixed point $u$ of $\I_{\nu}$ is the solution of Navier-Stokes equation (\ref{e1}),
\begin{thm}\label{t2}
Suppose $p>1$, $r>1+\frac{d}{p}$, and
$u_0 \in  B_{p,p}^{r}(\R^d;\R^d)$ satisfing that $\nabla \cdot u_0=0$.
Then there exists a vector field
$u \in C([0,T_0];B_{p,p}^{r}(\R^d;\R^d))$ for
some constant $T_0>0$ which only depends on
$||u_0||_{B_{p,p}^{r}}$ and is independent of $\nu$,
such that $u$ is the unique strong solution of (\ref{e1})
in $C([0,T_0];B_{p,p}^{r}(\R^d;\R^d))$. In particular, if $r>2+\frac{d}{p}$, $u$ is a classical solution of (\ref{e1}).
\end{thm}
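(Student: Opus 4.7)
The plan is to show that the fixed point $u$ produced by Theorem \ref{t1} is in fact a strong solution of (\ref{e1}). Given $u\in\B(u_0,T_0,p,r)$ with $u=\I_\nu(u)$, consider the FBSDE (\ref{e5}) with coefficient $v=u$ and terminal condition $u_0(X_T^t)$; this has a unique solution $(X,Y,Z)$ since $r>1+d/p$ makes $u$ Lipschitz and (by Lemma \ref{l1}) $F_u$ Lipschitz in the spatial variable. Set $g(t,x):=Y_{T-t}^{T-t}(x)$. By Remark \ref{r3.1}, $u(t)=\mathbf{P}(g(t))$. If one knew both that $g$ satisfies the Pardoux-Peng PDE $\partial_t g+u\cdot\nabla g=\nu\Delta g+F_u$ with $g(0)=u_0$ and that $g=u$ (not merely $\mathbf{P}g=u$), then because $F_u=\nabla(NG_u)=-\nabla p$ by (\ref{e2}), $u$ would solve (\ref{e1}) directly. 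So the essential step is to prove $g=u$.

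To handle the limited regularity of the coefficient $v=u$, I would smooth the initial data: choose $u_{0,n}\in C_c^\infty(\R^d;\R^d)$ with $\nabla\cdot u_{0,n}=0$, $u_{0,n}\to u_0$ in $B_{p,p}^r$, and $\|u_{0,n}\|_{B_{p,p}^r}\le 2\|u_0\|_{B_{p,p}^r}$. Theorem \ref{t1}, applied with this uniform norm bound, produces fixed points $u_n\in\B(u_{0,n},T_0,p,r)$ on the \emph{same} interval $[0,T_0]$. Because elliptic regularity gives $F_v\in C_b^\infty$ whenever $v\in\S(p,p,p',T_0)$, the Picard iterates starting from $u_n^{(1)}(t):=u_{0,n}$ stay in $\S(p,p,p',T_0)$ (via Corollary \ref{c1} iteratively), and the contraction in the lower-order norm (\ref{e23a}) passes the smoothness to their limit $u_n$; hence $u_n$ is smooth. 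The classical derivation sketched after (\ref{e3}), namely It\^o's formula applied to $u_n(T-s,X_s^t)$ combined with Pardoux-Peng's Theorem 3.2, then certifies that $u_n$ is a classical solution of (\ref{e1}) on $[0,T_0]$. The difference estimate (\ref{e19aa}) applied to the pair $(u_n,u_m)$ together with $u_{0,n}\to u_0$ shows $\{u_n\}$ is Cauchy in $C([0,T_0];B_{p,p}^{\max(r-1,1)})$ with limit $u$, while the uniform $B_{p,p}^r$-bound plus the Fatou argument of Step 2 in the proof of Proposition \ref{p1} places $u$ in $L^\infty([0,T_0];B_{p,p}^r)$. Passing to the limit in (\ref{e1}) for each $u_n$ (strong convergence in the linear terms, and $u_n\cdot\nabla u_n\to u\cdot\nabla u$ in $L^p_{loc}$ for the nonlinearity via the uniform higher-order bound) exhibits $u$ as a strong solution in $C([0,T_0];B_{p,p}^r(\R^d;\R^d))$.

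For uniqueness, suppose $u'\in C([0,T_0];B_{p,p}^r(\R^d;\R^d))$ is another strong solution. Since $r>1+d/p$ ensures $u'$ is Lipschitz in $x$, the FBSDE (\ref{e5}) with $v=u'$ has a unique solution; applying It\^o's formula to the spatial mollification $u'_\varepsilon(T-s,X_s^t)$ and using the Navier-Stokes equation satisfied by $u'$ to identify the drift with $-F_{u'}$ in the limit lets one send $\varepsilon\to 0$ and conclude $Y_{T-t}^{T-t}=u'$; hence $u'=\mathbf{P}(u')=\I_\nu(u')$, and Theorem \ref{t1} forces $u'=u$. Finally, when $r>2+d/p$, the embedding $B_{p,p}^r\hookrightarrow C^2(\R^d)$ and the equation $\partial_t u=\nu\Delta u-u\cdot\nabla u-\nabla p$ make $\partial_t u$ continuous in both variables, so $u\in C^{1,2}$ is a classical solution. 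The main difficulty I anticipate is the smoothness propagation $u_n\in\S(p,p,p',T_0)$ for smooth data: the fixed-point iteration converges only in a lower-order norm, and one must leverage the Pardoux-Peng PDE together with elliptic regularity of $F_v$ at each step in order to carry the full $C_b^\infty$-regularity to the limit on the uniform lifespan $T_0$.
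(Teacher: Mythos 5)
Your overall skeleton (mollify the data, show the approximate fixed points solve the equation, pass to the limit, and get uniqueness by feeding a putative second solution back through the FBSDE) matches the paper's Steps 1--3, and your uniqueness paragraph is essentially the paper's argument. But there are two genuine gaps in the existence half.

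First, the claim that the limit $u_n$ of the Picard iterates is smooth does not follow from what you invoke. The contraction \eqref{e23a} holds only in the lower-order norm $B_{p,p}^{\max(r-1,1)}$, and the uniform-in-$m$ bound \eqref{e19} on the interval $[0,T_0]$ determined by $\|u_0\|_{B_{p,p}^r}$ is only a $B_{p,p}^{r}$ bound; to bound the iterates uniformly in $B_{p,p}^{l}$ for large $l$ one would need a lifespan shrinking with $\|u_{0,n}\|_{B_{p,p}^{l}}$, so smoothness cannot be ``passed to the limit'' on the fixed interval. The paper never claims $u_n\in\S(p,p,p',T_0)$: it exploits only that each \emph{iterate} $u_{n,m}$ is smooth, writes the mild formulation \eqref{e22c} for $g_{n,m}$, and lets $m\to\infty$ in the integral equation to obtain \eqref{e23} for $g_n\in C([0,T_1];B_{p,p}^r)$. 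Your subsequent appeal to It\^o's formula applied to $u_n(T-s,X_s^t)$ therefore rests on regularity you do not have.

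Second, and more importantly, you correctly isolate ``the essential step is to prove $g=u$'' and then never prove it. The derivation sketched after \eqref{e3} does not apply to a fixed point of $\I_{\nu}$: there one only knows $u_n=\mathbf{P}(g_n)$ where $g_n$ solves the \emph{linear} PDE $\partial_t g_n+u_n\cdot\nabla g_n=\nu\Delta g_n+F_{u_n}$, and Pardoux--Peng gives no information about $\nabla\cdot g_n$. The paper's Step 2 is precisely the missing argument: taking the divergence yields a transport-diffusion equation for $h_n=\nabla\cdot g_n$ with source $H_{u_n,g_n}=\sum_{i,j}\partial_i u_n^j\,\partial_j(g_n^i-u_n^i)$, the Leray--Hodge identity $v-\mathbf{P}v=\nabla N(\nabla\cdot v)$ plus elliptic regularity give $\|H_{u_n,g_n}(t)\|_{L^p}\le CK\|h_n(t)\|_{L^p}$, and Gronwall forces $h_n\equiv 0$, hence $g_n=u_n$. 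Without this closure your argument only shows that $\mathbf{P}g=u$, which is the definition of the fixed point, not that $u$ solves \eqref{e1}. A minor additional point: since $r>1+\frac{d}{p}$ permits $r<2$, the limit passage should be performed in the mild (Fujita--Kato) formulation \eqref{e23} rather than ``in \eqref{e1}'' literally, as $\Delta u$ need not make pointwise sense.
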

\begin{proof}

{\bf Step 1}: Suppose $u_0 \in B_{p,p}^{r}(\R^d;\R^d)$ for general $r>1+\frac{d}{p}$; as  in the proof
of Proposition \ref{p1}, we can obtain
a sequence $\{u_{0,n}\}\subseteq \bigcap_{l>1}B_{p,p}^{l}(\R^d;\R^d)$, such that
$\lim_{n \rightarrow \infty}||u_{0,n}-u_0||_{B_{p,p}^{r}}=0$ and $\nabla \cdot u_{0,n}=0$.
Recall the iteration procedure in the proof of Theorem \ref{t1}; we can find a constant $T_1$ independent of
$n$ and $\nu$,  such that for every $n$, there exist
vectors $\{u_{n,m}\}_{m=1}^{\infty} \subseteq \bigcap_{l>1} C([0,T_1];B_{p,p}^{l}(\R^d;\R^d))$,
$u_n \in C([0,T_1];B_{p,p}^{r}(\R^d;\R^d))$, such that
\begin{equation}\label{e20aa}
\begin{split}
& u_{n,m}(0)=u_{0,n},\ \ u_{n,m+1}=\I_{\nu}(u_{n,m}),\ \ \sup_{m,n}\sup_{t \in [0,T_1]}
||u_{n,m}(t)||_{B_{p,p}^{r}}<\infty,\\
&\lim_{m \rightarrow \infty}\sup_{t \in [0,T_1]}
||u_{n,m}(t)-u_n(t)||_{B_{p,p}^{r'}}=0,\ \ \I_{\nu}(u_n)=u_n,
\end{split}
\end{equation}
where $r':=\max(r-1,1)$.
Moreover, let $(X_{n,m},Y_{n,m},Z_{n,m})$ be the solution of (\ref{e5}) with coefficients
$v=u_{n,m}$ and initial condition $u_{0,n}$.  We define $g_{n,m}(t):=Y_{T_1-t,n,m}^{T_1-t}$ for
$t \in [0,T_1]$; since $u_{n,m}$ is regular enough, $g_{n,m}$ is the unique classical solution of
the following PDE,
\begin{equation}\label{e23c}
\frac{\partial g_{n,m}}{\partial t}+u_{n,m}\cdot \nabla g_{n,m}=\nu \Delta g_{n,m}
+F_{u_{n,m}},\ \ g_{n,m}(0)=u_{0,n}.
\end{equation}
Therefore
it is a strong solution in the following sense, for every $t\in [0,T_1]$,
\begin{equation}\label{e22c}
\begin{split}
& g_{n,m}(t)=e^{t \nu\Delta}u_{0,n}-\int_0^t \Big(e^{(t-s)\nu\Delta}\big(
u_{n,m}(s)\cdot \nabla g_{n,m}(s)-F_{u_{n,m}}(s)\big)\Big)ds.
\end{split}
\end{equation}
Suppose $T_1$ independent of $n$, $m$ small enough, by (\ref{e19aa}) and (\ref{e20aa}) we have,
\begin{equation*}
\sup_{t \in [0,T_1]}||g_{n,m+1}(t)-g_{n,m}(t)||_{B_{p,p}^{r'}}\le 2
\sup_{t \in [0,T_1]}||u_{n,m+1}(t)-u_{n,m}(t)||_{B_{p,p}^{r'}},
\end{equation*}
so by (\ref{e20aa}) and by the same argument in the proof of Proposition \ref{p1}, there is a
$g_n \in C([0,T_1];B_{p,p}^{r}(\R^d;\R^d))$, such that,
\begin{equation}\label{e22aa}
\begin{split}
&\lim_{m \rightarrow \infty}\sup_{t \in [0,T_1]}||g_{n,m}(t)-g_{n}(t)||_{B_{p,p}^{r'}}=0,
\ \ \sup_{n} \sup_{t \in [0,T_1]}||g_{n}(t)||_{B_{p,p}^{r}}<\infty.
\end{split}
\end{equation}
Letting $m \rightarrow \infty$ in (\ref{e22c})  we obtain, for every $t \in [0,T_1]$,
\begin{equation}\label{e23}
\begin{split}
& g_{n}(t)=e^{t \nu\Delta}u_{0,n}-\int_0^t \Big(e^{(t-s)\nu\Delta}\big(
u_{n}(s)\cdot \nabla g_{n}(s)-F_{u_{n}}(s)\big)\Big)ds.
\end{split}
\end{equation}

{\bf Step 2}:
Since $\nabla \cdot u_{n,m}(t)=0$, by
definition (\ref{e5a}) and standard approximation procedure (see Lemma \ref{l1}), for every $t$,
\begin{equation*}
\nabla \cdot F_{u_{n,m}}(t)=\nabla \cdot \big(\nabla N G_{u_{n,m}}(t)\big)=\Delta N G_{u_{n,m}}(t)=
G_{u_{n,m}}(t).
\end{equation*}
Let
\begin{equation*}
H_{u_{n,m},g_{n,m}}(t):=
\sum_{i,j=1}^d\big(
\partial_i u_{n,m}^j(t)\partial_j (g_{n,m}^i(t)-u_{n,m}^i(t))\big).
\end{equation*}

Let $h_{n,m}(t):=\nabla \cdot g_{n,m}(t)$, taking the divergence in (\ref{e23c}), so
for every $t \in [0,T_1]$,
\begin{equation*}
\begin{split}
\frac{\partial h_{n,m}}{\partial t}+u_{n,m}\cdot \nabla h_{n,m}=\nu \Delta h_{n,m}-
H_{u_{n,m},g_{n,m}},\ \ h_{n,m}(0)=0.
\end{split}
\end{equation*}
For every $0\le t \le s \le T_1$,
applying Ito's formula to $h_{n,m}\big(T_1-s, X_{s,n,m}^t(x)\big)$ and taking the
expectation, we get,
\begin{equation*}
h_{n,m}(T_1-t,x)=-\int_t^{T_1}\e\big(H_{u_{n,m},g_{n,m}}(T_1-s,X_{s,n,m}^t(x))\big)ds,
\end{equation*}
hence for every $0\le t \le T_1$,
\begin{equation*}
||h_{n,m}(t)||_{L^p}\le C\int_0^{t}||H_{u_{n,m},g_{n,m}}(s)||_{L^p}ds,
\end{equation*}
so by (\ref{e20aa}), let $m \rightarrow \infty$,
\begin{equation}\label{e22d}
||h_{n}(t)||_{L^p}\le C\int_0^{t}||H_{u_{n},g_{n}}(s)||_{L^p}ds,
\end{equation}
where $h_n(t):=\nabla \cdot g_n(t)$, and
\begin{equation*}
H_{u_{n},g_{n}}(t):=
\sum_{i,j=1}^d\big(
\partial_i u_{n}^j(t)\partial_j (g_{n}^i(t)-u_{n}^i(t))\big).
\end{equation*}

Since for every $v \in C_c^{\infty}(\R^d;\R^d)$, the Leray-Hodge projection has the expression
$v-\mathbf{P}v=\nabla N (\nabla \cdot v)$, for every $p>1$ we have,
\begin{equation*}
\begin{split}
& ||\nabla (v-\mathbf{P}v)||_{L^p}=||\nabla^2 N (\nabla \cdot v)||_{L^p}\\
&\le C||\Delta N (\nabla \cdot v)||_{L^p}=C||\nabla \cdot v||_{L^p},
\end{split}
\end{equation*}
where in the second step we use the elliptic regularity estimate (for example, see \cite{GT})
$||\nabla^2 f ||_{L^p}\le C||\Delta f||_{L^p}$ for every $f\in C_c^{\infty}(\R^d)$. Note that
$\mathbf{P}(g_n(t))=u_n(t)$ as $\mathbf{P}(g_{n,m}(t))=u_{n,m+1}(t)$, by the standard approximation argument,
\begin{equation*}
||\nabla (u_n(t)-g_n(t))||_{L^p}\le C||\nabla \cdot g_n(t)||_{L^p}
=C||h_n(t)||_{L^p},
\end{equation*}
which implies
\begin{equation}\label{e22}
||H_{u_n,g_n}(t)||_{L^p}\le CK||h_n(t)||_{L^p},
\end{equation}
where $K:=\sup_n\sup_{t \in [0,T]}\big(||\nabla u_n(t)||_{L^{\infty}}\big)$.
According to (\ref{e22d}), (\ref{e22}) and Grownwall lemma, we derive
$||h_n(t)||_{L^p}=0$ for every $t\in [0,T_1]$. So
$\nabla \cdot g_n(t)=0$ and $g_n(t)=\mathbf{P}g_n(t)=u_n(t)$.

Since $\I_{\nu}(u_n)=u_n$, by (\ref{e19aa}) and (\ref{e20aa}), there is a $0<T_0\le T_1$
independent of $\nu$, $n$ and a vector $u \in C([0,T_0];B_{p,p}^{r}(\R^d;\R^d))$, such that,
\begin{equation*}
\lim_{n \rightarrow \infty}\sup_{t \in [0,T_0]}||u_{n}(t)-u(t)||_{B_{p,p}^{r'}}
\le 2\lim_{n \rightarrow \infty}||u_{0,n}-u_0||_{B_{p,p}^{r'}}=0,
\end{equation*}
so taking the limit $n \rightarrow \infty$ in (\ref{e23}) we have, for every
$t \in [0,T_0]$,
\begin{equation*}
\begin{split}
& u(t)=e^{t \nu\Delta}u_{0}-\int_0^t \Big(e^{(t-s)\nu\Delta}\big(
u(s)\cdot \nabla u(s)-F_{u}(s)\big)\Big)ds\\
&=e^{t \nu\Delta}u_{0}-\int_0^t \Big(e^{(t-s)\nu\Delta}\big(\mathbf{P}(
u(s)\cdot \nabla u(s))\big)\Big)ds.
\end{split}
\end{equation*}
Hence $u \in C([0,T_0];B_{p,p}^{r}(\R^d;\R^d))$
is the strong solution of
(\ref{e1}) introduced in \cite{FK}.
In particular, if $r>2+\frac{d}{p}$,
by Sobolev embedding theorem, $u$ is a classical solution.

{\bf Step 3}:
Suppose $r>1+\frac{d}{p}$ and $u \in C([0,T_0];B_{p,p}^{r}(\R^d;\R^d))$
 is a strong solution of (\ref{e1}).
Without loss of generality, we assume $T_0$ to be small enough.
Note that under such regularity condition, the backward SDE (\ref{e5})
with coefficients $u$ and initial condition $u_0$ has a unique solution
$(X,Y,Z)$. Let $g(t):=Y_{T_0-t}^{T_0-t}$ for $t \in [0,T_0]$. By (\ref{e19aa}) and the approximation procedure above,
$g \in C([0,T_0];B_{p,p}^{r}(\R^d;\R^d))$
 is the strong solution of following (linear) PDE,
\begin{equation}\label{e1a}
\frac{\partial g}{\partial t}+ u \cdot \nabla g = \nu \Delta g +F_u, \ \ g(0)=u_0.
\end{equation}
 On the other hand, since $u$ is a strong solution of
(\ref{e1}), $u$ is also a strong solution of (\ref{e1a}). Due to the uniqueness of the strong solution
of linear PDE (\ref{e1a}) in such function space, we must have $g(t)=u(t)$, so $u=g=\I_{\nu}(u)$ (see Remark
\ref{r3.1}), hence it is a
fixed point of $\I_{\nu}$ in $\B(u_0,T_0,p,r)$ and it
is unique according to Theorem \ref{t1}.

\end{proof}

\begin{rem}
As we will see in Section 5, in order to prove Theorem \ref{t1}
and \ref{t2}, the estimate for $W^{1,p}$ norm of the difference  is sufficient. Here we prove
the estimate for $B_{p,p}^{1+\alpha}$ norm in Lemma \ref{l12} and, based on such estimate, we can obtain a more
accurate rate for the limit as $\nu \rightarrow 0$ in Section 4.
\end{rem}

\section{The limit to the Euler equation as $\nu \rightarrow 0$ }
From the analysis in Section 3, we know that the maximal time interval $[0,T_0]$ for the local existence
of a solution for (\ref{e1}) is independent of the viscosity $\nu$.
Although when $\nu=0$, the backward SDE in (\ref{e5}) makes no sense, the function
$g(t)$ is still well defined by (\ref{e12a}) since $X_s^t$ here is the solution of an ODE.
Furthermore, the proof of Proposition \ref{p1}, Theorem \ref{t1}, \ref{t2} can still
be applied to the case where $\nu=0$, and for $p>1$, $r>1+\frac{d}{p}$, the  local existence theorem
in Besov space $B_{p,p}^{r}(\R^d;\R^d)$ for the Euler equation (equation (\ref{e1})
with $\nu=0$) can be derived. In this section, we will study the limit behaviour of the solution
of (\ref{e1}) as $\nu \rightarrow 0$.

For every $p>1$, $r>1+\frac{d}{p}$,
$T>0$, $u_0 \in B_{p,p}^{r}(\R^d;\R^d)$, let
$\B(u_0,T,p,r)$ be the set defined by (\ref{e24aa}). For any $\nu \geqslant 0$, $v\in
\B(u_0,T,p,r)$, let $\I_{\nu}:\B(u_0,T,p,r) \rightarrow \B(u_0,T,p,r) $ be the map
constructed in Proposition \ref{p1}.
By Theorem \ref{t1}, given a $u_0 \in B_{p,p}^{r}(\R^d;\R^d)$ with
$\nabla \cdot u_0=0$,
 there is a constant $T_0>0$ independent of
$\nu$ such that for every $\nu \geqslant 0$,  there is a vector
$u_{\nu}$ which is a fixed point of
$\I_{\nu}$ in the space  $\B(u_0,T_0,p,r)$. For not making the notation confusing, we denote
$u_{\nu}$ with $\nu=0$ by $u$, and the initial point is denoted by $u_0$.
Let
$X_{\nu}$, $X$ be the solution of first equation in (\ref{e5}) with coefficients
$u_{\nu}$ and $u$ respectively,  and with the same
driven Brownian motion $B_t$.

In this section, we consider $B_{p,p}^{2+\alpha}$ norm for simplicity, the other cases can
be shown similarly.
We define,
$K:=\sup_{\nu}\sup_{t \in [0,T_0]}$ $||u_{\nu}(t)||_{B_{p,p}^{2+\alpha}}$ $<\infty$
and in the proof of the lemmas in this section, the constant $C$ will change in different line,
but will not depend on the variable stated in the conclusion of the lemmas. We first show the following estimate:
\begin{lem}\label{l4.1}
Suppose $u_0 \in B_{p,p}^{2+\alpha}(\R^d;\R^d)$ for some
$d<p<\infty$, $0<\alpha<1$,
then for every $f_1,f_2 \in W^{1,p}(\R^d)$,
\begin{equation*}
\begin{split}
&\int_{\R^d}|f_1( X_{s,\nu}^t(x))-f_2(X_{s}^t(x))|^p dx\\
&\le C_1||f_1-f_2||_{L^p}^p+  C_1e^{C_1KT_0}||\nabla f_2||_{L^p}^p
\big(T_0\sup_{t \in [0,T_0]}||u_{\nu}(t)-u(t)||_{L^{\infty}}^p+(\sqrt{2\nu}|B_s-B_t|)^p\big)\ a.s.,
\end{split}
\end{equation*}
where $C_1$ is  a positive constant independent of $\nu$, $T_0$, $K$, and $f_m$.
\end{lem}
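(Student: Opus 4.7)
The plan is to decompose the difference as in Lemma \ref{l9}:
$$
|f_1(X_{s,\nu}^t(x))-f_2(X_s^t(x))|\le |(f_1-f_2)(X_{s,\nu}^t(x))|+|f_2(X_{s,\nu}^t(x))-f_2(X_s^t(x))|.
$$
First assume $f_1,f_2\in C^2(\R^d)\cap W^{1,p}(\R^d)$. Since $\nabla\cdot u_\nu=0$, the flow $X^t_{s,\nu}$ is volume preserving, so the first piece contributes exactly $\|f_1-f_2\|_{L^p}^p$ by the change of variables (\ref{e10aa}). The bulk of the work is the second piece, which combines the effect of differing drifts \emph{and} differing diffusion coefficients (the Brownian term being absent when $\nu=0$).

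To handle this, I would introduce a one-parameter family of SDEs interpolating between the two flows: for $\theta\in[0,1]$, let $X_s^{t,\theta}(x)$ be the unique solution of
\begin{equation*}
\begin{cases}
&dX_s^{t,\theta}(x)=\theta\sqrt{2\nu}\,dB_s-\bigl((1-\theta)u+\theta u_\nu\bigr)(T-s,X_s^{t,\theta}(x))\,ds,\\
&X_t^{t,\theta}(x)=x,
\end{cases}
\end{equation*}
so that $X_s^{t,0}=X_s^t$ and $X_s^{t,1}=X_{s,\nu}^t$. Because $(1-\theta)u+\theta u_\nu$ is still divergence free, each $X_s^{t,\theta}$ preserves Lebesgue measure, giving the analogue of (\ref{e31a}). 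Following the regularity arguments of \cite{KU} (which apply since $u,u_\nu\in C([0,T_0];B^{2+\alpha}_{p,p})$ embeds into $C^{1,r(p)}$ in $x$), there is a version of $X_s^{t,\theta}(x)$ that is differentiable in $\theta$, and the derivative $V_s^{t,\theta}(x):=\frac{d}{d\theta}X_s^{t,\theta}(x)$ satisfies the purely pathwise (no stochastic integral) equation
\begin{equation*}
V_s^{t,\theta}(x)=\sqrt{2\nu}(B_s-B_t)+\int_t^s\bigl[(u-u_\nu)-\nabla\bigl((1-\theta)u+\theta u_\nu\bigr)\,V_r^{t,\theta}\bigr](T-r,X_r^{t,\theta}(x))\,dr,
\end{equation*}
where the $\sqrt{2\nu}(B_s-B_t)$ term arises because differentiating $\theta\sqrt{2\nu}(B_s-B_t)$ in $\theta$ yields a quantity independent of both $r$ and $x$. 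Since $\|\nabla((1-\theta)u+\theta u_\nu)\|_{L^\infty}\le CK$, Gronwall's lemma gives the pointwise bound
\begin{equation*}
|V_s^{t,\theta}(x)|\le C e^{CKT_0}\bigl(\sqrt{2\nu}|B_s-B_t|+T_0\sup_{t\in[0,T_0]}\|u_\nu(t)-u(t)\|_{L^\infty}\bigr),\quad\text{a.s.}
\end{equation*}
uniformly in $x$ and $\theta$.

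Then, writing $f_2(X_{s,\nu}^t(x))-f_2(X_s^t(x))=\int_0^1\nabla f_2(X_s^{t,\theta}(x))\cdot V_s^{t,\theta}(x)\,d\theta$, Jensen's inequality yields
$$
|f_2(X_{s,\nu}^t(x))-f_2(X_s^t(x))|^p\le \int_0^1 |\nabla f_2(X_s^{t,\theta}(x))|^p|V_s^{t,\theta}(x)|^p d\theta.
$$
Integrating in $x$, factoring the sup-in-$x$ bound on $|V_s^{t,\theta}|$ out, and using the measure-preserving property of $X_s^{t,\theta}$ to reduce $\int_{\R^d}|\nabla f_2(X_s^{t,\theta}(x))|^p dx$ to $\|\nabla f_2\|_{L^p}^p$ gives exactly the asserted estimate, after combining with the first piece and using $(a+b)^p\le C(a^p+b^p)$. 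Finally, for general $f_1,f_2\in W^{1,p}(\R^d)$, I would approximate by $C^2\cap W^{1,p}$ functions $f_{i,n}$ with $\|f_{i,n}\|_{W^{1,p}}\le\|f_i\|_{W^{1,p}}$ and pass to the limit via Fatou's lemma, exactly as in the closing argument of the proof of Lemma \ref{l9}. The main subtlety, compared with Lemma \ref{l9}, is the simultaneous presence of the stochastic term $\sqrt{2\nu}(B_s-B_t)$; the key observation that keeps the argument clean is that this term survives differentiation in $\theta$ as a constant, so $V_s^{t,\theta}$ still obeys an ODE whose Gronwall estimate is straightforward.
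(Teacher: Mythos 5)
Your proposal is correct and follows essentially the same route as the paper: the same splitting into $|(f_1-f_2)\circ X_{s,\nu}^t|$ plus $|f_2\circ X_{s,\nu}^t-f_2\circ X_s^t|$, the same interpolating family $X_s^{t,\theta}$ with diffusion coefficient $\theta\sqrt{2\nu}$ and drift $(1-\theta)u+\theta u_\nu$, the same Gronwall bound on $V_s^{t,\theta}$ (whose equation picks up the non-vanishing term $\sqrt{2\nu}(B_s-B_t)$, exactly as in the paper's equation for $V$), and the same measure-preservation plus Fatou approximation to conclude. Your explicit remark that the Brownian increment enters $V^{t,\theta}$ as a constant after differentiating in $\theta$, so that Gronwall still applies pathwise, is precisely the point the paper makes when it notes that, unlike in Lemma \ref{l9}, the martingale part of the variation equation does not vanish.
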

\begin{proof}
The proof is quite similar to the one of Lemma \ref{l9}.

By the approximation argument, it is enough to prove the conclusion for
every $f_1,f_2 \in C^{1}(\R^d)\bigcap W^{1,p}(\R^d)$.
Since
\begin{equation*}
\begin{split}
&|f_1(X_{s,\nu}^t(x))-f_2( X_{s}^t(x))|\le
|f_1(X_{s,\nu}^t(x))-f_2( X_{s,\nu}^t(x))|+
|f_2(X_{s,\nu}^t(x))-f_2(X_{s}^t(x))|.
\end{split}
\end{equation*}
By (\ref{e10aa}), for every $0\le t \le s \le T$,
\begin{equation}\label{e25aa}
\int_{\R^d}|f_1(X_{s,\nu}^t(x))-f_2(X_{s,\nu}^t(x))|^p dx \le ||f_1-f_2||_{L^p}^p,\ a.s.,
\end{equation}
As in Lemma \ref{l9}, for every $r\in [0,1]$, we define $X_s^{t,r}(x)$ to be the solution of
following SDE,
\begin{equation}\label{e25a}
\begin{cases}
&d X_{s}^{t,r}(x)=r\sqrt{2\nu}dB_s-
u_{r,\nu}(T-s,X_s^{t,r}(x))ds,\\
&X_{t}^{t,r}(x)=x, \ \ 0\le t\le s\le T_0,
\end{cases}
\end{equation}
where $u_{r,\nu}(t,x):=(1-r)u(t,x)+ru_{\nu}(t,x)$.
Clearly we have $X_s^{t,r}(x)=X_s^t(x)$ if $r=0$ and $X_s^{t,x}(x)=X_{s,\nu}^t(x)$ if $r=1$.

Since $u_{\nu}(t)\in B_{p,p}^{2+\alpha}(\R^d;\R^d)$,
$u_{r,\nu}(t)\in C_b^{1,r(p)}(\R^d;\R^d)$, by the argument in \cite{KU}
there is a version of
$X_s^{t,r}(x)$ which is differentiable with $r$, and $V_s^{t,r}(x):=\frac{d}{dr}(X_s^{t,r}(x))$ satisfies the
following SDE,
\begin{equation}\label{e26}
\begin{cases}
&dV_s^{t,r}(x)=\sqrt{2\nu}dB_s-u_{r,\nu}(T-s,X_s^{t,r}(x))V_s^{t,r}(x)ds\\
&+\big(u(T-s,X_{s}^{t,r}(x))-u_{\nu}(T-s,X_{s}^{t,r}(x))\big)ds,\\
&V_t^{t,r}(x)=0,\ \ \  0\le t\le s\le T_0.
\end{cases}
\end{equation}
Comparing with  equation (\ref{e33a}), the martingale part of (\ref{e26}) does not vanish.
By
Grownwall Lemma,
for every
$0\le t\le s \le T_0$, $r\in [0,1]$ and $x\in \R^d$,
\begin{equation}\label{e26a}
|V_s^{t,r}(x)|\le
Ce^{CKT_0}\big(T_0\sup_{t \in [0,T_0]}||u_{\nu}(t)-u(t)||_{L^{\infty}}+
\sqrt{2\nu}|B_s-B_t|\big),\ a.s..
\end{equation}
Also note that $f_2 \in C^1(\R^d)$; then, following the same procedure in Lemma \ref{l9} and especially (\ref{e16a}), we can show that,
\begin{equation*}
\begin{split}
&\int_{\R^d}|f_2(X_{s,\nu}^t(x))-f_2(X_{s}^t(x))|^p dx\\
& \le Ce^{CKT_0}\int_{\R^d}|\nabla f_2(x)|^p dx\big(
T_0\sup_{t \in [0,T_0]}||u_{\nu}(t)-u(t)||_{L^{\infty}}^p+ (\sqrt{2\nu}|B_s-B_t|)^p\big)\ a.s.,
\end{split}
\end{equation*}
together with (\ref{e25aa}), which allows to prove the conclusion.
\end{proof}

\begin{lem}\label{l4.2}
Suppose $u_0$ satisfies the same condition as the one in Lemma
\ref{l4.1}. For every $f_1, f_2 \in B_{p,p}^{1+\alpha}(\R^d)$,
$0\le t \le s \le T_0$,
\begin{equation*}
\begin{split}
& \int_{\R^d}\int_{\R^d}\frac{\Big|\big(f_1(X_{s,\nu}^t(x+y))-f_2(X_{s}^t(x+y))\big)-
\big(f_1(X_{s,\nu}^t(x))-f_2(X_{s}^t(x))\big)\Big|^p}{|y|^{d+p\alpha}} dxdy \\
& \le C_1e^{C_1KT_0}[f_1-f_2]_{B_{p,p}^{\alpha}}^p\\
&+C_1T_0^{p}(1+K^{p})e^{C_1KT_0}
||f_2||_{B_{p,p}^{1+\alpha}}^p
\sup_{t \in [0,T_0]}\big(||u_{\nu}(t)-u(t)||_{B_{p,p}^{1+\alpha}}^p+
(\sqrt{2\nu}|B_t|)^p\big)\ a.s.,
\end{split}
\end{equation*}
where  $C_1$
is a positive constant independent of $\nu$, $K$, $T_0$, $f_m$.
\end{lem}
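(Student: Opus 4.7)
The plan is to mirror the proof of Lemma~\ref{l10}, but interpolating between the SDE-flow $X_{s,\nu}^t$ and the ODE-flow $X_s^t$ via the family $X_s^{t,r}$ defined in (\ref{e25a}), and then tracking the additional Brownian contribution $\sqrt{2\nu}|B_s-B_t|$ that survives in the $r$-derivative (\ref{e26a}). By the density of $C_c^\infty(\R^d)$ in $B_{p,p}^{1+\alpha}(\R^d)$ together with a Fatou-type approximation (as in Lemma~\ref{l9} and Lemma~\ref{l10}), it suffices to work with $f_1,f_2\in C^2(\R^d)\cap B_{p,p}^{1+\alpha}(\R^d)$. Decompose the integrand, as in Lemma~\ref{l10}, by
\begin{equation*}
\begin{split}
&\big|(f_1(X_{s,\nu}^t(x+y))-f_2(X_s^t(x+y)))-(f_1(X_{s,\nu}^t(x))-f_2(X_s^t(x)))\big|\\
&\le \big|(f_1-f_2)(X_{s,\nu}^t(x+y))-(f_1-f_2)(X_{s,\nu}^t(x))\big|\\
&\quad +\big|(f_2(X_{s,\nu}^t(x+y))-f_2(X_{s,\nu}^t(x)))-(f_2(X_s^t(x+y))-f_2(X_s^t(x)))\big|\\
&:=J_{s,1}^t(x,y)+J_{s,2}^t(x,y).
\end{split}
\end{equation*}

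For $J_{s,1}^t$, because $X_{s,\nu}^t$ is a volume-preserving map with Lipschitz constant bounded by $Ce^{CKT_0}$ (estimates (\ref{e10aa}) and (\ref{e10})), the same change-of-variables argument that led to (\ref{e12}) yields
\begin{equation*}
\int_{\R^d}\int_{\R^d}\frac{|J_{s,1}^t(x,y)|^p}{|y|^{d+\alpha p}}\,dx\,dy\le C_1 e^{C_1KT_0}[f_1-f_2]_{B_{p,p}^{\alpha}}^p,
\end{equation*}
which accounts for the first term on the right-hand side of the claim.

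For $J_{s,2}^t$, I write $J_{s,2}^t(x,y)=\int_0^1\frac{d}{dr}\big(f_2(X_s^{t,r}(x+y))-f_2(X_s^{t,r}(x))\big)\,dr$ and, as in (\ref{e34}), split the $r$-derivative into a term containing $V_s^{t,r}(x+y)-V_s^{t,r}(x)$ multiplied by $\nabla f_2(X_s^{t,r}(x+y))$, and a term containing $\nabla f_2(X_s^{t,r}(x+y))-\nabla f_2(X_s^{t,r}(x))$ multiplied by $V_s^{t,r}(x)$. The pointwise bound (\ref{e26a}) on $V_s^{t,r}(x)$ is already in hand; combining it with the analog of (\ref{e14}) (Step 3 of Lemma~\ref{l7}, applied to $\nabla f_2$ along the flow $X_s^{t,r}$) handles the second piece and produces exactly a factor $T_0^p\,\|f_2\|_{B_{p,p}^{1+\alpha}}^p\,\bigl(\sup_t\|u_\nu(t)-u(t)\|_{L^\infty}^p+(\sqrt{2\nu}|B_s-B_t|)^p\bigr)$ after using $e^{CKT_0}$ factors.

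The first piece is the main technical step and the main obstacle: I apply It\^o's formula to (\ref{e26}) for $\Gamma_s^{t,r}(x,y):=V_s^{t,r}(x+y)-V_s^{t,r}(x)$. The \emph{martingale term vanishes}, because $\sqrt{2\nu}dB_s$ in (\ref{e26}) is independent of $x$; so the increment $\Gamma$ satisfies a Gronwall-type integral inequality driven only by $(\nabla u_{r,\nu}(X^{t,r}(x+y))-\nabla u_{r,\nu}(X^{t,r}(x)))V^{t,r}(x)$ and $(u-u_\nu)(X^{t,r}(x+y))-(u-u_\nu)(X^{t,r}(x))$. Using the H\"older estimate (\ref{e7d}) together with (\ref{e34a}) (the $|y|$-Lipschitz bound for the flow, which also holds for $X_s^{t,r}$ by the same proof as (\ref{e10})) and the pointwise bound (\ref{e26a}) for $V^{t,r}(x)$, one obtains
\begin{equation*}
|\Gamma_s^{t,r}(x,y)|\le C T_0(1+KT_0)e^{CKT_0}\Bigl(\sup_{t\in[0,T_0]}\|u_\nu(t)-u(t)\|_{B_{p,p}^{1+\alpha}}+\sqrt{2\nu}\sup_{t\in[0,T_0]}|B_t|\Bigr)\bigl(|y|^{r(p)}1_{\{|y|\le 1\}}+1_{\{|y|>1\}}\bigr),
\end{equation*}
the direct analog of (\ref{e13a}). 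Combining this with the volume-preserving estimate (\ref{e14}) for $\nabla f_2\circ X_s^{t,r}$ inside the $y$-integral, summing the two $r$-derivative pieces, and absorbing constants yields
\begin{equation*}
\int_{\R^d}\int_{\R^d}\frac{|J_{s,2}^t(x,y)|^p}{|y|^{d+\alpha p}}\,dx\,dy\le C_1 T_0^p(1+K^p)e^{C_1KT_0}\,\|f_2\|_{B_{p,p}^{1+\alpha}}^p \sup_{t\in[0,T_0]}\bigl(\|u_\nu(t)-u(t)\|_{B_{p,p}^{1+\alpha}}^p+(\sqrt{2\nu}|B_t|)^p\bigr),
\end{equation*}
which together with the $J_{s,1}^t$ estimate completes the proof. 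The only subtle point compared with Lemma~\ref{l10} is carefully propagating the $\sqrt{2\nu}|B_t|$ contribution from (\ref{e26a}) through Gronwall, and exploiting that the noise in (\ref{e26}) is spatially constant so that the It\^o differential of $\Gamma_s^{t,r}$ remains purely bounded-variation.
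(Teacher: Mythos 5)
Your proposal is correct and follows essentially the same route as the paper: reduce to smooth $f_1,f_2$ by density, reuse the decomposition and volume-preservation/change-of-variables estimates from Lemma \ref{l10}, and observe that the only new ingredient is the bound on $\Gamma_s^{t,r}(x,y)=V_s^{t,r}(x+y)-V_s^{t,r}(x)$, whose martingale part cancels because the noise in (\ref{e26}) is spatially constant, so Gronwall yields the analogue of (\ref{e13a}) with the extra $\sqrt{2\nu}|B_t|$ contribution inherited from (\ref{e26a}). This is exactly the paper's argument (its displayed estimate (\ref{e27})), which you have in fact spelled out in more detail than the paper does.
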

\begin{proof}
By the approximation argument, it is sufficient to prove the conclusion
for every $f_1,f_2 \in
C^2(\R^d) \bigcap B_{p,p}^{1+\alpha}(\R^d)$.
The proof is almost a repetition of  the steps of the proof of Lemma \ref{l10}, the only difference is that we need to
use the estimate for the solution $V_s^{t,r}(x)$ of (\ref{e26}), rather than that of (\ref{e33a}).

Let $\Gamma_s^{t,r}(x,y):=V_s^{t,r}(x+y)-V_s^{t,r}(x)$. Since the martingale part of
$\Gamma_s^{t,r}(x,y)$ vanishes, we can follow the procedure in the proof
of Lemma \ref{l10} step by step; without loss of generality, we assume
$T_0\le 1$, so by  (\ref{e26a}) we can derive the following estimate similar to (\ref{e13a}),
\begin{equation}\label{e27}
\begin{split}
&|\Gamma_s^{t,r}(x,y)|
\le CT_0(1+K)e^{CKT_0}\sup_{t \in [0,T_0]}
\big(||u_{\nu}(t)-u(t)||_{B_{p,p}^{1+\alpha}}\\
&+\sqrt{2\nu}|B_t|\big)
\big(|y|^{r(p)}
1_{\{|y|\le 1\}}+ 1_{\{|y|>1\}}\big)\ a.s..
\end{split}
\end{equation}
Hence based on the estimate (\ref{e26a}), (\ref{e27}) and following the same steps of Lemma \ref{l10}, we  prove the
conclusion.
\end{proof}

\begin{lem}\label{l4.3}
Suppose $u_0$ satisfies the same conditions in Lemma
\ref{l4.1}. Then, for every $f_1, f_2 \in \bigcap B_{p,p}^{2+\alpha}(\R^d)$ and
$0\le t \le s\le T_0$,
\begin{equation*}
\begin{split}
&||f_1\circ X_{s,\nu}^t(.)-f_2\circ X_{s}^t(.)||_{B_{p,p}^{1+\alpha}}
\le C_1e^{C_1KT_0}(1+T_0K)||f_1-f_2||_{B_{p,p}^{1+\alpha}}\\
&+C_1T_0e^{C_1KT_0}(1+K^2)
||f_2||_{B_{p,p}^{2+\alpha}}\sup_{t \in [0,T_0]}
\big(||u_{\nu}(t)-u(t)||_{B_{p,p}^{1+\alpha}}+\sqrt{2\nu}|B_t|\big)\ a.s.,
\end{split}
\end{equation*}
where $C_1$
is a positive constant independent of $\nu$, $K$, $T$, $f_m$.
\end{lem}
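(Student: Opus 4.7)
The plan is to mirror the proof of Lemma \ref{l11} almost verbatim, simply replacing the two-field pointwise estimates (Lemmas \ref{l9}, \ref{l10}) by their viscous/inviscid analogues (Lemmas \ref{l4.1}, \ref{l4.2}). So the extra ingredient in the bound, beyond $\sup_t\|u_\nu(t)-u(t)\|_{B^{1+\alpha}_{p,p}}$, is the additive $\sqrt{2\nu}|B_t|$ coming from the non-vanishing martingale part of the variation equation (\ref{e26}) when one interpolates between $X_{s,\nu}^t$ and $X_s^t$. First, by Lemma \ref{l4.1} (applied to $f_1,f_2$), we control $\|f_1\circ X_{s,\nu}^t-f_2\circ X_s^t\|_{L^p}$.

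Next, I would decompose
\[
\nabla(f_1\circ X_{s,\nu}^t)-\nabla(f_2\circ X_s^t)
=\big(\nabla f_1\circ X_{s,\nu}^t-\nabla f_2\circ X_s^t\big)\nabla X_{s,\nu}^t
+\big(\nabla f_2\circ X_s^t\big)\big(\nabla X_{s,\nu}^t-\nabla X_s^t\big),
\]
and bound the first summand by Lemma \ref{l4.1} applied to $\nabla f_1,\nabla f_2$, using (\ref{e8}) for $\|\nabla X_{s,\nu}^t\|_\infty$. For the second summand I set $\Gamma_s^t(x):=\nabla X_{s,\nu}^t(x)-\nabla X_s^t(x)$ and note that, since both tangent processes satisfy deterministic ODEs driven by $\nabla u_\nu(T-s,X_{s,\nu}^t)$ and $\nabla u(T-s,X_s^t)$ respectively, $\Gamma_s^t$ obeys an ODE with forcing $\nabla u_\nu(T-s,X_{s,\nu}^t)-\nabla u(T-s,X_s^t)$, to which Lemma \ref{l4.1} applies; Gr\"onwall then gives the pointwise $L^p$ estimate analogous to (\ref{e17a}) with $\|u_\nu-u\|_{W^{1,p}}+\sqrt{2\nu}|B_s-B_t|$ on the right.

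For the $B^{1+\alpha}_{p,p}$ semi-norm, I would reproduce the four-term split (\ref{e17}) via inequality (\ref{e22a}):
\[
\textstyle\sum_{i=1}^4 I_{s,i}^t(x,y),
\]
with $I_1$ the pure function-difference piece, $I_2$ a flow-commutator piece
\[
\Psi_s^t(x,y):=\big(\nabla X_{s,\nu}^t(x{+}y)-\nabla X_{s,\nu}^t(x)\big)-\big(\nabla X_s^t(x{+}y)-\nabla X_s^t(x)\big),
\]
and $I_3,I_4$ mixed lower-order terms. Then $I_1$ is handled directly by Lemma \ref{l4.2}; $I_3,I_4$ are treated by combining the $L^p$ bound on $\Gamma_s^t$ just obtained with the H\"older continuity (\ref{e7d}) of $\nabla f_2,\nabla v$ along the flow, exactly as in Step 2 of the proof of Lemma \ref{l11}. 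For $\Psi_s^t$ I would apply It\^o/ODE differentiation in (\ref{e7}) for both flows, use (\ref{e22a}) to split the source term, and estimate the increments $\nabla u_\nu(T-r,X_{r,\nu}^t(\cdot{+}y))-\nabla u_\nu(T-r,X_{r,\nu}^t(\cdot))$ versus the corresponding one for $u$ along $X_r^t$ by Lemma \ref{l4.2}. Gr\"onwall then yields
\[
\int_{\R^d}\int_{\R^d}\frac{|\Psi_s^t(x,y)|^p}{|y|^{d+\alpha p}}\,dy\,dx
\le C T_0^{p}e^{CKT_0}(1+K^{2p})\sup_{t\in[0,T_0]}\big(\|u_\nu(t)-u(t)\|_{B^{1+\alpha}_{p,p}}+\sqrt{2\nu}|B_t|\big)^p.
\]

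Assembling the four pieces in the same way as (\ref{l11.3}), together with the $L^p$ and first-order bounds from the previous paragraph, yields the stated inequality. The main technical obstacle is the $\Psi_s^t$ estimate, because it is the only place where both the spatial increment and the viscosity discrepancy interact at full order; every other contribution is either lower order in $T_0 K$ or controlled directly by the two companion lemmas. Since Lemmas \ref{l4.1} and \ref{l4.2} have been tailored so that the $\sqrt{2\nu}|B_t|$ term enters additively with the $\|u_\nu-u\|_{B^{1+\alpha}_{p,p}}$ term, no further structural change from the proof of Lemma \ref{l11} is needed.
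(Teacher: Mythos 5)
Your proposal is correct and follows essentially the same route as the paper: the paper's own proof consists precisely of the remark that the martingale part of $\nabla X_{s,\nu}^t-\nabla X_s^t$ vanishes, so one repeats the proof of Lemma \ref{l11} verbatim with Lemmas \ref{l4.1} and \ref{l4.2} in place of Lemmas \ref{l9} and \ref{l10}. Your write-up simply fills in the details of that repetition, and the resulting bounds match those stated.
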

\begin{proof}
By the approximation argument, it is sufficient to prove the conclusion
for every $f_1,f_2 \in
C^2(\R^d) \bigcap B_{p,p}^{1+\alpha}(\R^d)$.
Note that for $\nabla X_{s,\nu}^t(x)-\nabla X_s^t(x)$, the martingale part vanishes,
hence based on Lemma \ref{l4.1} and \ref{l4.2} and repeating the proof of Lemma \ref{l11},
we can prove the conclusion.
\end{proof}

Now we can show the following result about the limit behaviour of $u_{\nu}$.

\begin{thm}\label{t4.1}
Suppose $u_0$ satisfies the same condition as the one in Lemma \ref{l4.1}; then there is a $0<T_1 \le T_0$
(independent of $\nu$),
such that,
\begin{equation}\label{e27a}
\sup_{t \in [0,T_1]}||u_{\nu}(t)-u(t)||_{B_{p,p}^{1+\alpha}}\le \sqrt{2\nu T_1},
\end{equation}
and, for every $0<\beta<\alpha$,
\begin{equation}\label{e27c}
\lim_{\nu \rightarrow 0}\sup_{t \in [0,T_1]}||u_{\nu}(t)-u(t)||_{B_{p,p}^{2+\beta}}=0.
\end{equation}
\end{thm}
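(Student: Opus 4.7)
The plan is to use the fixed-point identities $u_\nu=\I_\nu(u_\nu)$ and $u=\I_0(u)$, combined with the representation
\begin{equation*}
g_\nu(T-t,x) = \e\bigl[u_0(X^t_{T,\nu}(x))\bigr] + \int_t^T \e\bigl[F_{u_\nu}(T-s, X^t_{s,\nu}(x))\bigr]\,ds
\end{equation*}
(and its deterministic counterpart for $\nu=0$), so that $u_\nu(t)-u(t)=\mathbf{P}\bigl(g_\nu(t)-g(t)\bigr)$. Since $\mathbf{P}$ is a singular integral operator, bounded on $B_{p,p}^{1+\alpha}(\R^d)$, it suffices to control $\|g_\nu(t)-g(t)\|_{B_{p,p}^{1+\alpha}}$. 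For the initial-datum piece I would apply Lemma \ref{l4.3} with $f_1=f_2=u_0$ (so the $\|f_1-f_2\|$ term vanishes), and for the drift piece with $f_1=F_{u_\nu}(T-s)$, $f_2=F_u(T-s)$. I would then invoke Lemma \ref{l3} to bound $\|F_{u_\nu}(T-s)-F_u(T-s)\|_{B_{p,p}^{1+\alpha}}\le C_1K\|u_\nu(T-s)-u(T-s)\|_{B_{p,p}^{1+\alpha}}$, and Lemma \ref{l1} to bound $\|F_u(T-s)\|_{B_{p,p}^{2+\alpha}}\le C_1K^2$. This reduces every term on the right to combinations of $\sup_{t\in[0,T_0]}\|u_\nu(t)-u(t)\|_{B_{p,p}^{1+\alpha}}$ and $\sqrt{2\nu}|B_t|$.

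Taking expectations, applying Jensen's inequality and Doob's maximal inequality to get $\e\sup_{t\in[0,T_0]}|B_t|\le C\sqrt{T_0}$ (with analogous $L^p$ bounds), the overall inequality takes the form
\begin{equation*}
\sup_{t\in[0,T_1]}\|u_\nu(t)-u(t)\|_{B_{p,p}^{1+\alpha}} \le A(K,T_0)\,T_1\sup_{t\in[0,T_1]}\|u_\nu(t)-u(t)\|_{B_{p,p}^{1+\alpha}} + B(K,T_0)\,T_1\sqrt{2\nu T_1}
\end{equation*}
valid for every $T_1\le T_0$, with $A$, $B$ depending only on $K$ and $T_0$. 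Choosing $T_1$ small enough (but uniformly in $\nu$, which is permissible since $K=\sup_\nu\sup_{t\in[0,T_0]}\|u_\nu(t)\|_{B_{p,p}^{2+\alpha}}$ is already a $\nu$-uniform bound) so that $A(K,T_0)T_1\le\tfrac12$ and $2B(K,T_0)T_1\le 1$, the first term on the right is absorbed into the left and (\ref{e27a}) follows.

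For (\ref{e27c}) I would use Besov interpolation. Setting $\theta=1+\beta-\alpha\in(0,1)$, the identity $2+\beta=(1-\theta)(1+\alpha)+\theta(2+\alpha)$ and \cite[Theorem 2.4.1]{T} give
\begin{equation*}
\|u_\nu(t)-u(t)\|_{B_{p,p}^{2+\beta}} \le C\,\|u_\nu(t)-u(t)\|_{B_{p,p}^{1+\alpha}}^{\alpha-\beta}\,\|u_\nu(t)-u(t)\|_{B_{p,p}^{2+\alpha}}^{1+\beta-\alpha}.
\end{equation*}
The second factor is uniformly bounded by $(2K)^{1+\beta-\alpha}$ from the definition of $K$, and (\ref{e27a}) controls the first factor by $(2\nu T_1)^{(\alpha-\beta)/2}$; letting $\nu\downarrow 0$ yields (\ref{e27c}) with explicit rate $O(\nu^{(\alpha-\beta)/2})$. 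The main delicate point of the argument is the bookkeeping of constants produced by the multiple applications of Lemmas \ref{l1}, \ref{l3} and \ref{l4.3}: one must verify that all of them depend only on $K$ and $T_0$ (and not on $\nu$), so that the absorption step producing (\ref{e27a}) closes with a rate that vanishes uniformly as $\nu\downarrow 0$.
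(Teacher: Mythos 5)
Your proposal is correct and follows essentially the same route as the paper: the same representation of $g_\nu$ and $g$, the same applications of Lemmas \ref{l1}, \ref{l3} and \ref{l4.3} to produce a self-improving inequality that is closed by choosing $T_1$ small (uniformly in $\nu$, since $K$ is a $\nu$-uniform bound), and the same interpolation argument for (\ref{e27c}). Your exponent bookkeeping in the interpolation step ($\|\cdot\|_{B_{p,p}^{1+\alpha}}^{\alpha-\beta}\|\cdot\|_{B_{p,p}^{2+\alpha}}^{1+\beta-\alpha}$) is in fact the correct reading of the paper's own (slightly mistyped) choice of $\theta$.
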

\begin{proof}
We define,
\begin{equation}\label{e25}
\begin{split}
&g_{\nu}(T_0-t,x):=\e\big(u_0(X_{T_0,\nu}^t(x))\big)+\int_t^{T_0} \e\big(F_{u_{\nu}}(T_0-s,X_{s,\nu}^t(x))\big)ds,\\
&g(T_0-t,x):=\e\big(u_0(X_{T_0}^t(x))\big)+\int_t^{T_0} \e\big(F_u(T_0-s,X_s^t(x))\big)ds.
\end{split}
\end{equation}
By Lemma \ref{l3}, Lemma \ref{l4.3} and H\"older  inequality,
\begin{equation*}
\begin{split}
& ||\e\big(F_{u_{\nu}}(T_0-s,X_{s,\nu}^t(\cdot ))-F_u(T_0-s,X_s^t(\cdot ))\big)||_{B_{p,p}^{1+\alpha}}^p\\
&\le \e\big(||
F_{u_{\nu}}(T_0-s,X_{s,\nu}^t(\cdot ))-F_u(T_0-s,X_s^t(\cdot ))||_{B_{p,p}^{1+\alpha}}^p\big)\\
&\le  Ce^{CKT_0}(1+T_0^pK^p)||F_{u_{\nu}}(T-s)-F_u(T-s)||_{B_{p,p}^{1+\alpha}}^p\\
&+CT_0^pe^{CKT_0}(1+K^{2p})
||F_u(T-s)||_{B_{p,p}^{2+\alpha}}^p\big(\sup_{t \in [0,T_0]}
||u_{\nu}(t)-u(t)||_{B_{p,p}^{1+\alpha}}^p+(\sqrt{2\nu T_0})^p\big).
\end{split}
\end{equation*}
Hence by Lemma \ref{l1} and \ref{l3},
\begin{equation*}
\begin{split}
& ||\e\big(F_{u_{\nu}}(T_0-s,X_{s,\nu}^t(\cdot ))-F_u(T_0-s,X_s^t(\cdot ))\big)||_{B_{p,p}^{1+\alpha}}^p\\
&\le  CK^pe^{CKT_0}\big(1+T_0^{p}K^{p}+T_0^pK^p(1+K^{2p})\big)
\big(\sup_{t \in [0,T_0]}
||u_{\nu}(t)-u(t)||_{B_{p,p}^{1+\alpha}}^p+(\sqrt{2\nu T_0})^p\big).
\end{split}
\end{equation*}
Analogously,
\begin{equation*}
\begin{split}
& ||\e\big(u_0(X_{T_0,\nu}^t(\cdot))-u_0(X_{T_0}^t(\cdot))\big)||_{B_{p,p}^{1+\alpha}}^p\\
&\le CT_0^{p}K^pe^{CKT_0}(1+K^{2p})\big(\sup_{t \in [0,T_0]}
||u_{\nu}(t)-u(t)||_{B_{p,p}^{1+\alpha}}^p+(\sqrt{2\nu T_0})^p\big).
\end{split}
\end{equation*}
Combining this estimate and (\ref{e25}), and noting that
$u_{\nu}(t)-u(t)=\mathbf{P}(g_{\nu}(t)-g(t))$, we have,
\begin{equation*}
\sup_{t \in [0,T_0]}
||u_{\nu}(t)-u(t)||_{B_{p,p}^{1+\alpha}}\le
CT_0 e^{CKT_0}(1+K^3)\big(\sup_{t \in [0,T_0]}
||u_{\nu}(t)-u(t)||_{B_{p,p}^{1+\alpha}}+\sqrt{2\nu T_0}\big).
\end{equation*}
Choosing $0<T_1\le T_0$, such that $CT_1e^{CKT_1}(1+K^3)\le \frac{1}{2}$, we obtain,
\begin{equation*}
\sup_{t \in [0,T_1]}
||u_{\nu}(t)-u(t)||_{B_{p,p}^{1+\alpha}}\le  \sqrt{2\nu T_1},
\end{equation*}
which implies (\ref{e27a}).

For every $0<\beta<\alpha$, due the interpolation inequality in
\cite[Theorem 2.4.1(a)]{T}, we obtain,
\begin{equation*}
||u_{\nu}(t)-u(t)||_{B_{p,p}^{2+\beta}}\le C||u_{\nu}(t)-u(t)||_{B_{p,p}^{1+\alpha}}^{\theta}
||u_{\nu}(t)-u(t)||_{B_{p,p}^{2+\alpha}}^{1-\theta},
\end{equation*}
where $0<\theta<1$ is the unique number such that $\theta(1+\alpha)+(1-\theta)(2+\alpha)=2+\beta$. (i.e. $\theta=\beta-\alpha$).
Since
$$\sup_{\nu}||u_{\nu}(t)-u(t)||_{B_{p,p}^{2+\alpha}}\le 2\sup_{\nu}\sup_{t \in [0,T_0]}
||u_{\nu}(t)||_{B_{p,p}^{2+\alpha}}\le 2K, $$
and, according to (\ref{e27a}), we can show (\ref{e27c}).
\end{proof}
\begin{rem}\label{r4.1}
As stated in Remark \ref{r1}, by the same methods above, we can estimate  lower and higher order
Besov norms. More precisely, if $u_0 \in B_{p,p}^{r}(\R^d;\R^d)$ for some $p>1$, $r>1+\frac{d}{p}$,
there exists a constant $0<T_1\le T_0$ such that,
\begin{equation*}
\sup_{t \in [0,T_1]}||u_{\nu}(t)-u(t)||_{B_{p,p}^{\max(r-1,1)}}\le \sqrt{2 \nu T_1}
\end{equation*}
and, for every $0<\tilde r<r$,
\begin{equation*}
\lim_{\nu \rightarrow 0}\sup_{t \in [0,T_1]}||u_{\nu}(t)-u(t)||_{B_{p,p}^{\tilde r}}=0.
\end{equation*}
\end{rem}

\section{The local existence theorem in $B_{p,q}^{r}(\R^d;\R^d)$}
As pointed out in Section 3, if we use the "Lagrangian path" (forward equation)
in (\ref{e3}), then we are unable to derive similar estimates for $B_{p,q}^{r} (p \neq q)$ norms. In this section we will
adopt a different "Lagrangian path", which is just a translation by a Brownian motion; together with the associated
forward-backward stochastic differential system similar to (\ref{e5}), we can establish useful estimates
for $B_{p,q}^{r}$ norms, which however depend on the viscosity
$\nu$. Therefore they can not be applied to the case of $\nu=0$, i.e. to the Euler equation.

As  in  Section 2, by  It\^o's formula and the theorem of
backward SDEs, $u$ is a (regular enough) solution of (\ref{e1}) in time interval $[0,T]$, if and only if
$(X_s^t(x),Y_s^t(x),Z_s^t(x),$ $u(t,x), p(t,x))$ satisfies the following (forward-) backward
stochastic differential system,
\begin{equation}\label{e28a}
\begin{cases}
& dX_s^t(x)=\sqrt{2\nu}dB_s\\
& dY_s^t(x)=\sqrt{2\nu}Z^{t}_s(x)dB_s+
u(T-s,X_s^t(x))\cdot Z_s^t(x)+\nabla p(T-s, X_s^t(x))ds\\
& Y_t^t(x)=u(T-t,x),\  \Delta p(t,x)=-\sum_{i,j=1}^3\partial_i u^j(t,x)\partial_j u^i(t,x)\\
& X_t^t(x)=x, Y_T^t(x)=u_0(X^t_T(x)).
\end{cases}
\end{equation}

For every $v \in \S(p,q,p',T)$ with
$v(0)=u_0$ for some $1<p<\infty$, $1\le q \le \infty$, $1<p'<\frac{d}{2}$, we consider the following
(forward-) backward SDE,
\begin{equation}\label{e29}
\begin{cases}
& dX_s^t(x)=\sqrt{2\nu} dB_s\\
& dY_s^t(x)=\sqrt{2\nu}Z^{t}_s(x)dB_s+
v(T-s,X_s^t(x))\cdot Z_s^t(x)-F_v(T-s, X_s^t(x))ds\\
& X_t^t(x)=x, Y_T^t(x)=u_0(X^t_T(x)),
\end{cases}
\end{equation}
where the vector $F_v$ is defined by (\ref{e5a}).

We first cite the following well-known Bismut-Elworthy-Li formula, e.g. see \cite{Bis}, \cite{EL},
\begin{lem}\label{l5.0}
Let $X_s^t(x)=x+\sqrt{2\nu}(B_s-B_t)$ for every $0\le t < s \le T$. Then for each
$f \in C_b(\R^d)$,
\begin{equation}\label{e45}
\begin{split}
& \nabla \big(\e(f(X_s^t(x)))\big)=\e\big(\nabla f (X_s^t(x))\big)
=\frac{1}{\sqrt{2\nu}(s-t)}\e\big(f (X_s^t(x))(B_s-B_t)\big).
\end{split}
\end{equation}
\end{lem}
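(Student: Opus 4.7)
The plan is to prove both identities by direct computation against the explicit Gaussian density of $X_s^t(x)$, which is the only nontrivial ingredient here. Since $X_s^t(x)=x+\sqrt{2\nu}(B_s-B_t)$, the law of $X_s^t(x)$ is Gaussian with mean $x$ and covariance $2\nu(s-t)\mathbf{I}$. Writing $\sigma:=2\nu(s-t)$ and $p_\sigma(y):=(2\pi\sigma)^{-d/2}e^{-|y|^2/(2\sigma)}$ for the heat kernel, we have
\begin{equation*}
\e\big(f(X_s^t(x))\big)=\int_{\R^d}f(z)\,p_\sigma(z-x)\,dz=(p_\sigma\star f)(x),
\end{equation*}
which is a smooth function of $x$ for every $f\in C_b(\R^d)$.

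First I would verify the identity $\nabla\big(\e(f(X_s^t(x)))\big)=\e\big(\nabla f(X_s^t(x))\big)$ for $f\in C_b^1(\R^d)$ by differentiating under the integral sign (legitimate since $\nabla_x p_\sigma(z-x)$ has exponential decay in $z$ and $f$ is bounded) and then performing the change of variables $y=z-x$ to move the gradient onto $f$. For general $f\in C_b(\R^d)$ one interprets $\e(\nabla f(X_s^t(x)))$ as $(p_\sigma\star \nabla f)(x)$ in the distributional sense, which agrees with $\nabla(p_\sigma\star f)(x)$ by the smoothing property of the heat kernel.

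Next I would establish the Bismut identity. Using the explicit form of the gradient of the Gaussian density,
\begin{equation*}
\nabla_x p_\sigma(z-x)=\frac{z-x}{\sigma}\,p_\sigma(z-x),
\end{equation*}
and differentiating under the integral sign again,
\begin{equation*}
\nabla\big(\e(f(X_s^t(x)))\big)=\int_{\R^d}f(z)\,\frac{z-x}{\sigma}\,p_\sigma(z-x)\,dz=\frac{1}{\sigma}\,\e\Big(f(X_s^t(x))\big(X_s^t(x)-x\big)\Big).
\end{equation*}
Substituting $X_s^t(x)-x=\sqrt{2\nu}(B_s-B_t)$ and $\sigma=2\nu(s-t)$, the prefactor becomes $\frac{\sqrt{2\nu}}{2\nu(s-t)}=\frac{1}{\sqrt{2\nu}(s-t)}$, which yields the stated formula.

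There is essentially no hard step here: the argument is a one-line Gaussian integration by parts, and the only care needed is to track the constants carefully so that $\sqrt{2\nu}$ rather than $2\nu$ appears in the denominator. For the full generality $f\in C_b$ (no smoothness), one may either invoke the convolution interpretation above or approximate $f$ by $f_n\in C_b^1$ with $f_n\to f$ pointwise boundedly and pass to the limit in the right-hand side by dominated convergence (using $\e(|B_s-B_t|)<\infty$).
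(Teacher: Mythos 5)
Your computation is correct: the constants work out ($\nabla_x p_\sigma(z-x)=\frac{z-x}{\sigma}p_\sigma(z-x)$ with $\sigma=2\nu(s-t)$ indeed produces the prefactor $\frac{1}{\sqrt{2\nu}(s-t)}$), the differentiation under the integral sign is justified by the Gaussian decay, and your remarks on how to interpret $\e(\nabla f(X_s^t(x)))$ when $f$ is merely bounded and continuous are the right way to make sense of the middle expression in (\ref{e45}). The paper itself offers no proof of this lemma: it simply cites it as the well-known Bismut--Elworthy--Li formula with references to Bismut and to Elworthy--Li, where the identity is established for general diffusion semigroups via martingale arguments. Your route is genuinely different and, in this special case, more elementary: since the forward process here is nothing but a deterministic translation of the starting point by a scaled Brownian increment, the law is an explicit Gaussian and the whole formula reduces to a one-line integration by parts against the heat kernel. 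What the citation buys the paper is generality (the BEL formula holds for nondegenerate diffusions with nonconstant coefficients); what your argument buys is a short, self-contained verification that also makes transparent exactly where the factor $\sqrt{2\nu}(s-t)$ comes from. Both are valid; for the lemma as stated, yours is arguably the more natural proof.
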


We have the following estimate,
\begin{lem}\label{l5.1}
Suppose $v\in \S(p,q,p',T)$
for some $1<p<\infty$, $1\le q \le \infty$, $1<p'<\frac{d}{2}$, $0<T<1$. Let
$(X,Y,Z)$ be the unique solution of (\ref{e29}) with coefficients $v$ and initial condition
$u_0=v(0)$, and let $g(t,x):=Y_{T-t}^{T-t}(x)$. Then for every $0<\alpha<1$,
$\max(1,2-\alpha)<\beta<2$, there exists a $0<T_0\le 1$ which only depends on
$K_1,K_{2,\beta},\nu$, such that for every $0<T<T_0$,
\begin{equation}\label{e24}
\begin{split}
&||g||_{1,T} \le
C_1||u_0||_{B_{p,q}^{1+\alpha}}+C_1K_1K_{2,\beta}T^{\frac{\beta-1}{\beta}},\\
&||g||_{2,\beta,T}\le C_1||u_0||_{B_{p,q}^{1+\alpha}}\big(\nu^{-\frac{1}{2}}+T^{\frac{1}{2}}\big)
+CT^{\frac{2\alpha+3\beta-4}{2\beta}}(K_1^2+K_{2,\beta}^2),
\end{split}
\end{equation}
where $||g||_{1,T}:=\sup_{t \in [0,T]}||g(t)||_{B_{p,q}^{1+\alpha}}$,
$||g||_{2,\beta,T}:=\big(\int_0^T||g(t)||_{B_{p,q}^{2+\alpha}}^{\beta}dt\big)^{\frac{1}{\beta}}$,
$K_1:=\sup_{t \in [0,T]}$ $||v(t)||_{B_{p,q}^{1+\alpha}}$,
$K_{2,\beta}:=\big(\int_0^T||v(t)||_{B_{p,q}^{2+\alpha}}^{\beta}dt\big)^{\frac{1}{\beta}}$,
and $C_1$ is a constant independent of
$K_1$, $K_2$, $\nu$, $T$, $p'$ and $v$.
\end{lem}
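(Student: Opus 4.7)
The plan is to reduce (\ref{e29}) to a linear forward parabolic PDE and then estimate its solution via the heat semigroup and Duhamel's formula. First, because $v \in \S(p,q,p',T)$ is smooth, applying It\^o's formula to a candidate $h(T-s, X_s^t(x))$ and matching coefficients with (\ref{e29}) identifies $Z_s^t(x) = \nabla h(T-s, X_s^t(x))$ and forces $h$ to satisfy the \emph{forward} equation
\[
\partial_t h = \nu \Delta h - v\cdot \nabla h + F_v,\qquad h(0,\cdot)=u_0.
\]
This linear problem has a unique classical solution and the Pardoux-Peng Feynman-Kac identification \cite{PP} yields $g(t,x) = h(t,x)$. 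It therefore suffices to estimate $g$ in the stated Besov norms through the Duhamel representation
\[
g(t) = P_t^\nu u_0 + \int_0^t P_{t-\tau}^\nu F_v(\tau)\,d\tau - \int_0^t P_{t-\tau}^\nu \bigl(v(\tau)\cdot\nabla g(\tau)\bigr)\,d\tau,
\]
where $P_t^\nu := e^{t\nu\Delta}$ is the heat semigroup. Since $\nabla\cdot v(\tau) = 0$, I would rewrite the transport term as a divergence, $v\cdot \nabla g = \nabla\cdot(v\otimes g)$, so that one spatial derivative is absorbed into $P_{t-\tau}^\nu$, producing the smoothing factor $(\nu(t-\tau))^{-1/2}$ at the cost of no regularity loss on $g$.

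The analytic inputs would be the standard Besov heat-kernel bound $\|P_t^\nu f\|_{B_{p,q}^{s+\theta}} \le C(\nu t)^{-\theta/2}\|f\|_{B_{p,q}^{s}}$ for $\theta\ge 0$; Lemma \ref{l1}, which gives $\|F_v\|_{B_{p,q}^{1+\alpha}} \le C\|\nabla v\|_{L^\infty}\|v\|_{B_{p,q}^{1+\alpha}}$ together with the Sobolev embedding $\|\nabla v\|_{L^\infty} \le C\|v\|_{B_{p,q}^{2+\alpha}}$ (using $1+\alpha>d/p$); and the standard Besov product inequality $\|v\otimes g\|_{B_{p,q}^{1+\alpha}} \le C\bigl(\|v\|_{B_{p,q}^{1+\alpha}}\|g\|_{L^\infty}+\|v\|_{L^\infty}\|g\|_{B_{p,q}^{1+\alpha}}\bigr)$, which after embedding absorbs into $C\|v\|_{B_{p,q}^{1+\alpha}}\|g\|_{B_{p,q}^{1+\alpha}}$.

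For the first bound on $\|g\|_{1,T}$, I would take $B_{p,q}^{1+\alpha}$ norms in the Duhamel formula. The linear term contributes $C\|u_0\|_{B_{p,q}^{1+\alpha}}$. The $F_v$ integral is estimated using Lemma \ref{l1} and H\"older's inequality in time with conjugate exponents $\beta$ and $\beta/(\beta-1)$, giving
\[
\int_0^T \|F_v(\tau)\|_{B_{p,q}^{1+\alpha}}\,d\tau \le C K_1 \int_0^T \|v(\tau)\|_{B_{p,q}^{2+\alpha}}\,d\tau \le C K_1 K_{2,\beta} T^{(\beta-1)/\beta}.
\]
The transport integral is handled similarly after the divergence trick, with the factor $(\nu(t-\tau))^{-1/2}$ integrated against the $B_{p,q}^{1+\alpha}$ norm of $v\otimes g$ and then closed via small-time bootstrap. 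For $\|g\|_{2,\beta,T}$, I would estimate in $B_{p,q}^{2+\alpha}$: the linear term costs $(\nu t)^{-1/2}$ against the data, which integrated in $L^\beta(0,T)$ is finite precisely because $\beta<2$ makes $(\nu t)^{-\beta/2}$ locally integrable (yielding the $\nu^{-1/2}$ term); the convolution pieces for $F_v$ and $v\cdot\nabla g$ are handled by Young's convolution in time with kernel $t^{-1/2}$, combined again with Lemma \ref{l1} and the product estimate.

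The main obstacle is to decouple $\|g\|_{1,T}$ and $\|g\|_{2,\beta,T}$: the transport integral, estimated in $B_{p,q}^{1+\alpha}$, couples to $\|g\|_{2,\beta,T}$ through the weighted convolution $\int_0^t (t-\tau)^{-1/2}\|g(\tau)\|_{B_{p,q}^{2+\alpha}}\,d\tau$, which by H\"older with conjugate exponents $\beta/(\beta-1)$ and $\beta$ is bounded by $C T^{(2\alpha+3\beta-4)/(2\beta)}\|g\|_{2,\beta,T}$ up to a numerical constant (the exact exponent is forced by matching the time scaling of the kernel $(t-\tau)^{-1/2}$ against $L^\beta$ in $\tau$). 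Closing the two resulting inequalities requires choosing $T_0\in(0,1]$ small in a way that depends on $K_1$, $K_{2,\beta}$ and $\nu$, so that the small-time factors absorb the nonlinear coefficients. The constraint $\beta > 2-\alpha$ is precisely what makes the exponent $(2\alpha+3\beta-4)/(2\beta)$ positive, which is essential so that the bootstrap actually closes as $T\to 0$.
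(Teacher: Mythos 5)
Your overall strategy is the analytic mirror of the paper's probabilistic one: the Duhamel representation you write down is exactly what the paper obtains by taking expectations in (\ref{e29}) (since $\e f(x+\sqrt{2\nu}(B_s-B_t))=e^{\nu(s-t)\Delta}f(x)$), and the heat-kernel smoothing bound $\|\nabla P^\nu_t f\|_{L^p}\le C(\nu t)^{-1/2}\|f\|_{L^p}$ that you invoke is precisely the Bismut--Elworthy--Li formula (Lemma \ref{l5.0}) that the paper uses to gain the factor $(\nu(r-t))^{-1/2}$. The identification of $g$ with the solution of $\partial_t h=\nu\Delta h-v\cdot\nabla h+F_v$, the treatment of the $F_v$ term via Lemma \ref{l1} and H\"older in time, and the use of Young's inequality in time for the $L^\beta_t B^{2+\alpha}_{p,q}$ estimate of the transport term are all consistent with what the paper does.

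However, the step you describe as the ``main obstacle'' is handled incorrectly, and this is a genuine gap. You claim that
$\int_0^t (t-\tau)^{-1/2}\|g(\tau)\|_{B_{p,q}^{2+\alpha}}\,d\tau$
is bounded by $C T^{(2\alpha+3\beta-4)/(2\beta)}\|g\|_{2,\beta,T}$ via H\"older with exponents $\beta$ and $\beta/(\beta-1)$. Since $1<\beta<2$, the conjugate exponent satisfies $\beta/(\beta-1)>2$, so $\int_0^t (t-\tau)^{-\beta/(2(\beta-1))}\,d\tau=\infty$: a pointwise-in-$t$ H\"older bound of this singular convolution against the $L^\beta_t$ norm of $\|g(\cdot)\|_{B^{2+\alpha}_{p,q}}$ is simply false in this range of $\beta$ (it would require $\beta>2$). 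This is exactly the difficulty that forces the paper to introduce the auxiliary weighted norm $\|g\|_{3,T}:=\sup_{t\in(0,T]}t^{1/2}\|g(t)\|_{B^{2+\alpha}_{p,q}}$, which dominates $\|g\|_{2,\beta,T}$ precisely because $\beta<2$, and to close the singular convolution through the Beta-function identity $\int_t^T(r-t)^{-1/2}(T-r)^{-1/2}\,dr=B(\tfrac12,\tfrac12)$; without this (or an equivalent device, e.g.\ running the whole fixed-point estimate in $L^\beta_t$ and never asking for a pointwise-in-$t$ control by $\|g\|_{2,\beta,T}$) your bootstrap does not close. A second, related omission: for the $B^{2+\alpha}_{p,q}$ estimate the term $F_v$ cannot be bounded by $\|v\|_{W^{2,p}}\|v\|_{B^{2+\alpha}_{p,q}}$ and then integrated directly, since that produces $\int_0^T K_2(s)^2\,ds$, which is not controlled by $K_{2,\beta}$ when $\beta<2$; the paper interpolates $\|v\|_{W^{2,p}}\le CK_1^{\alpha}K_2^{1-\alpha}$ so that only $\int_0^T K_2(s)^{2-\alpha}\,ds\le K_{2,\beta}^{2-\alpha}T^{(\alpha+\beta-2)/\beta}$ appears --- and this is where the hypothesis $\beta>2-\alpha$ is actually consumed, not merely in making a final exponent positive.
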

\begin{proof}


Some parts of the proof are inspired by reference \cite{K}.

{\bf Step 1}: Let $K_1(t):=||v(t)||_{B_{p,q}^{1+\alpha}}$,
$K_2(t):=||v(t)||_{B_{p,q}^{2+\alpha}}$,
$||g||_{3,T}:=\sup_{t \in (0,T]}\{t^{\frac{1}{2}}$ $||g(t)||_{B_{p,q}^{2+\alpha}}\}$.
Since $||g||_{2,\beta,T}\le C||g||_{3,T}$, it is sufficient to prove
(\ref{e24}) for $||g||_{3,T}$.

Since $v \in \S(p,q,p',T)$, by \cite[Theorem 2.9]{PP}, we can find a version of
$(X_s^t(x), Y_s^t(x),$ $Z_s^t(x))$ which is a.s.  differentiable (for any order) with respect to $x$
and, according to
\cite{PP}, we know that  for every $0\le t \le T$, $l>0$,
\begin{equation}\label{e29a}
\sum_{k=0}^2
\e\Big(\sup_{t\le s \le T }\big(|\nabla^k Y_s^t(x)|^l+|\nabla^k Z_s^t(x)|^l\big)\Big)<\infty.
\end{equation}
From \cite[Lemma 2.5]{PP}, for $g(t,x):=Y_{T-t}^{T-t}(x)$
we have $Y_s^t(x)=g(T-s,X_s^t(x))$,
$Z_s^t(x)=\nabla Y_s^t(x)=\nabla g(T-s,X_s^t(x))$ for every $0\le t \le s \le T$ (note that
the $Z_s^t(x)$ in this paper is actually $\frac{Z_s^t(x)}{\sqrt{\nu}}$ in \cite{PP}).


Taking the expectation in (\ref{e29}), and taking the derivative in $x$, for every $0\le t \le T$,
\begin{equation}\label{e30}
\begin{split}
&\nabla g(T-t,x)=\e\big(\nabla g(T-t,X_t^t(x))\big)\\
&=\e\big(\nabla u_0(X_T^t(x))\big)-\int_t^T
\e\Big(\nabla \big(v \nabla g\big)(T-r,X_r^t(x))\Big)dr
+\int_t^T \e\big(\nabla F_v(T-r,X_r^t(x))\big)dr\\
&:=I_0^t(x)+\sum_{i=1}^2\int_t^T I_{r,i}^t(x) dr.
\end{split}
\end{equation}

Since $X_s^t(x)=x+B_s-B_t$, it is easy to check that for every
$f \in C_c^{\infty}(\R^d)$, $l>1$, $y \in \R^d$,
\begin{equation}\label{e35a}
\begin{split}
& \int_{\R^d}f(X_s^t(x))dx=\int_{\R^d}f(x)dx,\ a.s.,
\end{split}
\end{equation}
\begin{equation}\label{e39}
||f(X_s^t(\cdot+y))-f(X_s^t(\cdot))||_{L^l}=||f(\cdot+y)-f(\cdot)||_{L^l},\ a.s..
\end{equation}
We first consider the case $1\le q<\infty$. For every $y\in \R^d$, by (\ref{e39}) and H\"older inequality,
\begin{equation*}
\begin{split}
&\int_{\R^d}\frac{||I_0^t(\cdot+y)-I_0^t(\cdot)||_{L^p}^q}{|y|^{3+q\alpha}}dy\\
&\le \int_{\R^d}
\frac{\Big(\int_{\R^d}\e\big(\big|\nabla u_0(X_T^t(x+y))-\nabla u_0(X_T^t(x))\big|^p
\big)dx\Big)^{\frac{q}{p}}}{|y|^{3+q\alpha}}dy\\
&\le \int_{\R^d}\frac{||\nabla u_0(\cdot+y)-\nabla u_0(\cdot)||_{L^p}^q}{|y|^{3+q\alpha}}dy
\le ||u_0||_{B_{p,q}^{1+\alpha}}^q.
\end{split}
\end{equation*}
Then we have,
\begin{equation*}
||I_0^t(\cdot)||_{B_{p,q}^{\alpha}}\le C||u_0||_{B_{p,q}^{1+\alpha}}.
\end{equation*}
By (\ref{e45}),
\begin{equation}\label{e30aa}
I_{r,1}^t(x)=-\frac{1}{\sqrt{2\nu}(r-t)}\e\Big(\big((v \nabla g)(T-r,X_r^t(x))\big)\big(B_r-B_t\big)\Big).
\end{equation}
According to (\ref{e7d}), we have,
\begin{equation}\label{e30a}
\begin{split}
& \int_{\R^d}\e\Big(\big|(v\nabla g)(T-r,X_r^t(x+y))-
(v\nabla g)(T-r,X_r^t(x))\big|^p\Big)dx\\
&\le CK_1(T-r)^p||\nabla g(T-r,\cdot+y)-\nabla g(T-r,\cdot)
||_{L^p}^p\\
&+C||v(T-r,X_r^t(\cdot+y))-
v(T-r,X_r^t(\cdot))||_{L^{\infty}}^p||\nabla g(T-r)||_{L^p}^p\\
&\le CK_1(T-r)^p||\nabla g(T-r,\cdot+y)-\nabla g(T-r,\cdot)
||_{L^p}^p\\
&+CK_1(T-r)^p\big(|y|^{pr(p)}1_{\{|y|\le 1\}}+ 1_{\{|y|> 1\}}\big)||g(T-r)||_{W^{1,p}}^p,
\end{split}
\end{equation}
and by H\"older inequality,
\begin{equation}\label{e36aa}
\begin{split}
& \int_{\R^d}\frac{||I_{r,1}^t(\cdot+y)-I_{r,1}^t(\cdot)||_{L^p}^q}{|y|^{3+q\alpha}}dy\\
&\le \int_{\R^d}\frac{\big|\e\big(|B_r-B_t|^{\frac{p}{p-1}}\big)\big|^{\frac{q(p-1)}{p}}}{\nu^{\frac{q}{2}}(r-t)^{q}
|y|^{3+q\alpha}}
\cdot\Big(\int_{\R^d}\e\Big(\big|(v\nabla g)(T-r,X_r^t(x+y))-
(v\nabla g)(T-r,X_r^t(x))\big|^p\Big)dx\\
&\le  \frac{CK_1^q}{(\nu(r-t))^{\frac{q}{2}}}||g(T-r)||_{B_{p,q}^{1+\alpha}}^q.
\end{split}
\end{equation}
Then we get,
\begin{equation*}
||I_{r,1}^t(\cdot)||_{B_{p,q}^{\alpha}}\le \frac{CK_1}
{\nu^{\frac{1}{2}}(r-t)^{\frac{1}{2}}}||g(T-r)||_{B_{p,q}^{1+\alpha}}.
\end{equation*}
Similarly, by (\ref{e6}), H\"older inequality and noting that $||\nabla v(t)||_{L^{\infty}}\le CK_2(t)$,
\begin{equation*}
\begin{split}
& \int_t^T ||I_{r,2}^t(\cdot)||_{B_{p,q}^{\alpha}}dr\le
CK_1\int_0^T K_{2}(s)ds \le CK_1K_{2,\beta}T^{\frac{\beta-1}{\beta}}.
\end{split}
\end{equation*}
As above, taking the expectation of (\ref{e29}), by (\ref{e6}), we deduce that
\begin{equation*}
\begin{split}
& ||g(T-t)||_{L^p}\le C||u_0||_{L^p}+K_1\int_t^T ||g(T-r)||_{W^{1,p}}dr+
K_1\int_t^T K_{2}(T-r)dr.
\end{split}
\end{equation*}
Combining all estimates we have established above and (\ref{e30}), we obtain
\begin{equation}\label{e38}
\begin{split}
&||g||_{1,T}
\le C||u_0||_{B_{p,q}^{1+\alpha}}+C
\big(T^{\frac{1}{2}}\nu^{-\frac{1}{2}}+T\big)K_1||g||_{1,T}
+CK_1K_{2,\beta}T^{\frac{\beta-1}{\beta}}.
\end{split}
\end{equation}

{\bf Step 2}: Taking the derivative with respect to $x$ in (\ref{e30}), to obtain
\begin{equation}\label{e31aa}
\begin{split}
&\nabla^2 g(T-t,x)
=\e\big(\nabla^2 u_0(X_{T}^t(x))\big)-\int_t^{T}
\e\Big(\nabla \big(v \nabla^2 g\big)(T-r,X_r^t(x))\Big)dr\\
&-\int_t^{T} \e\big((\nabla^2 v \nabla g)(T-r,X_r^t(x))\big)dr
-\int_t^{T} \e\big((\nabla v \nabla^2 g)(T-r,X_r^t(x))\big)dr\\
&+\int_t^{T} \e\big(\nabla F_v(T-r,X_r^t(x))\big)dr\\
&:=J_{0}^t(x)+\sum_{i=1}^4\int_t^{T} J_{r,i}^t(x) dr.
\end{split}
\end{equation}

According to (\ref{e45}),
\begin{equation*}
\begin{split}
& J_{0}^t(x)=\frac{1}{\sqrt{2\nu}(T-t)}\e\Big(\nabla u_0(X_{T}^t(x))\big(B_{T}-B_t\big)\Big),\\
& J_{r,1}^t(x)=-\frac{1}{\sqrt{2\nu}(r-t)}\e\Big(\big((v \nabla^2 g)(T-r,X_r^t(x))\big)\big(B_r-B_t\big)\Big);
\end{split}
\end{equation*}
analogously to (\ref{e30a}), (\ref{e36aa}) and using (\ref{e7d}) we obtain
\begin{equation*}
\begin{split}
& ||J_{0}^t(\cdot)||_{B_{p,q}^{\alpha}}\le \frac{C||u_0||_{B_{p,q}^{1+\alpha}}}{\nu^{\frac{1}{2}}(T-t)^{\frac{1}{2}}}
,\\
&||J_{r,1}^t(\cdot)||_{B_{p,q}^{\alpha}}\le \frac{CK_1||g(T-r)||_{B_{p,q}^{2+\alpha}}}{\sqrt{2\nu(r-t)}}
\le \frac{CK_1||g||_{3,T}}{\nu^{\frac{1}{2}}(r-t)^{\frac{1}{2}}(T-r)^{\frac{1}{2}}},\\
&||J_{r,2}^t(\cdot)||_{B_{p,q}^{\alpha}}\le C\big(
||g(T-r)||_{B_{p,q}^{2+\alpha}}||v(T-r)||_{W^{2,p}}+
||\nabla g(T-r)||_{L^{\infty}}||v(T-r)||_{B_{p,q}^{2+\alpha}}\big),\\
&||J_{r,3}^t(\cdot)||_{B_{p,q}^{\alpha}}\le C\big(
||g(T-r)||_{B_{p,q}^{2+\alpha}}||\nabla v(T-r)||_{L^{\infty}}+
||g(T-r)||_{W^{2,p}}||v(T-r)||_{B_{p,q}^{2+\alpha}}\big);
\end{split}
\end{equation*}
and by the interpolation inequality in \cite[Theorem 2.4.1]{T}, we conclude that
\begin{equation}\label{e32aa}
\begin{split}
&||\nabla g(T-r)||_{L^{\infty}}\le C||g(T-r)||_{W^{2,p}}\le C||g(T-r)||^{\alpha}_{B_{p,q}^{1+\alpha}}
||g(T-r)||^{1-\alpha}_{B_{p,q}^{2+\alpha}},\\
&||\nabla v(T-r)||_{L^{\infty}}\le C||v(T-r)||_{W^{2,p}}\le C K_1(T-r)^{\alpha}K_{2}(T-r)^{1-\alpha}.
\end{split}
\end{equation}
Hence by H\"older inequality, for any $\max(1,2-\alpha)<\beta<2$,
\begin{equation*}
\begin{split}
&\int_t^T||J_{r,2}^t(\cdot)||_{B_{p,q}^{\alpha}}dr\le CK_1^{\alpha}||g||_{3,T}\int_0^T
\frac{K_{2}(s)^{1-\alpha}}{s^{\frac{1}{2}}}ds+C||g||_{1,T}^{\alpha}||g||_{3,T}^{1-\alpha}\int_0^T
\frac{K_{2}(s)}{s^{\frac{1-\alpha}{2}}}ds,\\
&\le CK_1^{\alpha}K_{2,\beta}^{1-\alpha}||g||_{3,T}T^{\frac{2\alpha+\beta-2}{2\beta}}+
CK_{2,\beta}||g||_{1,T}^{1-\alpha}||g||_{3,T}^{\alpha}T^{\frac{\alpha\beta+\beta-2}{2\beta}},
\end{split}
\end{equation*}
and
\begin{equation*}
\begin{split}
&\int_t^T||J_{r,3}^t(\cdot)||_{B_{p,q}^{\alpha}}dr\le
CK_1^{\alpha}K_{2,\beta}^{1-\alpha}||g||_{3,T}T^{\frac{2\alpha+\beta-2}{2\beta}}+
CK_{2,\beta}||g||_{1,T}^{\alpha}||g||_{3,T}^{1-\alpha}T^{\frac{\alpha\beta+\beta-2}{2\beta}}.
\end{split}
\end{equation*}

Similarly, by (\ref{e6}) and (\ref{e32aa}),
\begin{equation*}
\int_t^T||J_{r,4}^t(\cdot)||_{B_{p,q}^{\alpha}}dr
\le CK_1^{\alpha}\int_0^T K_{2}(s)^{2-\alpha}ds
\le CK_1^{\alpha}K_{2,\beta}^{2-\alpha}T^{\frac{\alpha+\beta-2}{\beta}}.
\end{equation*}
Since $\int_t^T\frac{1}{(r-t)^{\frac{1}{2}}(T-r)^{\frac{1}{2}}}dr=B(\frac{1}{2}, \frac{1}{2})$,
we have,
\begin{equation*}
\int_t^T||J_{r,1}^t(\cdot)||_{B_{p,q}^{\alpha}}dr.
\le CK_1||g||_{3,T}.
\end{equation*}
Using the inequality
$a^{\alpha}b^{1-\alpha}\le C(a+b)$ for every $a,b>0$, and putting  all above estimates
together into (\ref{e31aa}), we obtain,
\begin{equation*}
\begin{split}
& ||\nabla^2 g(T-t)||_{B_{p,q}^{\alpha}}\le \frac{C||u_0||_{B_{p,q}^{1+\alpha}}}{\nu^{\frac{1}{2}}(T-t)^{\frac{1}{2}}}
+\frac{CK_1||g||_{3,T}}{\nu^{\frac{1}{2}}}\\
&+CT^{\frac{\alpha+\beta-2}{\beta}}\big((K_1+K_{2,\beta})||g||_{3,T}+K_{2,\beta}||g||_{1,T}+K_1^2+K_{2,\beta}^2\big),
\end{split}
\end{equation*}
where we have used the fact that $T^{a_1}\le T^{a_2}$ for $0<a_2\le a_1$ as
$T \le 1$. Combining this with (\ref{e38}), we have
\begin{equation}\label{e37}
\begin{split}
& ||g||_{3,T}\le \sup_{t \in [0,T)}\{(T-t)^{\frac{1}{2}}|| g(T-t)||_{B_{p,q}^{2+\alpha}}\}
\le C||u_0||_{B_{p,q}^{1+\alpha}}\big(\nu^{-\frac{1}{2}}+T^{\frac{1}{2}}\big)
+CK_1T^{\frac{1}{2}}\nu^{-\frac{1}{2}}\big(||g||_{1,T}\\
&+||g||_{3,T}\big)
+CT^{\frac{2\alpha+3\beta-4}{2\beta}}\big((K_1+K_{2,\beta})||g||_{3,T}+(K_1+K_{2,\beta})||g||_{1,T}+K_1^2+K_{2,\beta}^2\big).
\end{split}
\end{equation}
From (\ref{e38}), (\ref{e37}), if we take $0<T_0<1$ which only depends
on $K_1$, $K_{2,\beta}$, $\nu$, such that
\begin{equation*}
C\big(T_0^{\frac{1}{2}}\nu^{-\frac{1}{2}}+T_0\big)K_1\le \frac{1}{4},\
CK_1T_0^{\frac{1}{2}}\nu^{-\frac{1}{2}}\le \frac{1}{4},\
CT_0^{\frac{2\alpha+3\beta-4}{2\beta}}(K_1+K_{2,\beta})\le \frac{1}{4},
\end{equation*}
then  estimate (\ref{e24}) holds.

If $q=\infty$, by the same argument as above, we may still prove (\ref{e24}).

\end{proof}

We also have the following difference estimate.

\begin{lem}\label{l5.1a}
Suppose that $v_m\in \S(p,q,p',T),\ m=1,2$,
for some $d<p<\infty$, $1\le  q \le \infty$, $1<p'<\frac{d}{2}$, $0<T<1$, let
$(X_m,Y_m,Z_m)$ be the unique solution of (\ref{e5}) with coefficients $v_m$ and initial condition
$u_{0,m}:=v_m(0)$, and let $g_m(t,x):=Y_{T-t,m}^{T-t}(x)$. Then for every $0<\alpha<1$, $\max(1,2-\alpha)<\beta<2$,
there is a constant $0<T_0\le 1$ which only depends on $K_1$, $K_{2,\beta}$, $\nu$, such that
for every $0<T\le T_0$,
\begin{equation}\label{e38a}
\begin{split}
&\sup_{t \in [0,T]}||g_1(t)-g_2(t)||_{W^{1,p}}
\le C_1||u_{0,1}-u_{0,2}||_{W^{1,p}}\\
&+C_1(1+K_1^2+K_{2,\beta}^2)T^{\frac{\beta-1}{\beta}}(1+\nu^{-1})\sup_{t \in [0,T]}
||v_1(t)-v_2(t)||_{W^{1,p}},
\end{split}
\end{equation}
where $K_1:=\sup_{m=1,2}\sup_{t \in [0,T]}||v_m(t)||_{B_{p,q}^{1+\alpha}}$,
 $K_{2,\beta}:=\sup_{m=1,2}\big(\int_0^T||v_m(t)||_{B_{p,q}^{2+\alpha}}^{\beta}dt\big)^{\frac{1}{\beta}}$,
$C_1$ is a constant independent of
$K_1$, $K_{2,\beta}$, $\nu$, $v_m$, $p'$ and $T$.
\end{lem}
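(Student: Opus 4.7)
I would follow the pattern of Lemma \ref{l5.1} for the difference of the two solutions. The key structural simplification afforded by system (\ref{e29}) is that the forward equation has no $v$-dependent drift, so $X^t_s(x)=x+\sqrt{2\nu}(B_s-B_t)$ is the \emph{same} process for both $v_1$ and $v_2$ and preserves Lebesgue measure (see (\ref{e35a})). Taking expectations in (\ref{e29}) and using $Z^t_s(x)=\nabla g_m(T-s,X^t_s(x))$ (cf.\ \cite{PP}), I would obtain
\begin{equation*}
g_m(T-t,x)=\e[u_{0,m}(X^t_T(x))]-\int_t^T\e\big[(v_m\cdot\nabla g_m)(T-s,X^t_s(x))\big]ds+\int_t^T\e\big[F_{v_m}(T-s,X^t_s(x))\big]ds,
\end{equation*}
subtract the two identities and decompose the convection difference as $v_1\nabla g_1-v_2\nabla g_2=(v_1-v_2)\nabla g_1+v_2\nabla(g_1-g_2)$.

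The $L^p$ estimate of $g_1-g_2$ is then direct: volume preservation reduces spatial integrations against $X^t_s$ to plain $L^p$ norms; the Sobolev embedding $W^{1,p}\hookrightarrow L^\infty$ (valid since $p>d$) controls $\|v_1-v_2\|_{L^\infty}$ and $\|v_2\|_{L^\infty}$; and the first inequality of Lemma \ref{l3} gives $\|F_{v_1}-F_{v_2}\|_{L^p}\le CK_2(T-s)\|v_1-v_2\|_{W^{1,p}}$, so H\"older in $s$ yields $\int_0^T K_2(s)ds\le K_{2,\beta}T^{(\beta-1)/\beta}$, producing the advertised time exponent. The $\nabla(g_1-g_2)$ estimate is more delicate: if one differentiates the convection integrand in $x$ directly, a term involving $\nabla^2(g_1-g_2)$ appears, which we do not control. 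I would circumvent this by applying the Bismut--Elworthy--Li formula (Lemma \ref{l5.0}) to $\e\big[(v_1\nabla g_1-v_2\nabla g_2)(T-s,X^t_s(x))\big]$, trading the space derivative for the weight $(\sqrt{2\nu}(s-t))^{-1}(B_s-B_t)$. Combined with H\"older in the Brownian moment (as in (\ref{e36aa})), this gives a bound of order $\frac{CK_1}{\sqrt{\nu(s-t)}}\big(\|v_1-v_2\|_{W^{1,p}}+\|g_1-g_2\|_{W^{1,p}}\big)$ depending only on first-order Sobolev norms of $g_1-g_2$; the singular kernel integrates as $\int_t^T(s-t)^{-1/2}ds=2\sqrt{T-t}$. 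The $\nabla F$ contribution is handled without BEL, via the first estimate of Lemma \ref{l3} followed by H\"older in time as above.

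Collecting these pieces and using $T\le 1$ together with $\beta<2$ to absorb both $T$ and $\sqrt{T}$ into $T^{(\beta-1)/\beta}$, and the crude bound $\nu^{-1/2}\le 1+\nu^{-1}$, I would arrive at an inequality of the shape
\begin{equation*}
\sup_{t\in[0,T]}\|g_1-g_2\|_{W^{1,p}}\le C\|u_{0,1}-u_{0,2}\|_{W^{1,p}}+C(1+K_1^2+K_{2,\beta}^2)T^{\frac{\beta-1}{\beta}}(1+\nu^{-1})\big(\sup\|v_1-v_2\|_{W^{1,p}}+\sup\|g_1-g_2\|_{W^{1,p}}\big).
\end{equation*}
The main obstacle is then to choose $T_0\in(0,1]$ small enough (depending on $K_1$, $K_{2,\beta}$ and $\nu$) so that the coefficient of $\sup\|g_1-g_2\|_{W^{1,p}}$ on the right is at most $1/2$; absorbing this term to the left yields (\ref{e38a}). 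The quadratic factor $K_1^2+K_{2,\beta}^2$ appears because the a priori bound on $\|g_1\|_{W^{1,p}}$ coming from Lemma \ref{l5.1} already involves the product $K_1K_{2,\beta}$, which picks up another factor $K_1$ when multiplied against the $L^\infty$ norm of $v_1-v_2$ in the decomposition of the convection difference.
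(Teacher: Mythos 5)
Your proposal is correct and follows essentially the same route as the paper's proof: exploiting that the forward process $X^t_s(x)=x+\sqrt{2\nu}(B_s-B_t)$ is common to both solutions, applying the Bismut--Elworthy--Li formula to the convection term to avoid second derivatives of $g_1-g_2$, invoking Lemma \ref{l3} and H\"older in time for the $F_v$ difference, and closing via absorption after choosing $T_0$ small depending on $K_1$, $K_{2,\beta}$, $\nu$. Your decomposition $(v_1-v_2)\nabla g_1+v_2\nabla(g_1-g_2)$ and your account of where the factor $K_1^2+K_{2,\beta}^2$ originates both match the paper's estimates (\ref{e37a})--(\ref{e37aa}).
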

\begin{proof}
Note that for different $v_1,v_2$, the forward equation in (\ref{e29}) is the same, and
for every $f_1,f_2 \in L^p(\R^d)$  we have $||f_1(X_s^t(\cdot))-f_2(X_s^t(\cdot))||_{L^p}
=||f_1-f_2||_{L^p}$. As in (\ref{e30}), we define,
\begin{equation*}
\nabla g_m(T-t,x):=I^t_{0,m}(x)+\sum_{i=1}^2\int_t^T I_{r,i,m}^t(x)dr,\ \ m=1,2.
\end{equation*}
It is clear that
\begin{equation*}
||I^t_{0,1}(\cdot)-I^t_{0,2}(\cdot)||_{L^p}\le C||u_{0,1}-u_{0,2}||_{W^{1,p}}.
\end{equation*}
By (\ref{e45}),
\begin{equation*}
I_{r,1,m}^t(x)=-\frac{1}{\sqrt{2\nu}(r-t)}\e\Big(\big((v_m \nabla g_m)(T-r,X_r^t(x))\big)\big(B_r-B_t\big)\Big),
\end{equation*}
and using H\"older inequality as in (\ref{e36aa}), it is not difficult to show that
\begin{equation*}
\begin{split}
&||I_{r,1,1}^t(\cdot)-I_{r,1,2}^t(\cdot)||_{L^p}
\le \frac{C}{\nu^{\frac{1}{2}}(r-t)^{\frac{1}{2}}}\big(||v_1(T-r)-v_2(T-r)||_{L^{\infty}}
||g_1(T-r)||_{W^{1,p}}\\
&+||g_1(T-r)-g_2(T-r)||_{W^{1,p}}||v_2(T-r)||_{L^{\infty}}\big).
\end{split}
\end{equation*}
By Lemma \ref{l3},
\begin{equation*}
\begin{split}
&||I_{r,2,1}^t(\cdot)-I_{r,2,2}^t(\cdot)||_{L^p}
\le C\sup_{m=1,2}||v_m(T-r)||_{W^{2,p}}||v_1(T-r)-v_2(T-r)||_{W^{1,p}}
\end{split}
\end{equation*}
Noticing that $||v_m(t)||_{L^{\infty}}\le C||v_m(t)||_{W^{1,p}}$, combining all the estimates together
and using (\ref{e24}).
Hence there is some some $T_1>0$, if $0<T\le T_1$, then
\begin{equation}\label{e37a}
\begin{split}
&\sup_{t \in [0,T]}||\nabla g_1(t)-\nabla g_2(t)||_{L^p}\le
C||u_{0,1}-u_{0,2}||_{W^{1,p}}+CT^{\frac{1}{2}}\nu^{-\frac{1}{2}}\Big(
K_1\sup_{t \in [0,T]}||g_1(t)-g_2(t)||_{W^{1,p}}\\
&+\sup_{t \in [0,T]}||v_1(t)-v_2(t)||_{W^{1,p}}\sup_{t \in [0,T]}||g_1(t)||_{W^{1,p}}\Big)
+CT^{\frac{\beta-1}{\beta}}K_{2,\beta}\sup_{t \in [0,T]}||v_1(t)-v_2(t)||_{W^{1,p}}\\
&\le C||u_{0,1}-u_{0,2}||_{W^{1,p}}+CK_1T^{\frac{1}{2}}\nu^{-\frac{1}{2}}
\sup_{t \in [0,T]}||g_1(t)-g_2(t)||_{W^{1,p}}\\
&+
C(1+K_1^2+K_{2,\beta}^2)T^{\frac{\beta-1}{\beta}}(1+\nu^{-1})
\sup_{t \in [0,T]}||v_1(t)-v_2(t)||_{W^{1,p}}.
\end{split}
\end{equation}
Note that
\begin{equation*}
g_m(T-t,x)=\e\big(u_{0,m}(X_T^t(x))\big)-\int_t^T \e\big((v_m\nabla g_m)(T-r,X_r^t(x))\big)dr
+\int_t^T \e\big(F_{v_m}(T-r,X_r^t(x))\big)dr,
\end{equation*}
and, by   the same procedure as above, for every $0<T<T_1$,
\begin{equation}\label{e37aa}
\begin{split}
&\sup_{t \in [0,T]}||g_1(t)-g_2(t)||_{L^p}\le
C||u_{0,1}-u_{0,2}||_{L^p}+CTK_1\sup_{t \in [0,T]}||g_1(t)-g_2(t)||_{W^{1,p}}\\
&+C(1+K_1^2+K_{2,\beta}^2)T^{\frac{\beta-1}{\beta}}(1+\nu^{-1})
\sup_{t \in [0,T]}||v_1(t)-v_2(t)||_{W^{1,p}}.
\end{split}
\end{equation}
Based on (\ref{e37a}), (\ref{e37aa}), if we take a $T_0$ which only depends on
$K_1$, $K_{2,\beta}$, $\nu$, such that,
\begin{equation*}
CK_1T_0^{\frac{1}{2}}\nu^{-\frac{1}{2}}\le \frac{1}{4},\ \
CK_1T_0\le \frac{1}{4},
\end{equation*}
then the conclusion (\ref{e38a}) holds.
\end{proof}


For every $v \in \S(p,q,p',T)$ with
  some $d<p<\infty$, $1\le q \le\infty$, $0<T<1$, $1<p'<\frac{d}{2}$, $0<\alpha<1$, $\max(1,2-\alpha)<\beta<2$,
we can
define a map $\I'_{\nu}: \S(p,q,p',T) \rightarrow C([0,T];B_{p,q}^{1+\alpha}(\R^d;\R^d))$
$\bigcap L^{\beta}([0,T];B_{p,q}^{2+\alpha}(\R^d;\R^d)) $ by
$\I'_{\nu}(v)(t):=\mathbf{P}(Y_{T-t}^{T-t}(\cdot ))$, where $Y_s^t$ is the solution of (\ref{e29}) with coefficients
$v$ and initial condition $u_0=v(0)$, and $\mathbf{P}$ is the Leray-Hodge projection operator.

Analogously to Proposition \ref{p1} and Theorem \ref{t1}, we can show the following
results about the extension of $\I'_{\nu}$ and its fixed point.
\begin{thm}\label{t5.1}
Let $0<\alpha<1$, $d<p<\infty$, $1\le q <\infty$, $\max(1,2-\alpha)<\beta<2$. Suppose
$u_0 \in B_{p,q}^{1+\alpha}(\R^d;\R^d)$
satisfies $\nabla \cdot u_0=0$.
Then there exist $K_0>0$ and $0<T_0\le 1$
which only depend on $||u_0||_{B_{p,q}^{1+\alpha}}$,
$\nu$, such that
$\I'_{\nu}$ can be extended to be a map
$\I'_{\nu}: \B(u_0,T_0,p,q,\alpha,\beta,K_0)$ $\rightarrow \B(u_0,T_0,p,q,\alpha,\beta,K_0)$, where
\begin{equation}\label{e43aa}
\begin{split}
&\B(u_0,T_0,p,q,\alpha,\beta, K_0):=\Big\{v \in C([0,T_0];B_{p,q}^{1+\alpha}(\R^d;\R^d))
\bigcap L^{\beta}([0,T_0];B_{p,q}^{2+\alpha}(\R^d;\R^d));\\
&\ \ v(0,x)=u_0(x),\ \ ||v||_{1,T_0}\le K_0,\ \ ||v||_{2,T_0,\beta}\le K_0,\ \ \nabla \cdot v(t)=0,\ \forall t \in [0,T] \Big\},
\end{split}
\end{equation}
where $||v||_{1,T_0}:=\sup_{t \in [0,T_0]}||v(t)||_{B_{p,q}^{1+\alpha}}$ and
$||v||_{2,T_0,\beta}:=\big(\int_0^{T_0}||v(t)||_{B_{p,q}^{2+\alpha}}^{\beta}dt\big)^{\frac{1}{\beta}}$.
Moreover,
there exists a
constant $0<T_1<T_0$, which only depends on $||u_0||_{B_{p,q}^{1+\alpha}}$ and
the viscosity constant $\nu$, such that there is a unique fixed
point $u$ for the map $\I'_{\nu}$ in $\B(u_0,T_1,p,q,\alpha,\beta,K_0)$.
\end{thm}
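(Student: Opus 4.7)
The plan is to follow the pattern of Proposition \ref{p1} and Theorem \ref{t1}, substituting Lemmas \ref{l5.1} and \ref{l5.1a} for Lemmas \ref{l8} and \ref{l12}, but with the added complication that the target space carries \emph{two} norms (one pointwise-in-$t$, one integrated), so the extension and contraction arguments have to respect both.

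First I would extend $\I'_{\nu}$ to $\B(u_0,T_0,p,q,\alpha,\beta,K_0)$. Given $v$ in this set, density of $C_c^{\infty}(\R^d;\R^d)$ in $B_{p,q}^{1+\alpha}$ (and in $B_{p,q}^{2+\alpha}$ modulo the endpoint $q=\infty$) provides smooth approximants $\tilde v_n\in C([0,T_0];C_c^{\infty}(\R^d;\R^d))$; setting $v_n:=\mathbf{P}\tilde v_n$ makes them divergence free and, since $\mathbf{P}$ is bounded on each $B_{p,q}^{s}$, one gets convergence $v_n\to v$ in $C([0,T_0];B_{p,q}^{1+\alpha})\cap L^\beta([0,T_0];B_{p,q}^{2+\alpha})$ with uniformly bounded norms. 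Applying Lemma \ref{l5.1a} to $v_n,v_m$ (with $u_{0,n}=v_n(0)\to u_0$) shows that $\{\I'_\nu(v_n)\}$ is Cauchy in $C([0,T_0];W^{1,p}(\R^d;\R^d))$; the limit $\hat v$ is divergence free and satisfies $\hat v(0)=u_0$. Lemma \ref{l5.1} gives the uniform bounds $\sup_n\|\I'_\nu(v_n)\|_{1,T_0}<\infty$ and $\sup_n\|\I'_\nu(v_n)\|_{2,T_0,\beta}<\infty$, so a Fatou-type lower-semicontinuity argument exactly analogous to Step 2 of the proof of Proposition \ref{p1} puts $\hat v$ in $C([0,T_0];B_{p,q}^{1+\alpha})\cap L^\beta([0,T_0];B_{p,q}^{2+\alpha})$ and preserves the bounds; independence of the choice of approximating sequence (hence well-definedness of $\I'_\nu(v):=\hat v$) follows by applying Lemma \ref{l5.1a} to two different approximations. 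Time-continuity in the stronger norm $B_{p,q}^{1+\alpha}$ is obtained as in Steps 3--6 of Proposition \ref{p1}, but is considerably easier here because the forward equation is just a translation by Brownian motion, so $X_s^{t'}-X_s^t$ is deterministic in $x$ and no derivative-of-flow analysis is needed.

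Next I would choose $K_0$ and $T_0$ so that $\I'_\nu$ preserves the ball. Assume $\|v\|_{1,T_0}\le K_0$ and $\|v\|_{2,T_0,\beta}\le K_0$. The two estimates in (\ref{e24}) read
\begin{equation*}
\|\I'_\nu(v)\|_{1,T_0}\le C_1\|u_0\|_{B_{p,q}^{1+\alpha}}+C_1 K_0^2 T_0^{\frac{\beta-1}{\beta}},
\end{equation*}
\begin{equation*}
\|\I'_\nu(v)\|_{2,T_0,\beta}\le C_1\|u_0\|_{B_{p,q}^{1+\alpha}}(\nu^{-1/2}+T_0^{1/2})+2C_1 K_0^2 T_0^{\frac{2\alpha+3\beta-4}{2\beta}}.
\end{equation*}
Since $\beta>\max(1,2-\alpha)$, both exponents of $T_0$ are strictly positive. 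Fixing $K_0:=4C_1\|u_0\|_{B_{p,q}^{1+\alpha}}(1+\nu^{-1/2})$ and then choosing $T_0\in(0,1]$ (depending only on $\|u_0\|_{B_{p,q}^{1+\alpha}}$ and $\nu$) small enough that $C_1 K_0 T_0^{(\beta-1)/\beta}\le 1/2$ and $2C_1 K_0 T_0^{(2\alpha+3\beta-4)/(2\beta)}\le 1/2$ ensures $\I'_\nu:\B(u_0,T_0,p,q,\alpha,\beta,K_0)\to \B(u_0,T_0,p,q,\alpha,\beta,K_0)$; one must also retain the smallness conditions on $T_0$ imposed in Lemmas \ref{l5.1} and \ref{l5.1a} for the estimates to apply.

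For the fixed point, I would use contraction in the weaker norm $\|\cdot\|_{W^{1,p},T_1}:=\sup_{t\in[0,T_1]}\|\cdot(t)\|_{W^{1,p}}$. Applied with $u_{0,1}=u_{0,2}=u_0$, Lemma \ref{l5.1a} gives
\begin{equation*}
\|\I'_\nu(v_1)-\I'_\nu(v_2)\|_{W^{1,p},T_1}\le C_1(1+2K_0^2)(1+\nu^{-1})T_1^{\frac{\beta-1}{\beta}}\|v_1-v_2\|_{W^{1,p},T_1},
\end{equation*}
so shrinking $T_1\le T_0$ (still depending only on $\|u_0\|_{B_{p,q}^{1+\alpha}}$ and $\nu$) gives a contraction factor $\le 1/2$. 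Starting Picard iteration from $u_1(t)\equiv u_0$, one obtains a Cauchy sequence in $C([0,T_1];W^{1,p})$ with a strong limit $u$; the uniform bounds $\|u_n\|_{1,T_1}\le K_0$, $\|u_n\|_{2,T_1,\beta}\le K_0$ are inherited by $u$ through the same Fatou-type argument used in the extension step, so $u\in\B(u_0,T_1,p,q,\alpha,\beta,K_0)$. Continuity of $\I'_\nu$ in the $W^{1,p}$ norm then gives $\I'_\nu(u)=u$, and uniqueness follows from the same contraction inequality.

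The main obstacle, and the one requiring the greatest care, is the passage to the limit: the contraction is in the weaker norm $W^{1,p}$, while membership in the target space $\B(u_0,T_1,p,q,\alpha,\beta,K_0)$ demands both a $C_t B_{p,q}^{1+\alpha}$ bound and an $L^\beta_t B_{p,q}^{2+\alpha}$ bound. Controlling the latter requires the integrated-in-time structure and the sharp singularity $(r-t)^{-1/2}$ from the Bismut--Elworthy--Li formula in Lemma \ref{l5.1}, which is why a direct analogue of the time-continuity argument of Proposition \ref{p1} does not give $u\in C([0,T_1];B_{p,q}^{2+\alpha})$---one should only ask (and only needs) $L^\beta$ regularity at the top-order scale.
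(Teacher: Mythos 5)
Your proposal is correct and follows essentially the same route as the paper: extend $\I'_{\nu}$ by smooth approximation using the uniform bounds of Lemma \ref{l5.1} and the $W^{1,p}$ Cauchy property from Lemma \ref{l5.1a}, recover membership in $C([0,T_0];B_{p,q}^{1+\alpha})\cap L^{\beta}([0,T_0];B_{p,q}^{2+\alpha})$ by the Fatou-type argument of Proposition \ref{p1}, and then run the Picard iteration of Theorem \ref{t1} with the contraction taken in the weaker $W^{1,p}$ norm. Your explicit choice of $K_0$ and $T_0$ and your closing remark on why only $L^{\beta}$ regularity can be asked at the top-order scale are consistent with (and slightly more detailed than) the paper's argument.
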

\begin{proof}
Using the same procedure of the proof of Proposition \ref{p1}, for every $T>0$, $K>0$, and
$v\in \B(u_0,T,p,q,\alpha,\beta,K)$, there exists a sequence $\{v_n\}_{n=1}^{\infty}
\subseteq \S(p,q,p',T)$ for some $1<p'<\frac{d}{2}$, such that
\begin{equation}\label{e36a}
\begin{split}
&\lim_{n \rightarrow \infty}\sup_{t \in [0,T]}||v_n(t)-v(t)||_{W^{1,p}}=0,\\
&\sup_n \sup_{t \in [0,T]}||v_n(t)||_{B_{p,q}^{1+\alpha}}<\infty,\ \ \
\sup_n \int_0^T ||v_n(t)||_{B_{p,q}^{2+\alpha}}^{\beta}dt<\infty.
\end{split}
\end{equation}
By (\ref{e36a}) and Lemma \ref{l5.1}, we can find $K_0>>||u_0||_{B_{p,q}^{1+\alpha}}$, $0<T_0\le 1$
which only depend on $||u_0||_{B_{p,q}^{1+\alpha}}$ and $\nu$, such that for every
$v\in \B(u_0,T_0,p,q,\alpha,\beta,K_0)$,
\begin{equation}\label{e38aa}
\sup_n ||\I'_{\nu}(v_n)||_{1,T_0}\le K_0,\ \ \
\sup_n ||\I'_{\nu}(v_n)||_{2,T_0,\beta}\le K_0.
\end{equation}
According to Lemma \ref{l5.1a},
we know that $\{I(v_n)\}_{n=1}^{\infty}$ is a Cauchy sequence
in $C([0,T_0];W^{1,p}$ $(\R^d;$ $\R^d))$, so it has a limit $\tilde v \in C([0,T_0];W^{1,p}(\R^d;$ $\R^d))$.
In particular, such limit $\tilde v$ is independent of the choice of sequence $\{v_n\}$, we define
$\I'_{\nu}(v):=\tilde v$.

By (\ref{e38aa}),
as the same procedure in the proof of Proposition \ref{p1}, we can show $\tilde v \in C([0,T_0];B_{p,q}^{1+\alpha}(\R^d;\R^d))$
 $\bigcap L^{\beta}([0,T_0];B_{p,q}^{2+\alpha}(\R^d;\R^d))$. In particular, in order to prove the associated estimate
 (\ref{e28aa}) for $B_{p,q}^{\alpha}$ norm, $C_c^{\infty}(\R^d)$ need to be dense in
 $B_{p,q}^{\alpha}(\R^d)$, so the case $q=\infty$ can not be included.

Based on (\ref{e36a}), replacing $B_{p,p}^{r'}$ norm in (\ref{e23a}) by $W^{1,p}$ norm, and repeating the proof
of Theorem \ref{t1}, we can show that there is a constant $0<T_1<T_0$ which only depends on
$||u_0||_{B_{p,q}^{1+\alpha}}$ and $\nu$, such that
there exists a unique fixed point for $\I'_{\nu}$ in $\B(u_0,T_1,p,q,\alpha,\beta,K_0)$.
\end{proof}

\begin{rem}
In contrast with Theorem \ref{t1}, the local existence time $T_0$ for the fixed point
$\I'_{\nu}$ depends on the viscosity constant $\nu$, which is due to the dependence
of $\nu$ in the estimate (\ref{e24}).
\end{rem}

Repeating the proof of Theorem \ref{t2}, we can also show that the fixed point
$u$ is a solution of (\ref{e1}).

\begin{thm}\label{t5.2}
Let  $0<\alpha<1$, $d<p<\infty$, $1\le q <\infty$, $\max(1,2-\alpha)<\beta<2$.
Suppose that  $u_0 \in  B_{p,q}^{1+\alpha}(\R^d;\R^d)$ which satisfies $\nabla \cdot u_0=0$.
Then
we can find a constant $T_0>0$ which only depends
$||u_0||_{B_{p,q}^{1+\alpha}}$ and $\nu$, such that
there exists a vector field
$u \in C([0,T_0];B_{p,q}^{1+\alpha}(\R^d;\R^d))$
$\bigcap L^{\beta}([0,T_0];B_{p,q}^{2+\alpha}(\R^d;\R^d))$
which is the unique strong solution of (\ref{e1}) in the space
$u \in C([0,T_0];B_{p,q}^{1+\alpha}(\R^d;\R^d))$
$\bigcap L^{\beta}([0,T_0];B_{p,q}^{2+\alpha}(\R^d;\R^d))$.



\end{thm}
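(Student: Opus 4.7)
The plan is to mirror the three-step argument in the proof of Theorem \ref{t2}, adapting each step to the new functional setting $C([0,T];B_{p,q}^{1+\alpha})\cap L^{\beta}([0,T];B_{p,q}^{2+\alpha})$ and to the modified Lagrangian forward equation $dX_s^t=\sqrt{2\nu}\,dB_s$. By Theorem \ref{t5.1}, a fixed point $u$ of $\I'_\nu$ exists in $\B(u_0,T_1,p,q,\alpha,\beta,K_0)$; the task is to identify this $u$ with the unique strong solution of (\ref{e1}).

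For existence, I would first approximate $u_0$ by a sequence $u_{0,n}\in\bigcap_{l>1}B_{p,q}^{l}(\R^d;\R^d)$ of divergence-free vector fields converging to $u_0$ in $B_{p,q}^{1+\alpha}$, and then, for each $n$, run the Picard iteration of Theorem \ref{t5.1} starting from $u_{0,n}$: it produces sequences $u_{n,m}\in\S(p,q,p',T_1)$ with $u_{n,m+1}=\I'_\nu(u_{n,m})$ converging in $C([0,T_1];W^{1,p})$ to a fixed point $u_n$ with uniform bounds in $\B(u_0,T_1,p,q,\alpha,\beta,K_0)$. Since each $u_{n,m}$ is smooth, It\^o's formula applied to $g_{n,m}(T-s,X_s^t(x)):=Y_{s,n,m}^t$ together with the BSDE theory of \cite{PP} shows that $g_{n,m}$ is a classical solution of the linear parabolic equation
\begin{equation*}
\partial_t g_{n,m}+u_{n,m}\cdot\nabla g_{n,m}=\nu\Delta g_{n,m}+F_{u_{n,m}},\qquad g_{n,m}(0)=u_{0,n},
\end{equation*}
hence a mild solution. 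Passing to the limit $m\to\infty$ via the convergence (\ref{e38a}) and the uniform bounds (\ref{e38aa}) yields the analogous mild formulation for $g_n$ with coefficient $u_n$.

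The critical step is to prove $\nabla\cdot g_n=0$, which will give $g_n=\mathbf{P}g_n=\I'_\nu(u_n)=u_n$. Setting $h_n:=\nabla\cdot g_n$ and using $\nabla\cdot F_v=G_v$ whenever $\nabla\cdot v=0$, taking the divergence of the PDE above yields
\begin{equation*}
\partial_t h_n+u_n\cdot\nabla h_n=\nu\Delta h_n-H_{u_n,g_n},\qquad h_n(0)=0,
\end{equation*}
with $H_{u_n,g_n}:=\sum_{i,j}\partial_i u_n^j\partial_j(g_n^i-u_n^i)$. Since $u_n=\mathbf{P}g_n$ and $v-\mathbf{P}v=\nabla N(\nabla\cdot v)$, elliptic regularity gives $\|\nabla(u_n-g_n)\|_{L^p}\le C\|h_n\|_{L^p}$, hence $\|H_{u_n,g_n}\|_{L^p}\le CK\|h_n\|_{L^p}$ where $K$ uniformly bounds $\|\nabla u_n\|_{L^\infty}$. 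A probabilistic representation for $h_n$ along $X_s^t=x+\sqrt{2\nu}(B_s-B_t)$ then produces a Gronwall-type inequality forcing $h_n\equiv 0$. With $g_n=u_n$ identified, passing $n\to\infty$ using Lemma \ref{l5.1a} yields the mild Navier--Stokes equation for the limit $u$.

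For uniqueness in $C([0,T_0];B_{p,q}^{1+\alpha})\cap L^{\beta}([0,T_0];B_{p,q}^{2+\alpha})$, I would run the argument in reverse: given any such strong solution $\tilde u$, the forward-backward system (\ref{e29}) with coefficient $\tilde u$ admits a unique solution, and both $\tilde u$ and the associated $\tilde g$ solve the same linear parabolic equation with initial datum $u_0$; linear-PDE uniqueness in this class forces $\tilde g=\tilde u$, whence $\tilde u=\I'_\nu(\tilde u)$, and the uniqueness clause of Theorem \ref{t5.1} concludes. The main obstacle is controlling the composition and product terms appearing in the passage to the limit $m\to\infty$ only under $L^{\beta}$-in-time integrability at the $B_{p,q}^{2+\alpha}$ level---substantially weaker than the $L^{\infty}$-in-time control available in Section 3---so that all convergence arguments must be routed through the time-weighted quantitative bounds of Lemmas \ref{l5.1} and \ref{l5.1a}, rather than the uniform bounds used in the proof of Theorem \ref{t2}.
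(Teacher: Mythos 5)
Your proposal is correct and follows essentially the same route as the paper, whose entire proof of this theorem is the instruction to repeat the argument of Theorem \ref{t2}; you have faithfully unpacked that repetition (approximation of $u_0$, Picard iterates solving the linear parabolic equation, the divergence argument via $h_n=\nabla\cdot g_n$ and $\|H_{u_n,g_n}\|_{L^p}\le CK\|h_n\|_{L^p}$, passage to the limit, and uniqueness by showing any strong solution is a fixed point of $\I'_{\nu}$), and you correctly identify the genuine new difficulty, namely that $\|\nabla u_n(t)\|_{L^{\infty}}$ is now controlled only through the interpolation (\ref{e32aa}) with $L^{\beta}$-in-time integrability, so everything must go through Lemmas \ref{l5.1} and \ref{l5.1a}. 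The only cosmetic caveat is that the Feynman--Kac representation of $h_n$ is most directly obtained along the drifted diffusion $dX=\sqrt{2\nu}\,dB-u_n\,ds$ exactly as in Theorem \ref{t2} (rather than along the pure Brownian path, which would require handling the transport term $u_n\cdot\nabla h_n$ via Bismut--Elworthy--Li); either variant closes the Gronwall argument.
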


\begin{rem}
Note that in Theorem \ref{t5.2} the uniqueness of solution
needs to hold in a subspace of  $C([0,T];B_{p,q}^{1+\alpha}(\R^d;\R^d))$,
i.e.,  $C([0,T];B_{p,q}^{1+\alpha}(\R^d;\R^d))$
$\bigcap L^{\beta}([0,T];B_{p,q}^{2+\alpha}(\R^d;\R^d))$, since
we have to control the norm $||\nabla v(t)||_{L^{\infty}}$
in the iteration procedure.

Also note that for $p>1$, $r>1+\frac{d}{p}$, $||\nabla v(t)||_{L^{\infty}}
\le C||v(t)||_{B_{p,q}^r}$, in the similar way as above we can show the
following local existence of a unique solution of (\ref{e1}) in lower and
higher order Besov space.

Suppose  $1<p<\infty$, $1\le q <\infty$, $r>\max(1,\frac{d}{p})$,
$u_0 \in  B_{p,q}^{r}(\R^d;\R^d)$ satisfying that $\nabla \cdot u_0=0$.
Then
we can find a constant $T_0>0$ which only depends
$||u_0||_{B_{p,q}^{r}}$ and $\nu$, such that
there exists a vector field
$u \in C([0,T_0];B_{p,q}^{r}(\R^d;\R^d))$
$\bigcap L^{\beta}([0,T_0];B_{p,q}^{r+1}(\R^d;\R^d))$ for some
$1<\beta<2$,
which is the unique strong solution for (\ref{e1}) in the space
$u \in C([0,T_0];B_{p,q}^{r}(\R^d;\R^d))$
$\bigcap L^{\beta}([0,T_0];B_{p,q}^{r+1}(\R^d;\R^d))$.
Moreover, if $r>1+\frac{d}{p}$, the local unique existence of strong
solution for (\ref{e1}) holds for $u \in C([0,T_0];B_{p,q}^{r}(\R^d;\R^d))$.
\end{rem}

\begin{rem}
Tracking the proof of Proposition \ref{p1}, we only need the condition that
$C_c^{\infty}(\R^d)$ is dense in $B_{p,q}^{\alpha}(\R^d)$ to show $\I'_{\nu}(v)$
is continuous with the time parameter under $B_{p,q}^{1+\alpha}$ norm. Hence if we consider
the $B_{p,\infty}^r$ norm for $p>1$, $r>\max(1,\frac{d}{p})$,
the local existence result can be derived with the function space
$C([0,T_0];B_{p,\infty}^{r}(\R^d;\R^d))$ $\bigcap L^{\beta}([0,T_0];B_{p,q}^{r+1}(\R^d;\R^d))$
replaced by $L^{\infty}([0,T_0];B_{p,\infty}^{r}(\R^d;\R^d))$
$\bigcap C([0,T_0];B_{p,\infty}^{s}(\R^d;\R^d))$ $\bigcap L^{\beta}([0,T_0];B_{p,\infty}^{r+1}(\R^d;\R^d))$
with any $0<s<r$ and some $1<\beta<2$.
\end{rem}

\vskip 5mm

\bf Acknowledgments:
\rm

The first two authors aknowledge financial support from the FCT project

PTDC/MAT/104173/2008.

\end{document}